\newcommand\be{\begin{equation}}
\newcommand\ee{\end{equation}}
\newcommand\bea{\begin{eqnarray}}
\newcommand\eea{\end{eqnarray}}
\newcommand\bi{\begin{itemize}}
\newcommand\ei{\end{itemize}}
\newcommand\ben{\begin{enumerate}}
\newcommand\een{\end{enumerate}}
\newtheorem{thm}{Theorem}[section]
\newtheorem{cor}[thm]{Corollary}
\newtheorem{lem}[thm]{Lemma}
\newtheorem{defi}[thm]{Definition}
\newtheorem{rek}[thm]{Remark}
\newcommand{\twocase}[5]{#1 \begin{cases} #2 & \text{#3}\\ #4
&\text{#5} \end{cases}   }
\newcommand{\threecase}[7]{#1 \begin{cases} #2 &
\text{#3}\\ #4 &\text{#5}\\ #6 &\text{#7} \end{cases}   }
\newcommand{\E}{\mathbb{E}}
\newcommand{\mattwo}[4]
{\left(\begin{array}{cc}
                        #1  & #2   \\
                        #3 &  #4
                          \end{array}\right) }
\newcommand{\intii}{\int_{-\infty}^\infty}
\newcommand{\ncr}[2]{\binom{#1}{#2}}
\newcommand{\ccr}[2]{{\rm Cr}_{#1, #2}}
\numberwithin{equation}{section}
\begin{document}

\title{Distribution of Eigenvalues of Weighted, Structured Matrix Ensembles}

\author{Olivia Beckwith}
\email{obeckwith@gmail.com}
\address{Department of Mathematics, Harvey Mudd College, Claremont, CA 91711}

\author{Victor Luo}
\email{vdl1@williams.edu}
\address{Department of Mathematics and Statistics, Williams College, Williamstown, MA 01267}

\author{Steven J. Miller}
\email{sjm1@williams.edu, Steven.Miller.MC.96@aya.yale.edu} \address{Department of Mathematics and Statistics, Williams College, Williamstown, MA 01267}

\author{Karen Shen}\email{shenk@stanford.edu}
\address{Department of Mathematics, Stanford University, Stanford, CA 94305}

\author{Nicholas Triantafillou}
\email{ngtriant@umich.edu}
\address{Department of Mathematics, University of Michigan, Ann Arbor, MI 48109}

\subjclass[2010]{15B52, 60F05, 11D45 (primary), 60F15, 60G57, 62E20 (secondary)}

\keywords{limiting rescaled spectral measure, circulant and Toeplitz matrices, random matrix theory, method of moments, Diophantine equations}

\date{\today}

\thanks{We thank Colin Adams, Arup Bose, Satyan Devadoss, Allison Henrich, Murat Kolo$\check{{\rm g}}$lu and Gene Kopp for helpful conversations, and the referee of an earlier draft of the paper for several useful comments. The first and third named authors were supported by NSF grant DMS0850577 and Williams College; the second, fourth and fifth named authors were partially supported by NSF grant DMS0970067.}

\begin{abstract}
The study of the limiting distribution of eigenvalues of $N \times N$ random matrices as $N \rightarrow \infty$ has many applications, including nuclear physics, number theory and network theory. One of the most studied ensembles is that of real symmetric matrices with independent entries drawn from identically distributed nice random variables, where the limiting rescaled spectral measure is the semi-circle. Studies have also determined the limiting rescaled spectral measures for many structured ensembles, such as Toeplitz and circulant matrices. These systems have very different behavior; the limiting rescaled spectral measures for both have unbounded support. Given a structured ensemble such that (i) each random variable occurs $o(N)$ times in each row of matrices in the ensemble and (ii) the limiting rescaled spectral measure $\widetilde{\mu}$ exists, we introduce a parameter to continuously interpolate between these two behaviors. We fix a $p \in [1/2, 1]$ and study the ensemble of signed structured matrices by multiplying the $(i,j)^{\text{th}}$ and $(j,i)^{\text{th}}$ entries of a matrix by a randomly chosen $\epsilon_{ij} \in \{1, -1\}$, with ${\rm Prob}(\epsilon_{ij} = 1)  = p$ (i.e., the Hadamard product). For $p = 1/2$ we prove that the limiting signed rescaled spectral measure is the semi-circle. For all other $p$, we prove the limiting measure has bounded (resp., unbounded) support if $\widetilde{\mu}$ has bounded (resp., unbounded) support, and converges to $\widetilde{\mu}$ as $p \to 1$. Notably, these results hold for Toeplitz and circulant matrix ensembles.

The proofs are by Markov's Method of Moments. The analysis of the $2k^{\text{th}}$ moment for such distributions involves the pairings of $2k$ vertices on a circle. The contribution of each pairing in the signed case is weighted by a factor depending on $p$ and the number of vertices involved in at least one crossing. These numbers are of interest in their own right, appearing in problems in combinatorics and knot theory. The number of configurations with no vertices involved in a crossing is well-studied, and are the Catalan numbers. We discover and prove similar formulas for configurations with $4, 6, 8$ and $10$ vertices in at least one crossing. We derive a closed-form expression for the expected value and determine the asymptotics for the variance for the number of vertices in at least one crossing. As the variance converges to 4, these results allow us to deduce properties of the limiting measure.
\end{abstract}

\maketitle

\tableofcontents




\section{Introduction}

\subsection{Background}

Though Random Matrix Theory began with statistics investigations by Wishart \cite{Wis}, it was through the work of Wigner \cite{Wig1,Wig2,Wig3,Wig4,Wig5}, Dyson \cite{Dy1,Dy2} and others that its true power and universality became apparent. Wigner's great insight was that ensembles of matrices with randomly chosen entries model well many nuclear phenomena. For example, in quantum mechanics the fundamental equation is $H \Psi_n = E_n \Psi_n$ ($H$ is the Hamiltonian, $\Psi_n$ the energy eigenstate with eigenvalue $E_n$). Though $H$ is too complicated to diagonalize, a typical $H$ behaves similarly to the average behavior of the ensemble of matrices where each independent entry is chosen independently from some fixed probability distribution. Depending on the physical system, the matrix $H$ is constrained. The most common $H$ is real-symmetric (where the limiting rescaled spectral measure is the semi-circle) or Hermitian. In addition to physics, these matrix ensembles successfully model diverse fields from number theory \cite{ILS,KS1,KS2,KeSn,Mon,RS} to random graphs \cite{JMRR,MNS} to bus routes in Mexico \cite{BBDS,KrSe}.

The original ensembles studied had independent entries chosen from a fixed probability distribution with mean 0, variance 1 and finite higher moments. For such ensembles, the limiting rescaled spectral measure could often be computed, though only recently (see \cite{ERSY,ESY,TV1,TV2}) was the limiting spacing measure between normalized eigenvalues determined for general distributions. See \cite{Fo,Meh} for a general introduction to Random Matrix Theory, and \cite{Dy3,FM,Hay} for a partial history.

Recently there has been much interest in studying highly structured sub-ensembles of the family of real symmetric matrices, where new limiting behavior emerges. Examples include band matrices, circulant matrices, random abelian $G$-circulant matrices, adjacency matrices associated to $d$-regular graphs, and Hankel and Toeplitz matrices, among others \cite{BasBo2,BasBo1,BanBo,BCG,BHS1,BHS2,BM,BDJ,GKMN,HM,JMP,Kar,KKMSX,LW,MMS,McK,Me,Sch}. Two particularly interesting cases are the Toeplitz \cite{BDJ,HM} and singly palindromic Toeplitz ensemble \cite{MMS}, which we now generalize (though our arguments would follow through with only minor changes for other structured ensembles). A real symmetric Toeplitz matrix is constant along its diagonals, while its palindromic variant has the additional property that its first row is a palindrome. The limiting rescaled spectral measures of these ensembles have been proven to exist; it is the Gaussian in the singly palindromic case, and almost a Gaussian in the Toeplitz case (the limiting  rescaled spectral measure has unbounded support, though the moments grow significantly slower than the Gaussian's).

As these matrices are small sub-families of the family of all real symmetric matrices, it is not surprising that new behavior is seen. A natural question to ask is whether or not there is a way to `fatten' these ensembles and regain the behavior of the full real symmetric ensemble. This is similar to what happens for the adjacency matrices of $d$-regular graphs. For fixed $d$ the limiting rescaled spectral measure is Kesten's measure \cite{McK}, which converges as $d\to\infty$ to the semi-circle (see \cite{GKMN} for the related problem of the limiting rescaled spectral measure of weighted $d$-regular graphs). We can ask similar questions about band matrices, and again see a transition in behavior as a parameter grows \cite{Sch}.

Before stating our results, we first quickly review some standard notation (see for example \cite{HM,JMP,KKMSX,MMS}).

\begin{itemize}

\item Random matrix ensemble: In this paper a random matrix ensemble is a collection of $N \times N$ (with $N \to\infty$) real symmetric matrices whose independent entries are drawn from identically distributed random variables whose density $\mathfrak{p}$ has mean 0, variance 1 and finite higher moments. We often study structured ensembles, where there are additional relations beyond the requirement of  being real symmetric. The probability measure attached to the $N\times N$ matrices in the ensemble is \be {\rm Prob}(A) dA \ = \ \prod_{(i,j) \in \mathcal{I}_N} \mathfrak{p}(a_{ij}) da_{ij}, \ee where $\mathcal{I}_N$ is a complete set of indices corresponding to the independent entries of our $N\times N$ matrices. For example, for real symmetric Toeplitz matrices the only dependency condition is that $a_{ij} = a_{k\ell}$ if $|i-j| = |k-\ell|$, and we may thus take $\mathcal{I}_N = \{a_{11}, a_{12}, \dots, a_{N1}\}$.

\item Empirical spectral measure: Given an $N\times N$ real symmetric matrix $A$, its empirical spectral measure is \be \mu_A(x) \ = \ \frac1{N} \sum_{k=1}^N \delta(x - \lambda_k(A)), \ee with $\delta(x)$ the Dirac delta functional and the $\lambda_k(A)$'s are the eigenvalues of $A$.

\item Rescaled empirical spectral measure: The rescaled empirical spectral measure of $A$, denoted $\widetilde{\mu}_A(x)$, is \be \widetilde{\mu}_A(x) \ = \ \frac1{N} \sum_{k=1}^N \delta\left(x - \frac{\lambda_k(A)}{\mathfrak{c} N^r}\right); \ee notice \be \widetilde{\mu}_A(x) \ = \ \mu_{A/\mathfrak{c}N^r}(x).\ee Typically the $A$'s are chosen from a random matrix ensemble, and we have one pair $(\mathfrak{c}, r)$ for all the $A$'s. In this paper usually $r = 1/2$ (this is a consequence of the eigenvalue trace lemma and the central limit theorem) as our random matrix ensembles are full (i.e., each entry is drawn from a random variable with mean 0 and variance 1). The situation would be drastically different if we considered matrices where many entries are forced to be zero, such as the adjacency matrices associated to $d$-regular graphs (where $r=0$ as the eigenvalues do not grow with $N$) or band matrices where the band width is small relative to $N$.

\item Hadamard products: Given real symmetric $N \times N$ matrices $A = (a_{ij})$ and $B = (b_{ij})$, their Hadamard product, denoted $A \circ B$, is the matrix whose $(i,j)$\textsuperscript{th} entry is $a_{ij} b_{ij}$; its empirical spectral measure is $\mu_{A \circ B}(x)$.

\item Limiting spectral measure: If the limit of the sequence of average moments of a random matrix ensemble, \be \lim_{N\to\infty} \intii \cdots \intii x^k \widetilde{\mu}_A(x) {\rm Prob}(A) dA \ \ \ (k\ {\rm a\ positive\ integer}), \ee exists and uniquely determines a measure, that measure is called the limiting spectral measure of the ensemble.

\item Limiting signed rescaled spectral measure: Let $p \in [1/2, 1]$ and consider the random matrix ensemble of real symmetric matrices $\mathcal{E} = (\epsilon_{ij})$ with the independent entries independent identically distributed random variables that are 1 with probability $p$ and -1 with probability $1-p$; we call this the \emph{signed} or \emph{weighted} ensemble. Given a random matrix ensemble with matrices $A$, consider the signed random matrix ensemble with matrices $A \circ \mathcal{E}$. The ensemble has measure \be \left(\prod_{i \le j} p^{(1+\epsilon_{ij})/2} (1-p)^{(1-\epsilon_{ij})/2}\right)  {\rm Prod}(A) dA. \ee We rescale the eigenvalues of the Hadamard product by the same factor we used for the unsigned matrices; thus  \be \widetilde{\mu}_{A \circ \mathcal{E}}(x) \ = \ \mu_{(A/\mathfrak{c}N^r) \circ \mathcal{E}}(x).\ee The average $k$\textsuperscript{th} moment is \be \intii \cdots \intii \prod_{1 \le i \le j \le N} \sum_{\epsilon_{ij} \in \{-1,1\}} \int_{x=-\infty}^\infty x^k \widetilde{\mu}_{A\circ \mathcal{E}}(x) p^{(1+\epsilon_{ij})/2} (1-p)^{(1-\epsilon_{ij})/2} {\rm Prod}(A) dx\ dA.\ee

\end{itemize}

\ \\

The key to our analysis is the Eigenvalue Trace Lemma, which implies that the $k$\textsuperscript{th} moment of $\widetilde{\mu}_A$ is \be M_{k;N}(A) \ = \ \intii x^k \widetilde{\mu}_A(x) dx \ = \ \frac{{\rm Trace}(A^k)}{\mathfrak{c}^k N^{rk+1}}. \ee The advantage of this formulation is that we convert what we want to study (the eigenvalues) to something we understand (the matrix entries, which are randomly chosen). We now integrate the above over the family, reducing the computation to averaging polynomials of the matrix elements over the family. Determining the answer frequently involves solving difficult combinatorial problems to count the number of configurations with a given contribution, with the structure of the ensemble determining the combinatorics.

We concentrate on the family of highly palindromic real symmetric Toeplitz matrices, introduced in \cite{JMP} and defined below, for several reasons. This is a well-studied family, with certain special cases corresponding to some of the more important classical ensembles. Further, the structure of these matrices is conducive to obtaining tractable closed form expressions for many of the quantities. It is straightforward to generalize these results to other structured ensembles whose limiting rescaled spectral measure exists, and we sketch the proof.

\begin{defi}
For fixed $n$, a \textbf{(degree $n$) $N\times N$ highly palindromic real symmetric Toeplitz matrix} is one in which the first row is $2^n$ copies of a palindrome, where the entries are iidrv whose density $\mathfrak{p}$ has mean 0, variance 1 and finite higher moments; for brevity we often omit ``real symmetric'' below. We always assume $N$ to be a multiple of $2^n$ so that each element occurs exactly $2^{n+1}$ times in the first row. If $n=0$ we say it is a \textbf{singly palindromic Toeplitz matrix}. If $a_{ij}$ is the entry in the $i$\textsuperscript{th} row and $j$\textsuperscript{th} column of $A$, then we set $b_{|i-j|} = a_{ij}$ (if the ensemble is at least doubly palindromic, then the $b$'s are not distinct and satisfy additional relations due to the palindromicity). For example, a doubly palindromic Toeplitz matrix is of the form \be\label{eq:defrsptmat} A_N\ =\ \left(
\begin{array}{cccccccccc}
b_0  &  b_1  &  \cdots  & b_1 & b_0 & b_0 & b_1 & \cdots  & b_1   & b_0  \\
b_1  &  b_0  &  \cdots  & b_2 & b_1 & b_0 & b_0 & \cdots  & b_2   & b_1  \\
b_2  &  b_1  &  \cdots  & b_3 & b_2 & b_1 & b_0 & \cdots  & b_3   & b_2  \\
\vdots  &  \vdots  &  \ddots  & \vdots & \vdots & \vdots & \vdots & \ddots  & \vdots   & \vdots  \\
b_2  &  b_3  &  \cdots  & b_0 & b_1 & b_2 & b_3 & \cdots  & b_1   & b_2  \\
b_1  &  b_2  &  \cdots  & b_0 & b_0 & b_1 & b_2 & \cdots  & b_0   & b_1  \\
b_0  &  b_1  &  \cdots  & b_1 & b_0 & b_0 & b_1 & \cdots  & b_1   & b_0  \\
\end{array}\right). \nonumber \ee

\noindent The entries of the matrices are constant along diagonals. Furthermore, entries on two diagonals that are $N/2^n$ diagonals apart from each other are also equal. Finally, entries on two diagonals symmetric within a palindrome are also equal.
\end{defi}

We prove our results on the limiting behavior (averaged over the ensemble) via Markov's Method of Moments (see for example \cite{Bi,Ta}) by showing that the average moments over the ensemble converge to the moments of a nice distribution. This, plus some control over the variance and the rate of convergence (done through a counting argument and an appeal to Chebyshev's inequality and the Borel-Cantelli lemma) suffice to prove various types of convergence of the limiting rescaled spectral measure to a fixed distribution. These convergence arguments are standard; see for example \cite{HM}.

\subsection{Results}

We fix a $p \in [1/2, 1]$ and study ensembles of signed structured matrices formed by multiplying the $(i,j)^{\text{th}}$ and $(j,i)^{\text{th}}$ entries of a matrix in our structured ensemble by a randomly chosen $\epsilon_{ij} \in \{1, -1\}$, with ${\rm Prob}(\epsilon_{ij} = 1)  = p$. As we vary $p$, we continuously interpolate between highly structured (when $p=1$) and less structured (when $p=1/2$) ensembles. As described above, our weighting is equivalent to taking the Hadamard matrix product of our original matrix and a real-symmetric sign matrix $(\epsilon_{ij})$. See \cite{GKMN} for results on Hadamard products of weight matrices and the adjacency matrices associated to $d$-regular graphs.

Unfortunately, in general it is very hard to obtain closed-form expressions for the limiting rescaled spectral measures (exceptions are the Gaussian behavior in singly palindromic Toeplitz and related behavior in block circulant ensembles \cite{MMS,KKMSX}, and Kesten's measure for $d$-regular graphs \cite{McK}); however, we are still able to prove many results about the moments of our signed, structured ensembles. For example, consider the Toeplitz ensembles. Using the expansion from the Eigenvalue Trace Lemma, a degree of freedom argument shows that the elements in the trace expansions must be matched in pairs; the difficulty is figuring out the contribution of each (which greatly depends on the structure of the matrix). The odd moments trivially vanish, and for even moments, the only contribution in the limit comes from when the indices are matched in pairs with opposite orientation. We show that we may view these terms as pairings of $2k$ vertices, $\left(i_{1}, i_{2}\right), \left(i_{2}, i_{3}\right), \ldots, \left(i_{2k}, i_{1}\right)$, on a circle.

We concentrate below on Toeplitz and related ensembles both for ease of presentation and because we can obtain more closed form results in some of these cases than is possible in general (and these results are related to questions in knot theory, which we discuss below), though our techniques apply to more general structured ensembles and we say a few words about these. Our main result is to show that the depression of the contribution of each pairing $c$ in the unsigned case for Toeplitz and singly palindromic Toeplitz matrices depends only on $e\left(c\right)$, where $e\left(c\right)$ is the number of vertices in crossing pairs in the pairing (we define these terms in \S\ref{sec:determiningmoments}). This extends previous results. When $p =1/2$, we are reduced almost completely to the real symmetric case, which means the limiting rescaled spectral measure is the semi-circle distribution (allowing special dependencies between matrix elements); our result also implies that all crossing configurations contribute $0$, and all non-crossing configurations contribute $1$. This gives us a $2k^{\text{th}}$ moment equal to the $k^{\text{th}}$ Catalan number, which is both the number of non-crossing pairings of $2k$ objects and the $2k^{\text{th}}$ moment of the semi-circle density.\footnote{The normalized semi-circular density is $f_{\rm sc}(x) = \frac{1}{\pi} \sqrt{1 - \left(\frac{x}{2}\right)^2}$ if $|x| \le 2$ and 0 otherwise, and the even moments are the Catalan numbers.}  By contrast, when $p = 1$ we are reduced to the unsigned case, and indeed our theorem implies that each configuration contributes what it did in the unsigned case. In addition, any distribution that had unbounded or bounded supported before weighting still has unbounded or bounded, respectively, support after weighting.

Our main result is the following.


\begin{thm}\label{thm:existencelimspecmeas}  Consider any ensemble of $N\times N$ real-symmetric structured matrices, where the independent entries are drawn from a distribution $\mathfrak{p}$ with mean 0, variance 1 and finite higher moments. We assume the following about our random matrix ensemble.

\begin{enumerate} 

\item As $N\to\infty$ the associated rescaled empirical spectral measures converge to a measure, which we call the limiting rescaled spectral measure of the structured ensemble and denote by $\widetilde{\mu}$.
    
\item Each of the independent random variables occurs $o(N)$ times\footnote{Little-oh notation: $f(x) = o(g(x))$ if $\lim_{x\to\infty} f(x)/g(x) = 0$; in particular, this means $f(x)$ grows significantly more slowly than $g(x)$.} in each row of the matrices for this ensemble.

\end{enumerate}

Fix a $p \in [1/2, 1]$ and consider the Hadamard product of our ensemble and real symmetric signed matrices $(\epsilon_{ij})$ (so $\epsilon_{ij} = \epsilon_{ji}$), where the entries are independently chosen from $\{-1,1\}$ with ${\rm Prob}(\epsilon_{ij} = 1)  = p$. We call this new ensemble the signed, structured ensemble.

For $p = 1/2$, the limiting rescaled spectral measures for these signed, structured ensembles are the semi-circle. For all other $p$, the limiting signed rescaled spectral measure has bounded (resp. unbounded) support if the original ensemble's limiting rescaled spectral measure has bounded (resp. unbounded) support, and the convergence is almost surely if additionally the density $\mathfrak{p}$ is even.
\end{thm}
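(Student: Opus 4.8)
The plan is to apply Markov's Method of Moments: show the averaged moments of the rescaled signed spectral measures converge, identify the limits, and upgrade to the stated modes of convergence. By the Eigenvalue Trace Lemma the $k$th averaged moment of the signed ensemble is
\be
  M_{k;N}\ :=\ \frac{1}{\mathfrak c^{k}N^{k/2+1}}\sum_{1\le i_1,\dots,i_k\le N}
  \E\big[a_{i_1 i_2}\cdots a_{i_k i_1}\big]\,\E\big[\epsilon_{i_1 i_2}\cdots\epsilon_{i_k i_1}\big],
\ee
the expectation over the entries and the expectation over the signs factoring by independence. First I would dispose of odd $k$: as the entries have mean $0$, a nonzero $\mathfrak p$-expectation forces every structural equivalence class among the $k$ edges $(i_\ell,i_{\ell+1})$ to have size at least $2$, so there are at most $\lfloor k/2\rfloor$ classes and hence at most $\lfloor k/2\rfloor+1$ free indices among the $i_\ell$; dividing by $N^{k/2+1}$ this is $O(N^{-1/2})\to0$ for $k$ odd (the signs, bounded by $1$, play no role here).

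For $k=2k'$ even, the same degrees-of-freedom bookkeeping together with hypothesis (2) shows that the only configurations surviving as $N\to\infty$ are those in which the $2k'$ edges are matched in pairs by the structure relation with exactly $k'+1$ free indices; as in \S\ref{sec:determiningmoments} these are indexed by the pairings $c$ of the $2k'$ ``vertices'' $(i_1,i_2),\dots,(i_{2k'},i_1)$ arranged on a circle. Let $a_c\ge0$ denote the limiting contribution of $c$ to the \emph{unsigned} ($p=1$) moment; by hypothesis (1), $\sum_c a_c=M^{\widetilde\mu}_{2k'}$ is the $2k'$th moment of $\widetilde\mu$, which we take to be moment-determinate. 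The structural facts I would invoke --- proved in \S\ref{sec:determiningmoments} for Toeplitz and palindromic Toeplitz, and extending to any ensemble satisfying (1)--(2) --- are the following. First, each non-crossing pairing has $a_c=1$. Second, in every surviving configuration the $k'$ matched pairs of edges split into those that the structure relation forces onto a common unordered index pair, which together account for the $2k'-e(c)$ non-crossing vertices and each contribute $\E[\epsilon^2]=1$, and those carrying two distinct independent signs, which account for the $e(c)$ vertices lying in at least one crossing, each such pair contributing $\E[\epsilon\,\epsilon']=(2p-1)^2$; here $e(c)$ depends only on $c$. Multiplying these factors against the (unchanged) count of index assignments, the signed contribution of $c$ is $a_c(2p-1)^{e(c)}$, whence
\be\label{eq:momformula}
  M^{(p)}_{2k'}\ :=\ \lim_{N\to\infty}M_{2k';N}\ =\ \sum_{c}a_c\,(2p-1)^{e(c)},\qquad M^{(p)}_{2k'-1}=0,
\ee
with $e(c)=0$ for the $C_{k'}$ non-crossing pairings and $e(c)\in\{4,6,\dots\}$, $e(c)\le2k'$, for the others. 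I expect this second structural fact --- that the sign-depression of a pairing depends on $c$ only through $e(c)$, at the needed level of generality --- to be the main obstacle; it is the combinatorial core of the paper.

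Granting \eqref{eq:momformula}, the conclusions follow by soft arguments. For $p=1/2$ we have $(2p-1)^{e(c)}=0^{e(c)}$, which equals $1$ when $e(c)=0$ and $0$ otherwise, so $M^{(1/2)}_{2k'}=C_{k'}$, the $k'$th Catalan number, which is the $2k'$th moment of the semi-circle; since the semi-circle is determined by its (geometrically bounded) moments, Markov's Method of Moments gives convergence of the averaged measures to the semi-circle. For $p\in(1/2,1)$ (and $p=1$ recovers the original ensemble), from $0\le2p-1\le1$, $0\le e(c)\le2k'$, and $\sum_c a_c=M^{\widetilde\mu}_{2k'}$ we obtain
\be
  (2p-1)^{2k'}\,M^{\widetilde\mu}_{2k'}\ \le\ M^{(p)}_{2k'}\ \le\ M^{\widetilde\mu}_{2k'}.
\ee
The upper bound shows $\{M^{(p)}_{2k'}\}$ inherits Carleman's condition from $\{M^{\widetilde\mu}_{2k'}\}$, so there is a unique limiting signed measure $\widetilde\mu^{(p)}$ and the method of moments applies; taking $2k'$th roots and $\limsup$'s and using that $2p-1$ is a fixed positive constant, $\limsup_{k'}(M^{(p)}_{2k'})^{1/2k'}$ is finite if and only if $\limsup_{k'}(M^{\widetilde\mu}_{2k'})^{1/2k'}$ is. As this $\limsup$ is the edge of the (symmetric) support of a measure determined by its moments, $\widetilde\mu^{(p)}$ has bounded support if and only if $\widetilde\mu$ does, and in the bounded case its support lies in the convex hull of $\mathrm{supp}\,\widetilde\mu$. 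Finally, for each fixed $k'$ the finite sum $\sum_c a_c(2p-1)^{e(c)}$ is continuous in $p$ and tends to $\sum_c a_c=M^{\widetilde\mu}_{2k'}$ as $p\to1$, so $\widetilde\mu^{(p)}\Rightarrow\widetilde\mu$.

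For almost sure convergence when $\mathfrak p$ is even I would bound $\mathrm{Var}(M_{2k';N})=\E[M_{2k';N}^2]-\E[M_{2k';N}]^2=O_{k'}(N^{-2})$ by expanding $\mathrm{Trace}((A\circ\mathcal E)^{2k'})^2$ over ordered pairs of closed loops of length $2k'$: the terms whose two loops share no structural class cancel against $\E[M_{2k';N}]^2$, and a degrees-of-freedom count (again using (2)) bounds the remaining ``connected'' terms by $O(N^{-2})$, with evenness of $\mathfrak p$ used to kill the odd-order $\mathfrak p$-moments that would otherwise appear and to keep the count tight. Chebyshev's inequality then makes $\sum_N\mathrm{Prob}\big(|M_{2k';N}-\E[M_{2k';N}]|>\varepsilon\big)$ finite, and Borel--Cantelli together with a diagonalization over a countable moment-determining family gives almost sure weak convergence of the rescaled signed spectral measures to $\widetilde\mu^{(p)}$ (and, for $p=1/2$, to the semi-circle). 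These last steps are standard; see \cite{HM}.
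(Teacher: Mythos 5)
Your overall architecture --- trace expansion, reduction to pairings, two-sided moment bounds, moment determinacy, and Chebyshev plus Borel--Cantelli for the almost sure statement --- is the same as the paper's, and the soft deductions at the end are fine. The genuine gap is precisely the step you flagged and then assumed: at the level of generality of the theorem, the exact depression formula $a_c(2p-1)^{e(c)}$ is \emph{false}, as is the claim that every non-crossing pairing has $a_c=1$. Theorem \ref{thm:weightedcontributions} is proved only for the Toeplitz and singly palindromic Toeplitz ensembles, and Lemma \ref{SixthMomentDifferent} exhibits a counterexample inside the scope of the theorem: the doubly palindromic Toeplitz ensemble satisfies hypotheses (1) and (2), yet its sixth-moment contributions are not determined by $e(c)$ alone --- part of a non-crossing configuration's contribution is depressed by $(2p-1)^6$ rather than $(2p-1)^4$ --- and its non-crossing configurations have unsigned contribution $x>1$ (cf.\ Lemma \ref{lem:E(c)=0}). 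With your two structural claims, your $p=1/2$ moment would be $\sum_{c\,:\,e(c)=0} a_c$, which exceeds $C_{k'}$ whenever some non-crossing $a_c>1$, so the semicircle conclusion does not follow from your formula as stated.

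What the paper actually uses are one-sided bounds (Lemma \ref{lem:E(c)=0} and Remark \ref{rek:keyforcontributions}): every crossing configuration is depressed to at most $(2p-1)^2 x(c)$, every non-crossing one to at most $(2p-1)^2\left(x(c)-1\right)+1$, and no term is depressed by more than a factor $(2p-1)^{2k}$. At $p=1/2$ this kills all crossing contributions \emph{and} the excess $x(c)-1$ of the non-crossing ones; the residual contribution of exactly $1$ per non-crossing configuration is then obtained by invoking hypothesis (2) to show that in $N^n - o(N^n)$ of the index choices each random variable occurs exactly twice, reducing the count to the real symmetric case and hence to the Catalan numbers. For $p>1/2$, the two-sided bound $(2p-1)^{2k}M^{\widetilde\mu}_{2k}\le M^{(p)}_{2k}\le M^{\widetilde\mu}_{2k}$ that you want is derived directly --- surviving terms have all entries matched in pairs, so the $b$-expectation is $1$ while the $\epsilon$-expectation lies in $[(2p-1)^{2k},1]$ --- rather than via the exact formula; once you substitute these inequalities for your equation, your support and convergence arguments go through unchanged.
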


\begin{rek}\label{rek:blocks} It is imperative that each independent random variable occurs at most $o(N)$ times; if one occurred order $N$ times degenerate behavior could happen. This precludes some highly structured matrices ensembles, such as those of the form \tiny $\mattwo{\alpha \mathcal{O}_N}{\beta \mathcal{O}_N}{\beta \mathcal{O}_N}{\gamma \mathcal{O}_N}$ \normalsize (with $\mathcal{O}_N$ the $N/2 \times N/2$ matrix all of whose entries are 1), where the limiting rescaled spectral measure is essentially a delta spike at the origin. Another interesting ensemble is the ``right angle'' family, where $a_{ij} = b_{\min(i,j)}$ (so there are $N$ independent random variables): \be \left(
\begin{array}{cccc}
b_1     & b_1       & b_1 & \cdots  \\
b_1     & b_2       & b_2 & \cdots \\
b_1     & b_2       & b_3 & \cdots \\
\vdots  & \vdots    & \vdots & \ddots \\
 \end{array}
\right). \ee
Notice both of these families have rows with order $N$ copies of the same random variable, and they have different behavior in their limiting spectral measures.
\end{rek}

\begin{cor}\label{cor:toepcase}
Theorem \ref{thm:existencelimspecmeas} holds for real-symmetric Toeplitz and singly palindromic Toeplitz matrices. More is true; see Theorem \ref{thm:weightedcontributions} for an explicit, closed form expression for the depression of the moments of these ensembles as $p \to 1/2$.
\end{cor}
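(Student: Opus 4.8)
The plan is to prove the corollary by checking that both hypotheses of Theorem \ref{thm:existencelimspecmeas} are satisfied by each of the two named ensembles; once that is done, the three conclusions (semi-circle at $p=1/2$, preservation of support type for $p\in(1/2,1]$, and almost-sure convergence when $\mathfrak{p}$ is even) follow immediately from the theorem with no further work. Hypothesis (1), the existence of the limiting rescaled spectral measure $\widetilde\mu$, I would not reprove, since it is already established in the literature and quoted earlier in this paper: for real-symmetric Toeplitz matrices the limiting rescaled spectral measure exists and has \emph{unbounded} support (the ``almost Gaussian'' of \cite{BDJ,HM}), and for singly palindromic Toeplitz matrices it exists and equals the Gaussian \cite{MMS}, which likewise has unbounded support. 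I would simply cite these.

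The substantive step is verifying Hypothesis (2), that each independent random variable occurs $o(N)$ times in each row. For a Toeplitz matrix $a_{ij}=b_{|i-j|}$, in a fixed row $i$ the value $b_k$ arises only from columns $j$ with $|i-j|=k$, i.e.\ $j=i\pm k$; hence each $b_k$ occurs at most twice per row, which is $O(1)$ and a fortiori $o(N)$. For the singly palindromic Toeplitz ensemble the first row is a single palindrome, which imposes the extra identification $b_k=b_{N-1-k}$ (up to the indexing convention). A fixed value can then come from $|i-j|\in\{k,\,N-1-k\}$, so at most four columns per row, still $O(1)=o(N)$. I would record these two counts explicitly, taking modest care that the palindromic identification is stated correctly: it halves the number of distinct independent entries (to roughly $N/2$) but identifies each diagonal index with only one other, so no single entry proliferates along a row.

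With both hypotheses in hand, Theorem \ref{thm:existencelimspecmeas} applies verbatim. At $p=1/2$ the limiting signed rescaled spectral measure is the semi-circle; for every other $p\in(1/2,1]$ the support remains unbounded, because both unsigned limiting measures have unbounded support, and the convergence is almost sure since $\mathfrak{p}$ may be taken even. The second sentence of the corollary---the explicit closed-form depression of the moments as $p\to 1/2$---is not part of this verification: it is the content of Theorem \ref{thm:weightedcontributions}, which I would prove there (via the analysis of crossing pairings and the factor $e(c)$) and merely cite here.

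I do not anticipate a genuine obstacle. The only point demanding attention is the precise form of the palindromic relation and keeping the per-row occurrence bound uniform in $N$; since these bounds are in fact $O(1)$, the $o(N)$ hypothesis holds with enormous room to spare, and nothing about the Hadamard weighting interferes with the counting.
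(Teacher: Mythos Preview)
Your approach is correct and matches the paper's treatment: the corollary has no separate proof in the paper because it is immediate once the two hypotheses of Theorem \ref{thm:existencelimspecmeas} are verified, and your verification (citing \cite{BDJ,HM,MMS} for hypothesis (1) and counting at most two or four occurrences per row for hypothesis (2)) is exactly what is implicitly assumed. One small phrasing issue: you write ``the convergence is almost sure since $\mathfrak{p}$ may be taken even,'' but $\mathfrak{p}$ is a given density, not something you choose; the almost-sure convergence in the corollary should remain conditional on $\mathfrak{p}$ being even, just as in the theorem.
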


The controlling factor in the real-symmetric Toeplitz and singly palindromic Toeplitz cases (and in a limited manner the highly palindromic Toeplitz cases) lurking in Corollary \ref{cor:toepcase} is how many vertices are involved in a crossing; we make this precise in \S\ref{sec:determiningmoments}. This reduces our problem to one in combinatorics. Our problem turns out to be related to issues in knot theory as well, which provided additional motivation for and applications of this work; see for example \cite{CM,KT,Kl2,Kont,FN,Rio,Sto}). In the course of our investigations, we prove several interesting combinatorial results (many of the coefficients have been previously tabulated on the OEIS; see for example Remark \ref{eq:kl1coeff}), which we isolate below.

\begin{thm}\label{thm:momentformulas} Consider all $\left(2k-1\right)!!$ pairings of $2k$ vertices on a circle. Let ${\rm Cr}_{2k, 2m}$ denote the number of these pairings where exactly $2m$ vertices are involved in a crossing, and let $C_{k}$ denote the $k^{\rm th}$ Catalan number, $\frac{1}{k+1}\binom{2k}{k}$. For small values of $m$, we obtain the exact formulas for ${\rm Cr}_{2k, 2m}$ listed below; for large $k$ (and thus a large range of possible $m$) we prove the limiting behavior of the expected value and variance of the number of vertices involved in at least one crossing.
\begin{itemize}
\item For $m \le 10$ we have \bea
{\rm Cr}_{2k, 0} & \ = \ & C_{k} \nonumber \\
{\rm Cr}_{2k, 2} & \ = \ & 0 \nonumber \\
{\rm Cr}_{2k, 4} & \ = \ & \binom{2k}{k-2} \nonumber\\
{\rm Cr}_{2k, 6} & \ = \ & 4\binom{2k}{k-3} \nonumber\\
{\rm Cr}_{2k, 8} & \ = \ & 31\binom{2k}{k-4}+\sum_{d=1}^{k-4}\binom{2k}{k-4-d}\left(4+d\right) \nonumber\\
{\rm Cr}_{2k, 10} & \ = \ & 288\binom{2k}{k-5}+8\sum_{d=1}^{k-5}\binom{2k}{k-5-d}\left(5+d\right).
\eea
\item The expected number of vertices involved in a crossing is \be \frac{2k}{2k-1}\left(2k-2 - \frac{{\ }_2F_1(1,3/2,5/2-k;-1)}{2k-3} - (2k-1){\ }_2F_1(1,1/2+k,3/2;-1)\right), \ee which is
\be\label{eq:pcrossaymptotic} 2k-2 - \frac2{k} + O\left(\frac1{k^2}\right)\ee as $k \rightarrow \infty$; here $_2F_1$ is the hypergeometric function. Further, the variance of the number of vertices involved in a crossing converges to $4$.
\end{itemize}

\end{thm}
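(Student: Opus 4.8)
The plan is to reduce everything to a \emph{core--fill decomposition} of pairings together with a few generating-function identities, handling the $\ccr{2k}{2m}$ formulas and the expectation/variance in turn.

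\textbf{The core--fill decomposition.} Given a pairing $c$ of $2k$ vertices on the circle, let $S=S(c)$ be the set of $2m=e(c)$ vertices lying on a crossing chord. One checks at once that $S$ is a union of chords of $c$; that $c|_{S}$ (the \emph{core}) is a pairing of $S$ in which \emph{every} vertex is crossed; and that $c|_{[2k]\setminus S}$ (the \emph{fill}) is non-crossing and crosses no chord of the core. Conversely, any core on a $2m$-subset $S$ together with any non-crossing fill of the complement that avoids the core produces a pairing with $e=2m$. Hence, summing over all cores $c_0$ of size $2m$,
\be \ccr{2k}{2m}\ =\ \sum_{c_{0}}\ \#\{\text{non-crossing fills extending the core }c_{0}\text{ to }2k\text{ vertices}\}.\ee
The core chords cut the disc into faces, and a fill chord must lie inside a single face, so the fill count factors over faces. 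A face meeting the boundary circle in a single arc forces an even number of fill vertices on that arc and contributes a Catalan factor in half that number; a face meeting the circle in $r\ge2$ arcs is more permissive, since a fill chord may then run between two different arcs. For a core all of whose faces meet the circle in at most one arc, the fill count at a placement $S$ is therefore $\prod_{i=1}^{2m}C_{g_i/2}$, where $g_1,\dots,g_{2m}$ are the numbers of fill vertices in the $2m$ gaps between consecutive core vertices; summing this over all $2m$-subsets $S$ (mark one core vertex and use $[x^{n}]C(x)^{j}=\tfrac{j}{2n+j}\binom{2n+j}{n}$, where $C(x)=\sum_{k\ge0}C_kx^{k}$) gives exactly $\binom{2k}{k-m}$, independently of which such core we started from. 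An Euler-characteristic count then shows that a core with disconnected interlacement (crossing) graph must have a multi-arc face, and in the cases we need this is the only way a multi-arc face arises; such a face turns out to sit on two distinguished gaps.

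\textbf{The explicit $\ccr{2k}{2m}$ formulas.} For $2m\in\{0,2\}$ the claim is immediate: $\ccr{2k}{0}=C_k$ counts non-crossing pairings, and there is no core of size $2$ since a lone chord crosses nothing. For $2m=4$ there is one core, with all faces single-arc, so $\ccr{2k}{4}=\binom{2k}{k-2}$; for $2m=6$ the four cores all have single-arc faces, giving $4\binom{2k}{k-3}$. For $2m=8$ and $2m=10$ one enumerates the cores ($31$ and $288$ respectively, organized by interlacement graph), isolates those with disconnected interlacement graph (the $4$ ``two crossing pairs'' cores when $2m=8$; the $40$ cores splitting as a crossing pair together with a size-$6$ core when $2m=10$), and for each such core evaluates the fill count as $[x^{k-m}]$ of a product of single-arc-face generating functions times one factor $\sum_{b,c\ge0}C_{(b+c)/2}\,x^{(b+c)/2}$ for the two-arc face, which constrains only the \emph{sum} $b+c$ of the two arc-counts to be even. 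Carrying out the resulting circular convolutions produces exactly the correction terms $\sum_{d\ge1}\binom{2k}{k-4-d}(4+d)$ and $8\sum_{d\ge1}\binom{2k}{k-5-d}(5+d)$. I expect the main obstacle to be precisely this step: cataloguing the size-$8$ and size-$10$ cores and reading off their face structure correctly (in particular pinning down the two distinguished gaps carrying each multi-arc face); the cases $k=4,5$ give useful consistency checks.

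\textbf{Expectation and variance.} Passing to the complement $N_0(c)=2k-e(c)$, the number of vertices on no crossing chord, rotational symmetry gives $\mathbb{E}[e]=2k-\mathbb{E}[N_0]$ with $\mathbb{E}[N_0]=\tfrac{2k}{(2k-1)!!}\,\#\{c:\text{the chord at vertex }1\text{ crosses nothing}\}$. The chord from vertex $1$ to vertex $2\ell$ crosses nothing exactly when the two arcs it cuts off, of sizes $2\ell-2$ and $2k-2\ell$, are each matched internally, so this count is $\sum_{\ell=1}^{k}(2\ell-3)!!\,(2k-2\ell-1)!!$ (with $(-1)!!=1$). Rewriting the double factorials via $\Gamma$-functions turns this sum into a ${}_2F_1$ evaluated at $-1$, which after simplification is the displayed closed form; expanding the terms with $\ell$ near $1$ and near $k$ (the rest being negligible) yields $\mathbb{E}[N_0]=2+2/k+O(1/k^{2})$, i.e.\ \eqref{eq:pcrossaymptotic}. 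For the variance, write $N_0=\sum_v X_v$ with $X_v$ the indicator that $v$ lies on no crossing chord, so $\mathrm{Var}(e)=\mathrm{Var}(N_0)=2k\,\mathbb{E}[X_1]+2k\sum_{w=2}^{2k}\mathbb{E}[X_1X_w]-(2k\,\mathbb{E}[X_1])^{2}$. I would compute each $\mathbb{E}[X_1X_w]$ by the same ``an arc-cutting chord is uncrossed iff both sides match internally'' principle, splitting on whether $1$ and $w$ are matched to each other and on the nesting of their two chords; this produces further double-factorial sums in $w$ and $k$ whose $k\to\infty$ limits can be read off, with the upshot that the number of uncrossed chords converges in law to a Poisson variable of mean $1$, so that $N_0$ behaves like twice such a variable and $\mathrm{Var}(N_0)\to4$ (only the second-moment estimate is actually needed). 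The covariance sum is the technical heart of this part, but it is a routine if lengthy evaluation once $\mathbb{E}[X_1X_w]$ is written out case by case.
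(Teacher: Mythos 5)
Your proposal is correct in outline but organizes the combinatorics differently from the paper, and it is worth comparing the two routes. For the formulas for $\ccr{2k}{2m}$, the paper never enumerates cores at all: it introduces dividing edges and partitions, proves the key counting result (Theorem \ref{thm:2kCk-v}, that a fixed partial pairing on $2v$ vertices admits exactly $\binom{2k}{k-v}$ non-crossing \emph{non-dividing} completions, via the Catalan convolution identity), writes $\ccr{2k}{2m}=\sum_i {\rm P}_{2k,2m,i}$, and then obtains the seed values $\ccr{4}{4}=1$, $\ccr{6}{6}=4$, $\ccr{8}{8}=31$, $\ccr{10}{10}=288$ \emph{recursively} from $\sum_m \ccr{2k}{2m}=(2k-1)!!$ applied at $2k=2m$ together with the already-established formulas for smaller $m$. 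Your core--fill decomposition (all-crossing core, non-crossing fill confined to faces, single-arc faces giving Catalan factors, the unique two-arc face of a disconnected core absorbing the would-be dividing chords) is equivalent in content --- your $27+4$ and $248+40$ splits, the two-arc face factor $\sum_{b+c\ {\rm even}}C_{(b+c)/2}x^{(b+c)/2}$, and the anchoring device all check out, and a small sanity check at $2k=10$, $2m=8$ reproduces $315=270+45$ --- but it buys generality (a uniform description of which configurations get which fill factor) at the cost of requiring you to enumerate the $31$ and $288$ cores by hand and to carry out the circular convolutions that the paper sidesteps. You could shortcut exactly this step by borrowing the paper's recursion for $\ccr{2m}{2m}$. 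For the mean, your complement formulation is the same computation as the paper's $p_{\rm cross}$ expansion; for the variance, you work with uncrossed-vertex indicators and a Poisson-of-mean-one heuristic, whereas the paper works with crossed-vertex indicators, splitting into $p_a$ (the two chords cross each other; computed exactly as $1/3$) and $p_b$; your version is a legitimate and arguably cleaner second-moment computation, since the dominant contributions come from adjacent matched pairs.

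Two cautions about steps you assert rather than execute. First, the exact mean formula is not a single hypergeometric value: the finite sum $\sum_m \binom{k-2}{m-2}/\binom{2k-3}{2m-3}$ equals a \emph{difference} of two ${}_2F_1(\cdot;-1)$ terms (the tail beyond $m=k$ must be subtracted off, which is where the second term $(2k-1)\,{}_2F_1(1,1/2+k,3/2;-1)$ in the displayed formula comes from; the paper's appendix notes that even a CAS gets this wrong if one is careless), so "turns this sum into a ${}_2F_1$" needs that extra step. Second, the correction terms $\sum_{d\ge1}\binom{2k}{k-4-d}(4+d)$ and $8\sum_{d\ge1}\binom{2k}{k-5-d}(5+d)$ do emerge from your two-arc-face convolutions, but only after the anchoring factor $2k/2m$ and the count of gap splittings $(b,c)$ are handled consistently with the connected-core normalization; this is the genuinely delicate bookkeeping in your route, and it is exactly what the paper's separate formula for ${\rm P}_{2k,2m,2}$ (with its $(m+d)$ placement factor) packages for you.
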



We review the basic framework and definitions used in studying the moments in \S\ref{sec:momentprelims}. In \S\ref{sec:determiningmoments} we determine formulas for the moments, and prove the first part of Theorem \ref{thm:momentformulas} in the Toeplitz case, completing the proof by determining the limiting behavior in \S\ref{sec:limiting behavior} and discussing the minor changes needed for the general case. All that remains to prove Theorem \ref{thm:existencelimspecmeas} is to handle the convergence issues; this analysis is standard, and is quickly reviewed in \S\ref{sec:limitingspectralmeasure}.


\section{Moment Preliminaries}\label{sec:momentprelims}

\noindent \textbf{\emph{Note:} For ease of exposition we consider (real symmetric) Toeplitz ensembles below, though minor modifications yield similar results for other real symmetric structured ensembles where the limiting rescaled spectral measure exists and each random variable occurs $o(N)$ times in each row of matrices in the ensemble. In particular, we take $(\mathfrak{c}, r)$ to be $(1, 1/2)$.}\\ \

We briefly summarize the needed expansions from previous work (see \cite{HM,JMP,KKMSX,MMS} for complete details). We use a standard method to compute the moments. For a fixed $N\times N$ matrix $A$ drawn from a Toeplitz ensemble, the $k$\textsuperscript{th} moment of its rescaled empirical spectral measure is
\be
M_{k,N}\left(A\right)  \ = \  \frac{1}{N^{\frac{k}{2}+1}}\sum_{1 \leq i_{1},\ldots, i_{k} \leq N} a_{i_{1}i_{2}}a_{i_{2}i_{3}}\cdots a_{i_{k}i_{1}},
\ee
which when applied to our signed Toeplitz and palindromic Toeplitz matrices (where the entries of the unsigned ensemble are constant along diagonals) gives that
\be
M_{k,N}\left(A\right) \ = \ \frac{1}{N^{\frac{k}{2}+1}}\sum_{1 \leq i_{1}, \ldots, i_{k} \leq N} \epsilon_{i_{1}i_{2}}b_{\left|i_{1}-i_{2}\right|}\epsilon_{i_{2}i_{3}}b_{\left|i_{2}-i_{3}\right|} \cdots\epsilon_{i_{k}i_{1}}b_{\left|i_{k}-i_{1}\right|}.
\ee
By linearity of expectation,
\be\label{eq:expectedmoment}
\E\left(M_{k,N}\left(A\right)\right) \ = \  \frac{1}{N^{\frac{k}{2}+1}}\sum_{1 \leq i_{1},\ldots, i_{k} \leq N} \E\left( \epsilon_{i_{1}i_{2}}b_{\left|i_{1}-i_{2}\right|}\epsilon_{i_{2}i_{3}}b_{\left|i_{2}-i_{3}\right|}\cdots\epsilon_{i_{k}i_{1}}b_{\left|i_{k}-i_{1}\right|}\right),
\ee and we set \be M_k \ = \ \lim_{N\to\infty} \E\left(M_{k,N}\left(A\right)\right). \ee
Of the $N^{k}$ terms in the above sum corresponding to the $N^{k}$ choices of $\left(i_{1}, \ldots, i_{k}\right)$ in the above sum, we can immediately see that some contribute zero in the limit as $N \rightarrow \infty$ by using the following lemmas.

\begin{lem}\label{lem:pairs}
Let $k$ be an integer and consider any Toeplitz ensemble. The only terms in \eqref{eq:expectedmoment} that can have a non-zero contribution in the limit as $N\to\infty$ to $M_k$ have each $b_{\alpha}$ in the product appearing exactly twice. Further, all such terms have a finite contribution.
\end{lem}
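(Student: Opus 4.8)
The plan is to combine a degree-of-freedom count with the vanishing of odd moments of the mean-zero weights $\epsilon_{ij}$ and the mean-zero entries $b_\alpha$. First I would fix a tuple $(i_1,\dots,i_k)$ and look at the term $\E(\epsilon_{i_1i_2}b_{|i_1-i_2|}\cdots\epsilon_{i_ki_1}b_{|i_k-i_1|})$. Group the $k$ factors according to the value of the ``edge'' $\{i_t,i_{t+1}\}$ (indices mod $k$): two factors are in the same group iff they correspond to the same unordered pair of vertices, hence iff they carry the same $\epsilon$ and, because the entries are constant along diagonals, the same $b_{|i_t-i_{t+1}|}$. Since the $\epsilon_{ij}$ are independent of each other and of the $b_\alpha$'s, and since $\E(\epsilon_{ij})=0$ and $\E(\epsilon_{ij}^2)=1$, the expectation factors over the groups and vanishes unless every edge is traversed an even number of times; in particular, any edge appearing exactly once forces the whole term to be zero. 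So only tuples whose underlying closed walk on the vertex set $\{1,\dots,N\}$ uses each of its edges at least twice can contribute.

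Next I would count such tuples. A closed walk of length $k$ that repeats each edge at least twice uses at most $k/2$ distinct edges, hence (by connectedness of the walk) touches at most $k/2+1$ distinct vertices. Therefore the number of contributing tuples is $O(N^{k/2+1})$, and after dividing by $N^{k/2+1}$ each such term contributes a bounded amount; this already gives the ``finite contribution'' half of the statement, provided the expectations of the $b$-products are uniformly bounded, which holds because $\mathfrak p$ has finite moments of all orders (so $|\E(\prod b)| \le \max_j \E(|b|^k) < \infty$ uniformly). To get the full claim that only terms with each $b_\alpha$ appearing \emph{exactly} twice survive in the limit, I note that if some edge is traversed $\ge 4$ times (or if two distinct repeated edges share structure forcing fewer free indices), then the number of distinct vertices drops to $\le k/2$, so the count of such tuples is $O(N^{k/2})$ and, after the $N^{-(k/2+1)}$ normalization, their total contribution is $O(1/N)\to 0$. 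Hence in the limit the only surviving configurations have exactly $k/2$ distinct edges each used exactly twice — in particular $k$ must be even — and each $b_\alpha$ in the product appears exactly twice.

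The main obstacle is the bookkeeping in the ``$\ge 4$ traversals drops a vertex'' step: one must argue carefully that any closed walk of length $k$ in which the edges are used with multiplicities summing to $k$ but \emph{not} all equal to $2$ spans at most $k/2$ vertices. The cleanest route is to think of the walk as a connected multigraph $G$ on $v$ vertices with $e$ distinct edges and total edge-multiplicity $k$; connectedness gives $e \ge v-1$, and ``each edge used $\ge 2$ times'' gives $k \ge 2e \ge 2(v-1)$, i.e. $v \le k/2+1$, with equality forcing $e = v-1$ (so $G$ is a tree) and every multiplicity exactly $2$. Any deviation from this extremal case strictly lowers $v$, which is exactly the power-of-$N$ saving we need. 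I would also remark that this argument is insensitive to the extra palindromic identifications among the $b_\alpha$'s, since those only merge groups (equivalently, impose further linear conditions on the indices), which can only decrease the number of free vertices and hence cannot create new surviving terms.
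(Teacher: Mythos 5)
Your argument has a genuine flaw at the very first step: you group the $k$ factors by the unordered pair (edge) $\{i_t,i_{t+1}\}$ and assert that two factors carry the same $b$ iff they lie on the same edge. Only one direction is true. The random variable $b_{|i_t-i_{t+1}|}$ is indexed by the \emph{diagonal} $|i_t-i_{t+1}|$, so two factors on completely different edges share the same $b_\alpha$ whenever their differences agree up to sign. Consequently your reduction ``only tuples in which every edge is traversed at least twice can contribute'' is false, and it proves too much: for example the tuple $(i_1,i_2,i_3,i_4)=(5,1,4,8)$ gives differences $4,3,4,3$, so $\E(b_4^2)\E(b_3^2)\E(\epsilon_{51}\epsilon_{14}\epsilon_{48}\epsilon_{85})=(2p-1)^4\neq 0$ for $p>1/2$ even though every edge occurs exactly once; there are order $N^{k/2+1}$ such tuples, so they survive the normalization. (Your appeal to $\E(\epsilon_{ij})=0$ is also wrong: here $\E(\epsilon_{ij})=2p-1$, which vanishes only at $p=1/2$.) If your characterization of the surviving terms (doubled-tree walks) were correct, the limiting measure of the Toeplitz ensemble would be the semicircle, contradicting the known (and here assumed) fact that it is not; you have imported the Wigner-matrix combinatorics into a setting where the independent variables live on diagonals, not edges.

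The repair is to run both halves of the argument on the $b_\alpha$'s themselves. The vanishing step uses only that the $b$'s are mean zero and independent of each other and of the $\epsilon$'s: if some diagonal value $|i_t-i_{t+1}|$ occurs exactly once, the factored expectation contains $\E(b_\alpha)=0$. The counting step must then be done on the system of difference relations: when the $b$'s are matched in pairs one has $i_j-i_{j+1}=\pm(i_l-i_{l+1})$ for each matched pair, so choosing the $k/2$ difference values and a single index $i_l$ determines every remaining index up to at most two choices, giving $O(N^{k/2+1})$ tuples; if some $b_\alpha$ occurs three or more times there are fewer than $k/2+1$ free parameters, and those terms die under the $N^{-(k/2+1)}$ normalization. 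Finiteness then follows, as in your last step, from the finite moments of $\mathfrak{p}$ and of the $\epsilon$'s, since each surviving term contributes $\prod_j\E(\epsilon_{\gamma_j}^{n_j})\prod_j\E(b_{\alpha_j}^{m_j})=O_k(1)$. Your multigraph/spanning-tree bound is the right tool for Wigner ensembles but does not bound, or even address, the dominant Toeplitz terms.
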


\begin{proof}
We first prove that any term that doesn't have every $b_{\alpha}$ appearing at least twice does not contribute. As the expected value of a product of independent variables is the product of the expected values, since each $b_\alpha$ is drawn from a distribution with mean zero, there is no contribution in this case. Thus each $b_\alpha$ occurs at least twice if the term is to contribute.

We now show that any term that has some $b_{\alpha}$ appearing more than twice cannot contribute in the limit. If each $b_{\alpha}$ appears exactly twice, then there are $k/2$ values of $b_{\alpha}$ to choose. Recall (see for example \cite{HM}) that for Toeplitz matrices, $b_{\left|i_{j} i_{j+1}\right|}$ is paired with $b_{\left|i_{k} - i_{k+1}\right|}$ if and only if
\be\label{eq:pmpairing}
i_{j} - i_{j+1} \ = \pm (i_{k} - i_{k+1}).
\ee

Once we have specified the $b$'s and one index $i_{l}$, there are at most two values for each remaining index. Thus there are $O\left(N^{\frac{k}{2}+1}\right)$ terms where the $b_{\alpha}$'s are matched in exactly pairs. By contrast, any term that has some $b_{\alpha}$ appearing more than twice has fewer than $\frac{k}{2}+1$ degrees of freedom, and thus does not contribute in the limit as we divide by $N^{k/2 + 1}$.

Finally, we show that the sum of the contributions from all terms arising from matching in pairs is $O_{k}\left(1\right)$. Suppose there are $r \leq k$ different $\epsilon_{\gamma}$'s and $s \leq k$ different $b_{\alpha}$'s in the product, say $\epsilon_{\gamma_{1}},\ldots,\epsilon_{\gamma_{r}}$ and $b_{\alpha_{1}},\ldots,b_{\alpha_{s}}$, with each $\epsilon_{\gamma_{j}}$ occurring $n_{j}$ times and each $b_{\alpha_{j}}$ occurring $m_{j}$ times.  Such a term contributes $\prod_{j = 1}^{r}\E\left(\epsilon_{\gamma_j}^{n_j}\right) \prod_{j = 1}^{s}\E\left(b_{\alpha_j}^{m_j}\right)$. Since the probability distributions of the $\epsilon$'s and $b$'s have finite moments, this contribution is thus $O_{k}\left(1\right)$, and thus the sum of all such contributions is finite in the limit.
\end{proof}

\begin{rek}
For singly palindromic Toeplitz and highly palindromic Toeplitz matrices, a similar result holds once we identify the appropriate $b_\alpha$. After correcting equations (2.7) and (2.8) of \cite{JMP} to fix an omission and to take $\mathcal{C}_1 \in \{(-\lfloor \frac{|i_l - i_{l+1}|}{N/2^n}\rfloor + k-1)\frac{N}{2^n} : k \in \{1, \ldots, 2^n\}\}$ and $\mathcal{C}_2 \in \{(\lfloor \frac{|i_l - i_{l+1}|}{N/2^n}\rfloor + k)\frac{N}{2^n} - 1 : k \in \{1, \ldots, 2^n\}\}$ into account, we have that $b_{\left|i_{j} i_{j+1}\right|}$ is paired with $b_{\left|i_{k} - i_{k+1}\right|}$ if and only if
\be
i_{j} - i_{j+1} \ = \pm (i_{k} - i_{k+1}) + \mathcal{C}_{r_{jk}}.
\ee
For singly palindromic Toeplitz matrices, it is easy to check that the only possible values are $\mathcal{C}_{r_{jk}}$ equals $\pm (N-1)$ or $0$. Moreover, it is not hard to see that the number of possible values for each $\mathcal{C}_{r_{jk}}$ depends on the moment $m$ being computed and on the level $n$ of palindromicity of the ensemble, but is independent of $N$, a fact which will be crucially important in later proofs.
\end{rek}

\begin{lem}\label{lem:oddmoments} For Toeplitz and (highly) palindromic Toeplitz ensembles, the odd moments of the limiting rescaled spectral measure vanish.
\end{lem}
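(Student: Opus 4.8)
The statement to prove is Lemma~\ref{lem:oddmoments}: for Toeplitz and (highly) palindromic Toeplitz ensembles, the odd moments of the limiting rescaled spectral measure vanish. The approach is a parity/degrees-of-freedom argument, building directly on Lemma~\ref{lem:pairs}. Fix an odd $k$. By Lemma~\ref{lem:pairs}, the only terms in \eqref{eq:expectedmoment} that contribute in the limit are those in which every $b_\alpha$ appearing in the product $b_{|i_1-i_2|}\cdots b_{|i_k-i_1|}$ appears exactly twice. But there are $k$ factors $b_{|i_j-i_{j+1}|}$ (indices mod $k$), and $k$ is odd, so they cannot be partitioned into pairs. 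Hence \emph{no} term has every $b_\alpha$ occurring exactly twice, so by Lemma~\ref{lem:pairs} every term contributes $0$ in the limit, giving $M_k = \lim_{N\to\infty}\E(M_{k,N}(A)) = 0$.

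First I would write the odd moment as in \eqref{eq:expectedmoment}, isolating the product $\prod_{j} \epsilon_{i_j i_{j+1}} b_{|i_j - i_{j+1}|}$ and noting that the $\epsilon$'s and $b$'s are mutually independent, so the expectation factors as a product over distinct $\epsilon$-values times a product over distinct $b$-values. Next I would invoke Lemma~\ref{lem:pairs} to discard all configurations in which some $b_\alpha$ occurs with odd multiplicity (in particular multiplicity one, by the mean-zero hypothesis), and to bound the total contribution of the remaining ``all $b$'s appear exactly twice'' configurations. The combinatorial punchline is the parity obstruction: the $k$ consecutive differences $i_j - i_{j+1}$ sum (cyclically) to zero, but more to the point there are exactly $k$ of them, and an odd number of objects cannot be perfectly matched into pairs; therefore the surviving configuration set from Lemma~\ref{lem:pairs} is empty when $k$ is odd. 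For the (highly) palindromic Toeplitz cases I would note, via the Remark following Lemma~\ref{lem:pairs}, that the pairing criterion is only perturbed by the $N$-independent shift constants $\mathcal{C}_{r_{jk}}$, which does not affect the counting of multiplicities; the same parity argument applies verbatim.

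I do not expect a genuine obstacle here — this is one of the standard ``odd moments vanish'' arguments. The only point requiring a sentence of care is making sure the mean-zero hypothesis on $\mathfrak{p}$ is what kills the odd-multiplicity (especially multiplicity-one) terms, so that ``exactly in pairs'' really is the only surviving regime, and then observing that ``exactly in pairs'' is impossible for an odd number of factors. One should also remark that the $\epsilon_{ij}$'s, which need not have mean zero when $p\neq 1/2$, play no role in this step: their expectations $\E(\epsilon_{\gamma_j}^{n_j})$ are bounded (indeed in $[-1,1]$), so they cannot rescue a configuration that is already zero on account of the $b$'s. Hence $M_k = 0$ for all odd $k$, which is the assertion of the lemma.
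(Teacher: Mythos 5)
Your proposal is correct and follows essentially the same route as the paper: the odd moments die by the parity obstruction that an odd number of $b$-factors cannot be matched exactly in pairs, combined with Lemma \ref{lem:pairs} (mean zero kills multiplicity-one terms, degree-of-freedom counting kills multiplicity $\ge 3$). The only cosmetic difference is that for the (highly) palindromic cases the paper re-runs the multiplicity-one versus multiplicity-$\ge 3$ split directly (citing \cite{JMP}) rather than invoking the remark extending Lemma \ref{lem:pairs}, which is the same argument in substance.
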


\begin{proof}
For the Toeplitz ensemble, this follows directly from Lemma \ref{lem:pairs} (since the odd moments have an odd number of $b$'s, they cannot be matched exactly in pairs). For the singly palindromic and highly palindromic cases, some $b_l$ must appear an odd number of times. If it appears exactly once, it must vanish because the distribution is mean zero, while the number of terms where some $b_l$ appears three or more times is insignificant by a simple degree of freedom argument. (For a more detailed exposition, see \cite{JMP}.)
\end{proof}

Since the odd moments vanish, we concern ourselves in the rest of the paper with the limiting behavior of the even moments, $M_{2k}$. Further, in the moment expansion for the even moments, we only have to consider terms in which the $b_{\alpha}$'s are matched in exactly pairs. With the next lemma, we further reduce the number of terms we must consider by showing that only those terms where every pairing between the $b$'s is with a minus sign in \eqref{eq:pmpairing} contribute in the limit. The following proof is adapted from \cite{HM}.

\begin{lem}\label{lem:negativesonly} For all the Toeplitz ensembles, the only terms that contribute to $M_{2k}$, the $2k\textsuperscript{\rm th}$ moment of the limiting rescaled spectral measure, are terms where the $b$'s are matched in exactly pairs and have a minus sign in each of  the $k$ equations of the form \eqref{eq:pmpairing}.
\end{lem}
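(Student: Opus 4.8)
The plan is to reduce the sum over index tuples $(i_1,\dots,i_{2k})$ with the $b$'s matched in pairs to a change of variables on differences, then show that any matching containing at least one ``plus-sign'' equation of the form \eqref{eq:pmpairing} is forced onto a lower-dimensional affine subspace after accounting for a global constraint, so it contributes $O(N^{-1})$ relative to the main term. Concretely, writing $x_j = i_j - i_{j+1}$ (indices cyclic mod $2k$), the cyclic structure imposes the single linear relation $\sum_{j=1}^{2k} x_j = 0$. A pairing of the $b$'s partitions $\{1,\dots,2k\}$ into $k$ pairs $\{a,b\}$, and for each pair we either have the ``minus'' relation $x_a = -x_b$ or the ``plus'' relation $x_a = x_b$. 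First I would use the minus relations to parametrize: each minus pair contributes one free variable, and when all $k$ pairs are minus the relation $\sum x_j = 0$ is automatically satisfied (each pair cancels), so we genuinely have $k$ free difference-variables, plus one free choice of a single index $i_\ell$ to anchor the tuple, for a total of $N^{k+1}$ terms — matching the normalization $N^{k+1}$ and giving a nonzero contribution.

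Next I would analyze what happens when at least one pair is a ``plus'' pair. Suppose $t \ge 1$ of the pairs use the plus relation $x_a = x_b$ and the remaining $k-t$ use the minus relation. Now the ambient constraint $\sum_{j} x_j = 0$ is no longer automatic: the minus pairs still cancel in the sum, but each plus pair $\{a,b\}$ contributes $2x_a$, so the constraint becomes $2\sum_{\text{plus pairs}} x_a = 0$, i.e. $\sum_{\text{plus pairs}} x_a = 0$, which is a nontrivial linear relation among the $t$ plus-variables (nontrivial precisely because $t\ge 1$ and all coefficients are nonzero). This cuts the number of free difference-variables from $k$ down to $k-1$, so the number of surviving index tuples is $O(N^{k})$, and after dividing by $N^{k+1}$ the contribution is $O(1/N) \to 0$. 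I would also need to dispatch the degenerate sub-cases where a plus relation forces $x_a = 0$ (which can happen if a pair is ``self-crossing'' or glued by palindromicity constants $\mathcal{C}_{r_{jk}}$ in the palindromic variants): such terms lose an additional degree of freedom and are even smaller, so they are harmless. For the palindromic cases one carries the constants $\mathcal{C}_{r_{jk}}$ along; since by the Remark these constants take only finitely many values independent of $N$, the counting of degrees of freedom is unaffected and the same codimension argument applies uniformly, at the cost of a bounded combinatorial multiplicity.

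I expect the main obstacle to be bookkeeping rather than ideas: specifically, making the degree-of-freedom count rigorous when several relations interact — e.g. ensuring that after imposing all the pair relations plus the cyclic constraint, the plus pairs really do drop the dimension by exactly one and not by zero, which requires checking that the linear functional $\sum_{\text{plus pairs}} x_a$ is not already in the span of the relations we have used to parametrize (it is not, since the minus-pair relations only involve minus-pair variables). A secondary subtlety is that in the palindromic ensembles the ``plus/minus'' dichotomy is really ``$\pm x_b + \mathcal{C}$'', so I would phrase the argument in terms of the signed pattern $(\epsilon_1,\dots,\epsilon_k) \in \{+1,-1\}^k$ attached to the $k$ pairs and observe that the obstruction $\sum_{j:\epsilon_j = +1} x_{a_j} \equiv (\text{constant depending only on the }\mathcal{C}\text{'s})$ still has nonempty-support left-hand side whenever some $\epsilon_j = +1$, which is all that the codimension argument needs. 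Having established this, Lemma \ref{lem:negativesonly} follows by summing the $O(N^k)$ bound over the bounded number of patterns containing a plus and the bounded number of $\mathcal{C}$-assignments.
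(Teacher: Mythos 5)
Your proposal is correct and follows essentially the same route as the paper: the paper also writes the cyclic telescoping identity $\sum_j (i_j - i_{j+1}) = 0$, notes that minus-paired differences cancel while any plus-signed pair leaves a nontrivial linear dependence $\sum_j \eta_j(1+\delta_j)x_j = 0$ among the $x_j$'s, and concludes via the degree-of-freedom count from Lemma \ref{lem:pairs} that such terms contribute only $O(N^k)$ and hence vanish after dividing by $N^{k+1}$. Your treatment of the palindromic constants $\mathcal{C}_{r_{jk}}$ (finitely many values independent of $N$, so the codimension argument is unaffected) matches the paper's appeal to \cite{JMP}.
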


\begin{proof} We do the proof for the Toeplitz case, as the other cases are similar. For each term, there are $k$ corresponding equations of the form \eqref{eq:pmpairing}. We let $x_{1}, \ldots, x_{k}$ be the values of the $\left|i_{j}-i_{j+1}\right|$ in these equations, and let $\delta_{1}, \ldots, \delta_{k}$ be the choices of sign in these equations. We further let $\tilde{x}_{1} = i_{1}-i_{2}$, $\tilde{x_{2}} = i_{2} - i_{3}, \ldots, \tilde{x}_{2k} = i_{2k}-i_{1}$. We know the only contribution to $M_{2k}$ arises from terms where the $b$'s are matched in pairs. Thus given some $\tilde{x}_m$ there must be an $n = n(m)$ such that $\tilde{x}_m = \pm \tilde{x}_n$. Then each of the previous $k$ equations can be written as
\be
\tilde{x}_{m}\ =\ \delta_{j}\tilde{x}_{n}, \ \ \ \delta_j \in \{-1, 1\}.
\ee
By definition, there is some $\eta_{j} = \pm 1$ such that $\tilde{x}_{m} = \eta_{j}x_{j}$. Then $\tilde{x}_{n} = \delta_{j}\eta_{j}x_{j}$, so
\be
\tilde{x}_{1} + \tilde{x}_{2} + \cdots + \tilde{x}_{2k} \ = \ \sum_{j=1}^{k}\eta_{j}\left(1+\delta_{j}\right)x_{j}.
\ee
Finally, notice that
\be
\tilde{x}_{1} + \tilde{x}_{2} + \cdots + \tilde{x}_{2k} \ =  \ i_{1} - i_{2} + i_{2} - i_{3} + \cdots + i_{2k} - i_{1} \ = \ 0.
\ee
Thus
\be\label{eq:signslineardep}
\sum_{j=1}^{k}\eta_{j}\left(1+\delta_{j}\right)x_{j} \ = \ 0.
\ee

If any $\delta_{j} = 1$, then \eqref{eq:signslineardep} gives us a linear dependence between the $x_{j}$. Recall from the proof of Lemma \ref{lem:pairs} that we require all $x_{j}$ to be independently chosen for a pairing to contribute; otherwise, there are fewer than $k+1$ degrees of freedom. Thus, the only terms that contribute have each $\delta_{j} = -1$.

From \cite{JMP}, the analogous result holds for the singly palindromic and highly-palindromic Toeplitz ensembles, i.e.,
\be
i_{j} - i_{j+1} \ = - (i_{k} - i_{k+1}) \pm \mathcal{C}_{r_{jk}}.
\ee
\end{proof}

The above results motivate the following definition.

\begin{defi}[Pairing] A \textbf{pairing} is a matching of the vertices $i_{1}, i_{2},\dots, i_{2k}$ such that the vertices are matched exactly in pairs, and with a negative sign in \eqref{eq:pmpairing}. There are $\left(2k-1\right)!!$ pairings of the $2k$ vertices. As argued above in the proof of Lemma \ref{lem:pairs}, these pairings correspond to $O\left(N^{k + 1}\right)$ terms in the sum in \eqref{eq:expectedmoment} for the $2k^{\rm {th}}$ moment.  \end{defi}

As suggested above, we find that a good way to investigate the contribution of each potentially contributing term, i.e., each choice or tuple of $\left(i_{1}, \ldots, i_{2k}\right)$, is to associate each term with a pairing of $2k$ vertices on a circle, where the vertices are $\left|i_{1}-i_{2}\right|, \left|i_{2}-i_{3}\right|, \ldots, \left|i_{2k}-i_{1}\right|$. Because what matters are not the values of the $\left|i_{j}-i_{j+1}\right|$'s, but rather the pattern of how they are matched, any terms associated with the same pairing of the $2k$ vertices will have the same contribution. Thus, pairings that are the same up to a rotation of the vertices contribute the same since it is not the values of $i_{j}$ that matter but rather the distance between each vertex and its matching and the indices of the other pairs. Therefore, to further simplify the moment analysis, we make the following definition.

\begin{center}
\begin{figure}\label{sixthMomentConfig}
\includegraphics[height=30mm]{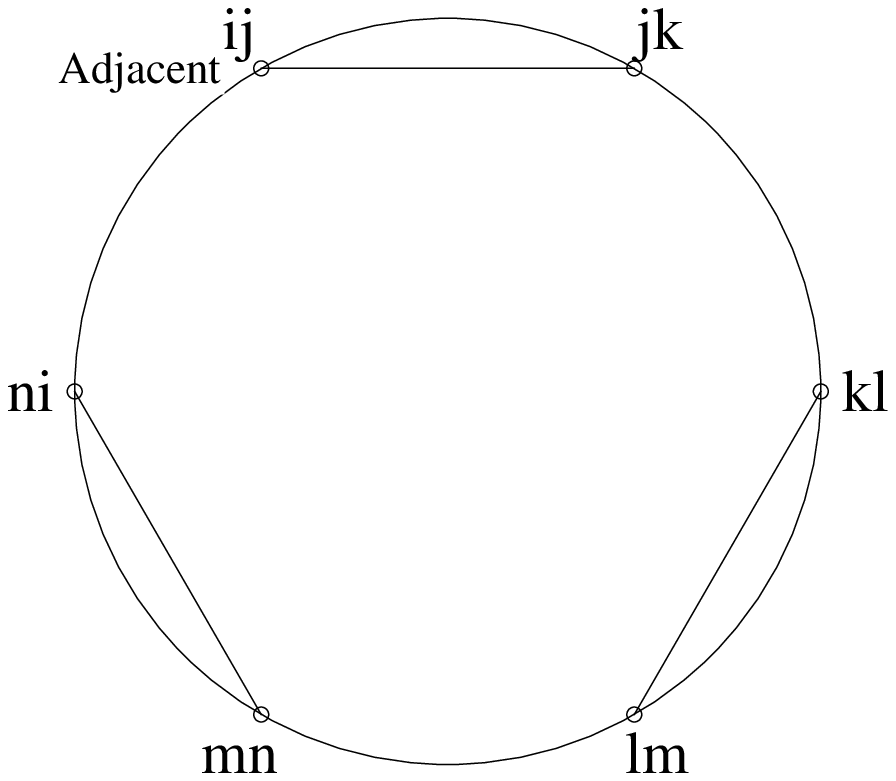}\ \includegraphics[height=30mm]{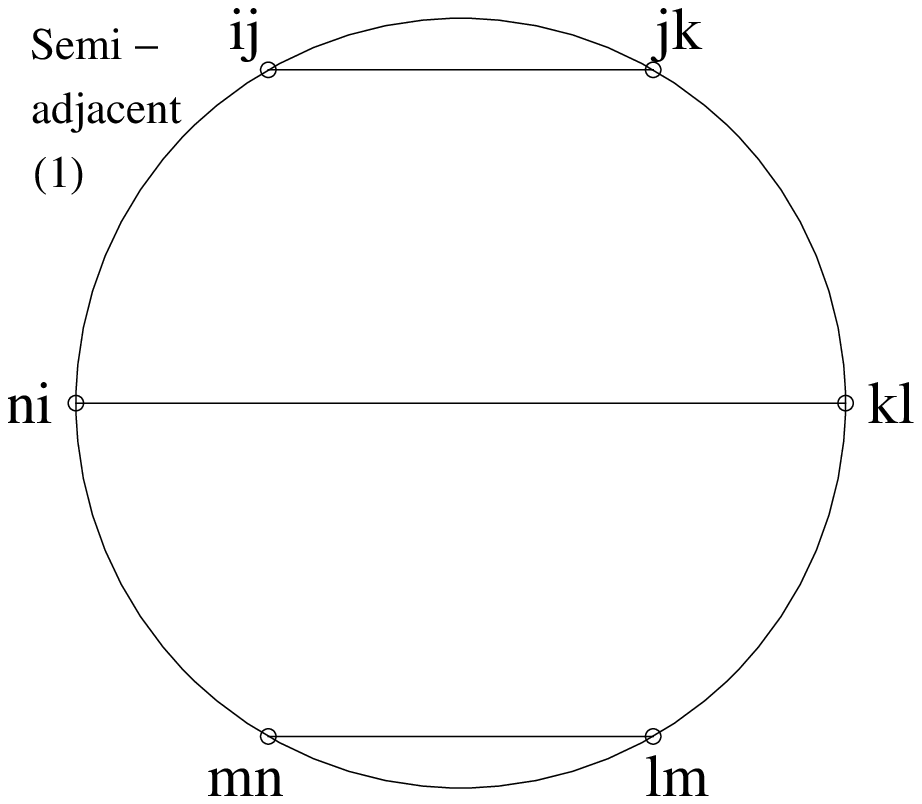}\  \includegraphics[height=30mm]{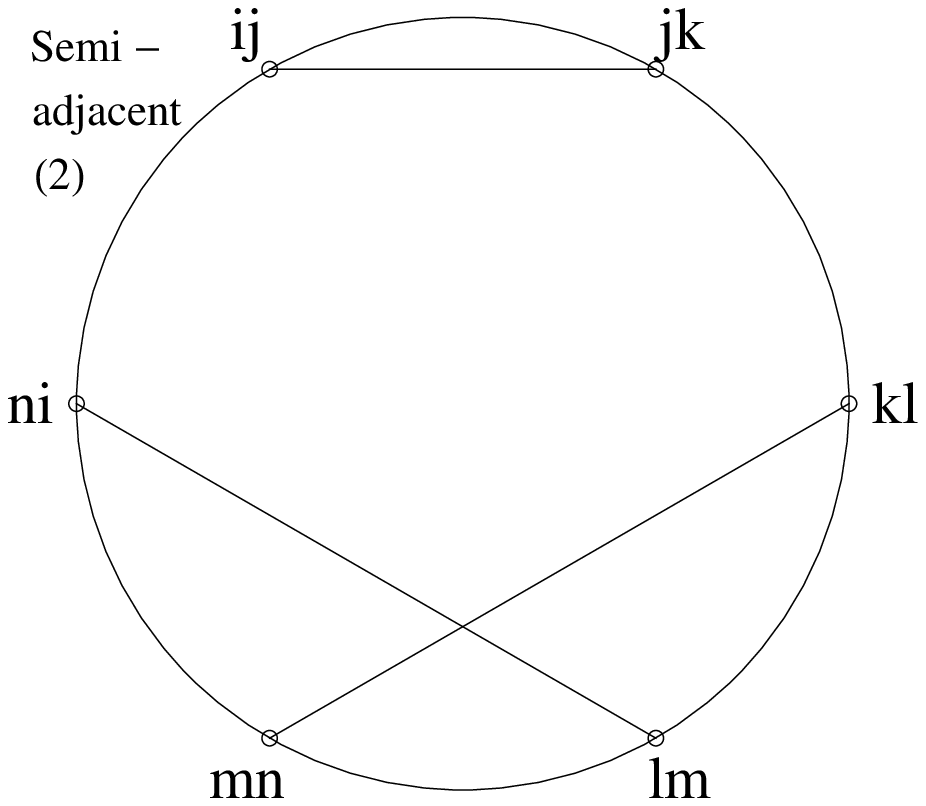}\ \includegraphics[height=30mm]{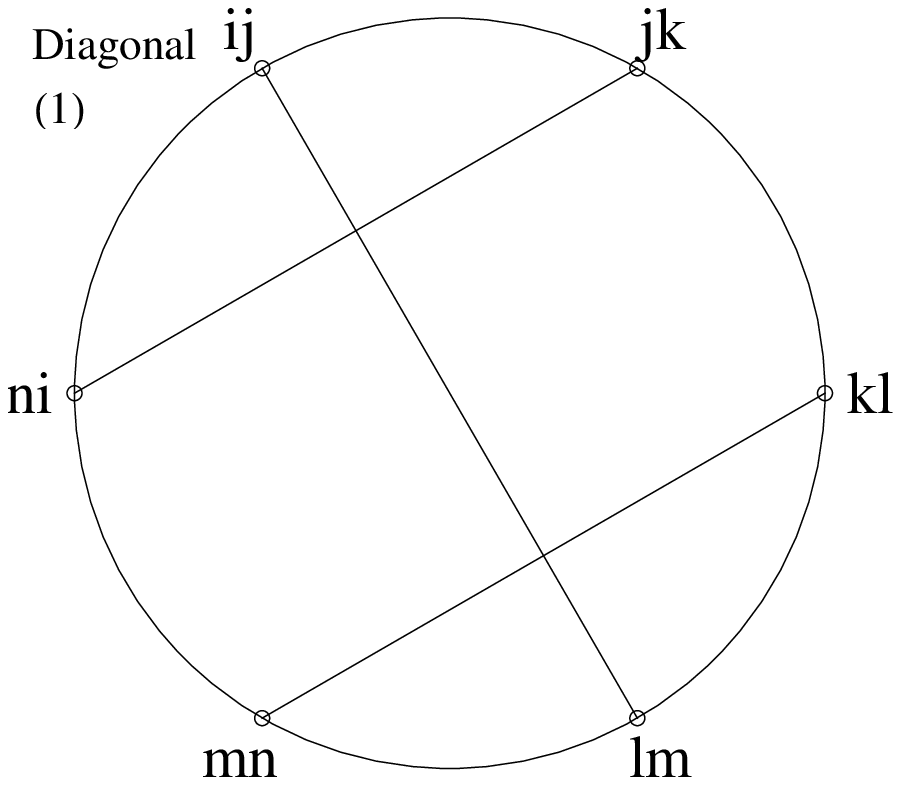}\  \includegraphics[height=30mm]{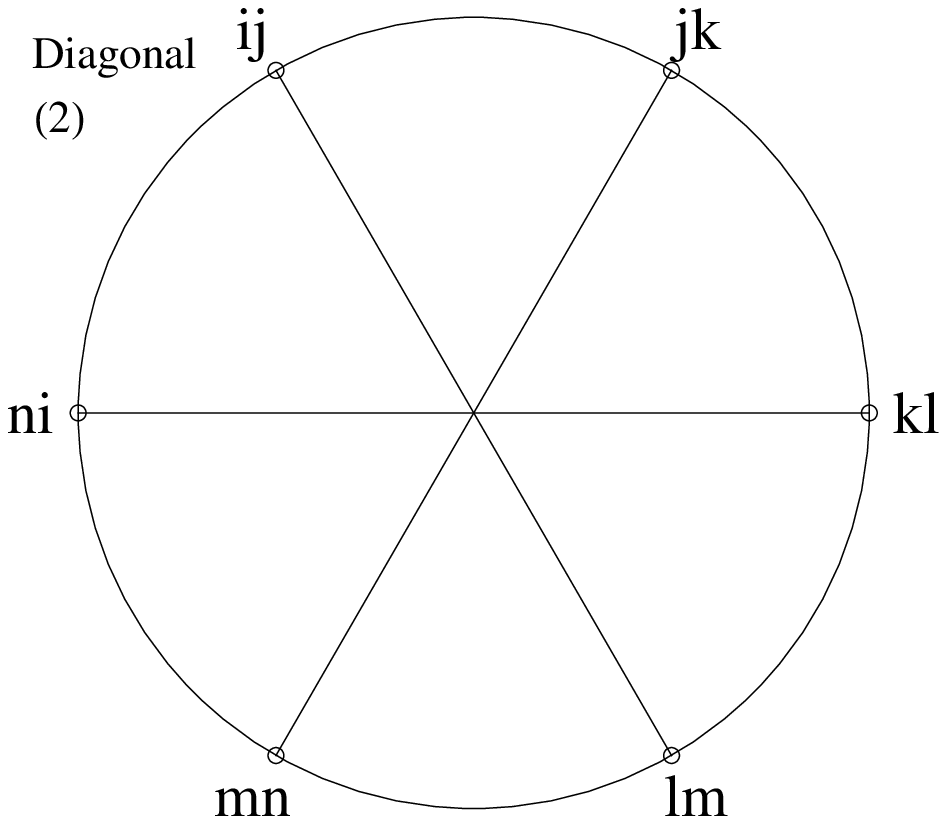}\
\caption{\label{fig:6moments} The five distinct configurations for the 6\textsuperscript{th} moment where vertices are matched exactly in pairs. The multiplicity under rotation of the five patterns are 2, 3, 6, 3 and 1 (for example, rotating the first pattern twice returns it to its initial configuration, while the third requires six rotations). The nomenclature is from \cite{KKMSX}, and is not relevant to our purposes here.}
\end{figure}
\end{center}

\begin{defi}[Configuration] Two pairings $\left\{\left(i_{a_{1}}, i_{a_{2}}\right), \left(i_{a_{3}}, i_{a_{4}}\right), \ldots, \left(i_{a_{2k-1}}, i_{a_{2k}}\right)\right\}$ and $\left\{\left(i_{b_{1}}, i_{b_{2}}\right)\right.$, $\left(i_{b_{3}}, i_{b_{4}}\right)$, $\ldots$, $\left.\left(i_{b_{2k-1}}, i_{b_{2k}}\right)\right\}$ are said to be in the same configuration if they are equivalent up to a relabeling by rotating the vertices; i.e., there is some constant $l$ such that $b_{j} = a_{j} + l \mod{2k}$. \end{defi}

For example, we display the five distinct configurations needed for the sixth moment in Figure \ref{fig:6moments}. The problem of determining the moments is thus reduced to determining for each configuration both the contribution of a pairing belonging to that configuration to the sum in \eqref{eq:expectedmoment} and the number of pairings belonging to that configuration.


\section{Determining the Moments}\label{sec:determiningmoments}

By Lemma \ref{lem:pairs}, for the rest of the paper we may assume the vertices are matched in exactly pairs. We distinguish between three types of vertices in these pairings.

\begin{figure}
\includegraphics[height=40mm]{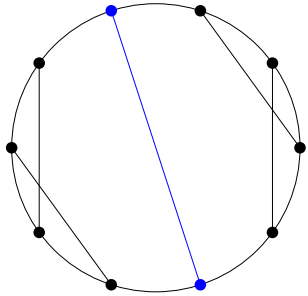}
\caption{A pairing of $10$ vertices with $8$ crossing vertices (in two symmetric sets of 4 vertices), and $2$ dividing vertices (connected by a main diagonal).}\label{fig:vertextypes}
\end{figure}

\begin{defi}[Crossing, non-crossing]
We say that a pair of vertices $\left(a, b\right)$, $a < b$, is in a \textbf{crossing} if there exists a pair of vertices $\left(x, y\right)$ such that the order of the four vertices, as we travel clockwise around the circle, is either $a, x, b, y$ or $x, a, y, b$. A pair $\left(a, b\right)$ is \textbf{non-crossing} if for every pair $\left(x, y\right)$, $x$ is between $a$ and $b$ (as we travel clockwise around the circle from $a$ to $b$) if and only if $y$ is.
\end{defi}

Pictorially, a pair is crossing if the line contained in the circle connecting its two vertices crosses another line connecting two other vertices. In Figure \ref{fig:6moments}, the first two configurations have no crossing vertices, the third has four, while all vertices are crossing for the fourth and fifth. Note the number of crossing vertices is always even and never two.

\begin{defi}
We say that a non-crossing pair of vertices $\left(a, b\right)$ (with $a < b$) is \textbf{dividing} if the following two conditions hold:

\begin{enumerate}

\item There exist two pairs of crossing vertices, $\left(x, y\right)$ and $\left(w, z\right)$, such that as we travel around the circle from $a$ to $b$ we have $x, y, w$ and $z$ are between $a$ and $b$.

\item There exist two pairs of crossing vertices,  $\left(p, q\right)$ and $\left(r, s\right)$, such that as we travel around the circle from $b$ to $a$ we have $p, q, r$ and $s$ are between $b$ and $a$.

\end{enumerate}

All other pairs are called \textbf{non-crossing non-dividing} pairs. \end{defi}

Pictorially, a pair is dividing if it ``divides'' the circle into two regions of pairs (no pair can cross a dividing edge since it must be non-crossing), where each region contains at least one crossing pair; see Figure \ref{fig:vertextypes} for an illustration. From the definition, we see that at least $10$ vertices are needed for a ``dividing'' pair to exist, and thus it is possible that new behaviors or complications arise in studying the higher moments (a similar situation arises in weighted $d$-regular graphs, where there is a marked change in behavior at the eighth moment; see \cite{GKMN} for details).

Note that all pairings belonging to a given configuration have the same number of crossing pairs and the same number of dividing pairs.

We show in this section that the contribution of each pairing in the unsigned case is weighted by a factor depending on the number of crossing pairs in that pairing. We then prove some combinatorial formulas that allow us to obtain closed form expressions for the number of pairings with $m$ vertices crossing for small $k$. As the combinatorics becomes prohibitively difficult for large $k$, we determine the limiting behavior in \S\ref{sec:limiting behavior}.

\subsection{Weighted Contributions}

The following theorem is central to our determination of the moments. It reduces the calculations to two parts. First, we need to know the contribution of a pairing in the non-signed case (equivalently, when $p=1$). While this is known precisely for the singly palindromic Toeplitz case, where each pairing contributes 1, in the Toeplitz case we only have upper and lower bounds on the contribution of all pairing. Second, we need to determine the number of vertices involved in crossing pairs, which we do in part in \S\ref{sec:countingcrossingconfigs}.

\begin{rek}
For ease of exposition, we prove the following lemmas in the Toeplitz case, and comment on the proofs (or barriers to proof) in the singly palindromic and highly palindromic cases. For the palindromic case, by (2.7) and (2.8) of \cite{JMP}, there should be some $\mathcal{C}_1$ and $\mathcal{C}_2$ terms added into equation (\ref{eq:pmpairing}) as well as parts of the proof for Lemma \ref{lem:negativesonly}; however, some minor changes to the proofs show that these lemmas still hold in the palindromic Toeplitz case.
\end{rek}

\begin{thm}\label{thm:weightedcontributions}
For each choice of a pairing $c$ of the vertices $\left(i_{1}, \ldots, i_{2k}\right)$, let $x(c)$ denote the contribution of this tuple in the unsigned case. Then, for the Toeplitz and singly palindromic Toeplitz ensembles, the contribution in the signed case is $x(c) (2p-1)^{e(c)}$, where $e(c)$ represents the number of vertices in crossing pairs in the configuration corresponding to $c$.
\end{thm}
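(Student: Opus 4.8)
The plan is to track the expected value of the product of sign variables $\epsilon_{i_1 i_2}\epsilon_{i_2 i_3}\cdots\epsilon_{i_{2k}i_1}$ attached to a fixed pairing $c$, and show it factors out as $(2p-1)^{e(c)}$ times the unsigned contribution $x(c)$. The starting point is that by Lemma~\ref{lem:negativesonly} the only tuples $(i_1,\dots,i_{2k})$ that contribute to $M_{2k}$ in the limit are those where the $b_\alpha$'s are matched exactly in pairs with a minus sign in \eqref{eq:pmpairing}; fix such a tuple belonging to configuration $c$. Its contribution in the unsigned case is $x(c) = \E(\prod_j b_{\alpha_j}^{2})$ (a product of second moments, one per pair), times the appropriate count, and in the signed case this same geometric/arithmetic structure is multiplied by $\E(\prod \epsilon_{i_m i_{m+1}})$. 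Since the $\epsilon$'s are independent of the $b$'s, the two expectations separate, so the whole task reduces to computing $\E\big(\prod_{m=1}^{2k}\epsilon_{i_m i_{m+1}}\big)$ for the index pattern coming from $c$.

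The key step is to understand which $\epsilon_{i_m i_{m+1}}$ coincide. Here I would exploit the crucial difference between the sign matrix and the Toeplitz matrix: $\epsilon_{i_m i_{m+1}} = \epsilon_{i_n i_{n+1}}$ if and only if $\{i_m, i_{m+1}\} = \{i_n, i_{n+1}\}$ as \emph{unordered pairs of actual indices}, whereas the $b$'s only require $|i_m - i_{m+1}| = |i_n - i_{n+1}|$. For a pairing with the minus sign, the pair relation forces $i_m - i_{m+1} = -(i_n - i_{n+1})$, i.e. $i_{m+1} = i_n$ and $i_m = i_{n+1}$ — but this is an equality of indices only when those index identifications are actually realized, which happens precisely for edges of $c$ that are \emph{non-crossing}. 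I would make this precise by the standard picture: label the edge $\{m,m'\}$ of the pairing; a non-crossing edge, together with the closed-walk structure $i_1 \to i_2 \to \cdots \to i_{2k} \to i_1$, pinches the walk so that $i_m = i_{m'+1}$ and $i_{m+1} = i_{m'}$, making the two corresponding $\epsilon$ factors literally equal, hence contributing $\epsilon^2 = 1$. Crossing edges do not force such an index coincidence, so their $\epsilon$ factors remain genuinely independent sign variables (no two of them get identified either, a fact that needs a brief check using the walk structure). Thus $\prod_m \epsilon_{i_m i_{m+1}}$ collapses to a product of $e(c)$ independent sign variables — one for each vertex in a crossing pair, grouped into $e(c)/2$ pairs — times a product of $1$'s from the non-crossing edges.

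Then $\E\big(\prod \epsilon\big) = \prod_{\text{crossing pairs}} \E(\epsilon) = (2p-1)^{e(c)/2 \cdot 2}$? — no: each crossing \emph{pair} of vertices carries a single independent $\epsilon$ appearing to the first power (the two legs of that edge are the \emph{same} $\epsilon_{ij}$ only in the non-crossing case; in the crossing case the two legs $\epsilon_{i_m i_{m+1}}$ and $\epsilon_{i_{m'} i_{m'+1}}$ are two \emph{distinct} sign variables, each to the first power), so the expectation is $(\E\epsilon)^{e(c)} = (2p-1)^{e(c)}$, since there are $e(c)$ such first-power factors and $\E(\epsilon_{ij}) = p - (1-p) = 2p-1$. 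This gives exactly the claimed weight. Finally I would note that the passage to the singly palindromic case changes \eqref{eq:pmpairing} only by the additive constants $\mathcal{C}_1,\mathcal{C}_2$, which shift the index identifications but do not alter which edges are non-crossing nor the independence structure of the surviving $\epsilon$'s, so the same argument applies verbatim (with the minor bookkeeping already flagged in the preceding remark).

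The main obstacle I expect is the middle step: rigorously arguing that for a crossing edge the two associated sign variables are genuinely \emph{distinct} and not secretly identified with each other or with a non-crossing edge's variable. This requires carefully combining the minus-sign pair relations with the cyclic identity $\sum(i_m - i_{m+1}) = 0$ to show that, after performing all forced index identifications, exactly the vertices in crossing pairs remain "free" in a way that keeps their $\epsilon$'s independent — essentially the same degree-of-freedom analysis used in Lemma~\ref{lem:pairs} and Lemma~\ref{lem:negativesonly}, but now tracking index \emph{equalities} rather than just $|i_m - i_{m+1}|$ equalities. Everything else (separation of expectations, $\E(\epsilon) = 2p-1$, $\epsilon^2 = 1$) is routine.
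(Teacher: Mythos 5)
Your overall strategy coincides with the paper's: factor $\E(\epsilon_{i_1i_2}\cdots\epsilon_{i_{2k}i_1})$ off from $x(c)$, show each non-crossing pair forces the index coincidences $i_r=i_{p+1}$, $i_{r+1}=i_p$ so that its two sign factors are literally equal and contribute $\E(\epsilon^2)=1$, and show each crossing pair carries two distinct sign variables to the first power, giving $(2p-1)^2$ per crossing pair and hence $(2p-1)^{e(c)}$ overall. However, the step you yourself flag as the ``main obstacle'' --- that the sign variables attached to crossing edges are not secretly identified with each other or with a non-crossing edge's variable --- is left unproven, and it is precisely the nontrivial half of the theorem (Lemma \ref{lem:SmallContribution} in the paper). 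The missing idea is to argue in the contrapositive and to exploit the matched-in-pairs structure of the $b$'s: if $\epsilon_{i_ai_{a+1}}=\epsilon_{i_bi_{b+1}}$, then the unordered sets $\{i_a,i_{a+1}\}$ and $\{i_b,i_{b+1}\}$ coincide, hence $|i_a-i_{a+1}|=|i_b-i_{b+1}|$, and since contributing terms have the $b$'s matched in \emph{exactly} pairs (Lemma \ref{lem:pairs}), the walk-edges $a$ and $b$ must be partners in the pairing $c$; this one observation already rules out every ``secret'' identification between edges that are not partners. Opposite orientation then gives $i_a=i_{b+1}$, so the telescoped arc sum $\sum_{k=a}^{b}(i_k-i_{k+1})$ equals a bounded combination of the constants $\mathcal{C}$, which reads as a linear relation $\sum_k\delta_k|i_k-i_{k+1}|=\sum\pm\mathcal{C}_{r_k}$ among otherwise free differences; any nonzero $\delta_k$ costs a degree of freedom, so every vertex strictly between $a$ and $b$ pairs within that arc and $(a,b)$ is non-crossing. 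Without this (or an equivalent) argument your claim that the crossing-edge $\epsilon$'s are ``genuinely independent'' is an assertion, not a proof.

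Two further points need care. In your non-crossing (``pinching'') direction the coincidence is not purely combinatorial: one telescopes the arc sum, and in the singly palindromic case must also exclude $i_r=i_{p+1}\pm(N-1)$, which would force $\{i_r,i_{p+1}\}=\{1,N\}$ and lose a degree of freedom; so your remark that the constants $\mathcal{C}_1,\mathcal{C}_2$ ``do not alter the identifications'' requires exactly this check rather than being automatic (and indeed the analogous claim fails for the doubly palindromic ensemble, cf.\ Lemma \ref{SixthMomentDifferent}, so the palindromic bookkeeping is not cosmetic). Finally, all of these identifications hold only for the terms that survive in the limit: tuples with additional coincidences are $o(N^{k+1})$ in number and are killed by the $N^{-(k+1)}$ normalization, so the conclusion is that the limiting contribution of a configuration is $x(c)(2p-1)^{e(c)}$, not that every individual tuple obeys the formula.
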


Recall that the contribution from any choice of $\left(i_{1}, \ldots, i_{2k}\right)$ is
\bea
\E(\epsilon_{i_1 i_2} b_{|i_1 - i_2|} \epsilon_{i_2 i_3} b_{|i_2 - i_3|} \cdots \epsilon_{i_{2k} i_1} b_{|i_{2k} - i_1}|) & \ = \ &
\E(\epsilon_{i_1 i_2} \epsilon_{i_2 i_3} \cdots \epsilon_{i_{2k} i_1}) \E(b_{|i_1 - i_2|} \cdots b_{|i_k - i_1|}) \nonumber\\ & \ = \ & \E(\epsilon_{i_1 i_2} \epsilon_{i_2 i_3} \cdots \epsilon_{i_{2k} i_1}) x(c).
\eea

Thus, we want to show that $\E(\epsilon_{i_1 i_2} \epsilon_{i_2 i_3} \cdots \epsilon_{i_{2k} i_1}) = (2p-1)^{e(c)}$. We do this by showing that for each pair $\left(i_{j}, i_{j+1}\right), \left(i_{k}, i_{k+1}\right)$ where $b_{\left|i_{j}-i_{j+1}\right|} = b_{\left|i_{k}-i_{k+1}\right|}$,
\begin{equation}
\twocase{\E\left(\epsilon_{i_{j}i_{j+1}}\epsilon_{i_{k}i_{k+1}}\right) \ = \ }{\left(2p-1\right)^{2}}{if $\left(i_{j}, i_{j+1}\right), \left(i_{k}, i_{k+1}\right)$ are a crossing pair}{1}{otherwise.}
\end{equation}

Notice that
\be
\E\left(\epsilon_{\alpha}\right)\ =\ 1 \cdot p + (-1) \cdot (1-p) \ = \ 2p-1, \ \ \ \
\E\left(\epsilon_{\alpha}^{2}\right)\ =\ 1.
\ee
Therefore, if $m$ epsilons are chosen independently, the expected value of their product is $(2p-1)^m$.

Before stating and proving some lemmas needed in the proof of Theorem \ref{thm:weightedcontributions}, we introduce a convenient notation.

\begin{defi}[Vertex ordering]
Fix an integer $2k$ and consider the circle with $2k$ vertices spaced uniformly, labeled 1, 2, $\dots$, $2k$. If $a, b$ and $x$ are three of these vertices, by $a < x < y$ we mean that we pass through vertex $x$ as we travel clockwise about the circle from vertex $a$ to vertex $b$. \end{defi}

\begin{lem}\label{lem:LargeContribution}
For the Toeplitz and singly palindromic Toeplitz ensembles,
$\E(\epsilon_{i_1 i_2} \epsilon_{i_2 i_3} \cdots \epsilon_{i_{2k} i_1}) \geq (2p-1)^{e(c)}$.
\end{lem}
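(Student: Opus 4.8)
The plan is to analyze the product $\epsilon_{i_1 i_2} \epsilon_{i_2 i_3} \cdots \epsilon_{i_{2k} i_1}$ by grouping the $2k$ factors according to which pairs of edges share a common value $b_{|i_j - i_{j+1}|}$. Since we have already reduced (by Lemmas~\ref{lem:pairs} and \ref{lem:negativesonly}) to pairings where the $b$'s match exactly in pairs with a minus sign, each edge $(i_j, i_{j+1})$ is matched with exactly one partner edge $(i_\ell, i_{\ell+1})$ satisfying $i_j - i_{j+1} = -(i_\ell - i_{\ell+1})$. The key observation is that for such a matched pair of edges, the two sign variables $\epsilon_{i_j i_{j+1}}$ and $\epsilon_{i_\ell i_{\ell+1}}$ are \emph{equal} as random variables precisely when the corresponding unordered index sets coincide, i.e. $\{i_j, i_{j+1}\} = \{i_\ell, i_{\ell+1}\}$; otherwise they are independent. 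So the expectation $\E(\epsilon_{i_1 i_2} \cdots \epsilon_{i_{2k} i_1})$ factors over the $k$ matched pairs of edges, contributing a factor $\E(\epsilon_\alpha^2) = 1$ when the index sets coincide and a factor $\E(\epsilon_\alpha)\E(\epsilon_\beta) = (2p-1)^2$ when they are distinct (and independent).

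The heart of the matter is therefore to show that a matched pair of edges has distinct index sets exactly when it forms a crossing pair (in the sense of the \textbf{Crossing, non-crossing} definition), and to account correctly for vertices shared between several pairs. First I would establish: if $(i_j, i_{j+1})$ and $(i_\ell, i_{\ell+1})$ are matched with the minus sign and $\{i_j, i_{j+1}\} = \{i_\ell, i_{\ell+1}\}$, then they are a non-crossing pair (the two chords coincide or are ``parallel'' in an obvious sense, so no crossing); conversely, if the index sets differ, the minus-sign relation $i_j - i_{j+1} = -(i_\ell - i_{\ell+1})$ forces the four (distinct) values to interleave around the circle in the pattern of the \textbf{Vertex ordering} definition, hence a crossing. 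This is a short case analysis on the relative order of $i_j, i_{j+1}, i_\ell, i_{\ell+1}$ on the line $\{1, \dots, N\}$, translated to positions on the circle of $2k$ vertices. For the present lemma only the inequality $\E(\epsilon_{i_1 i_2}\cdots \epsilon_{i_{2k} i_1}) \geq (2p-1)^{e(c)}$ is needed, which is weaker: I would bound the number of independent $\epsilon$'s appearing with odd multiplicity from below. Each vertex of the circle lying in a crossing pair contributes one edge incident to it that belongs to a crossing matched-edge-pair; grouping these, at least $e(c)/2$ of the $k$ matched pairs of edges are crossing pairs, each forcing a factor that is $(2p-1)^2$ when the relevant $\epsilon$'s are genuinely independent, and since $|2p-1| \le 1$ every factor of $1$ that we might instead pick up only increases the product. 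Carefully: since $0 \le 2p-1 \le 1$, replacing any true factor (which is $1$ or $(2p-1)^2$) by $(2p-1)^2$ can only decrease the product, and the number of crossing matched-edge-pairs times $2$ is at most $e(c)$ is the wrong direction — so I must instead argue that the product of the \emph{actual} factors, each of which is $\ge (2p-1)^2$, over exactly those matched-edge-pairs that are crossing, is $\ge (2p-1)^{2 \cdot \#\{\text{crossing matched-edge-pairs}\}}$, and then show $2 \cdot \#\{\text{crossing matched-edge-pairs}\} \le e(c)$ would give the wrong bound, so in fact one shows equality-or-more: every vertex in a crossing pair forces its incident matched-edge-pair to be crossing, giving $\#\{\text{crossing matched-edge-pairs}\} \ge e(c)/2$, whence the product is $\ge (2p-1)^{e(c)}$, using $0 \le 2p - 1 \le 1$.

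The main obstacle I anticipate is the bookkeeping in the last step: a single vertex can be an endpoint of edges belonging to two different matched-edge-pairs, and one must be careful not to double-count, and must verify that ``$e(c)$ vertices in crossing pairs'' translates to ``at least $e(c)/2$ matched-edge-pairs are crossing.'' The clean way is to observe that each matched-edge-pair involves at most $4$ vertices but contributes at most $4$ to the count $e(c)$, while a \emph{non}-crossing matched-edge-pair contributes $0$ to $e(c)$ (its vertices are non-crossing unless some \emph{other} pair crosses through them — here one must be careful, since ``crossing'' is a property of a pair, and a vertex is ``in a crossing pair'' if the pair containing it crosses \emph{some} pair). I would handle this by noting $e(c)$ counts vertices $v$ such that the matched-edge-pair containing $v$ is a crossing pair, so $e(c) = 2 \cdot \#\{\text{crossing matched-edge-pairs involving that vertex}\}$ summed appropriately — actually $e(c)$ equals $2$ times the number of \emph{vertices}' pairs... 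I would instead simply write $e(c) = \sum_{\text{crossing matched-edge-pairs } P} (\text{number of distinct vertices of } P)$ and bound each term by $4$, while each such $P$ contributes a factor $\ge (2p-1)^2$ to the expectation; combined with $\#\{\text{crossing } P\} \ge e(c)/4$ this only gives $\ge (2p-1)^{e(c)/2}$, which is too weak — so the right statement is that a crossing matched-edge-pair $P$ with $4$ distinct vertices yields two \emph{independent} $\epsilon$'s each appearing with multiplicity $1$ (hence a factor $(2p-1)^2$), and all four of its vertices are ``in a crossing pair,'' giving the exact match $(2p-1)^2 = (2p-1)^{4/2}$; a crossing pair cannot have fewer than $4$ distinct vertices, so $e(c)$ is exactly $4$ times the number of crossing matched-edge-pairs, and the expectation is exactly $\prod (2p-1)^{2} = (2p-1)^{e(c)/2}$ — which would contradict the theorem, so I have the correspondence between ``vertices'' and ``pairs'' off by a factor. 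The correct reading (which I would pin down first) is that $e(c)$ counts \emph{vertices on the circle}, of which there are $2k$, and a crossing matched-edge-pair uses exactly $4$ of them and forces $2$ independent signs, so indeed $\E = (2p-1)^{2 \cdot (\#\text{crossing pairs})} = (2p-1)^{e(c)/2}$ \emph{unless} a crossing pair can force more than $2$ independent signs or unless some signs from non-crossing pairs are also independent — and resolving exactly which of these happens, i.e. getting the exponent to come out to $e(c)$ rather than $e(c)/2$, is precisely the delicate point; I expect the resolution is that each crossing matched-edge-pair of $4$ vertices contributes $\E(\epsilon_\alpha)\E(\epsilon_\beta) = (2p-1)(2p-1) = (2p-1)^2$ and that ``$e(c)$'' in the paper's normalization equals $2 \cdot (\#\text{crossing matched-edge-pairs})$, so that the stated exponent $e(c)$ matches $(2p-1)^{2\cdot(\#\text{crossing pairs})}$; for the inequality direction needed here, it suffices to show each crossing matched-edge-pair contributes a factor lying in $[(2p-1)^2, 1]$ and is $\ge (2p-1)^2$, and that there are at least $e(c)/2$ of them, giving the claimed lower bound $\ge (2p-1)^{e(c)}$ after using $0 \le 2p-1 \le 1$ to bound all remaining factors below by powers of $(2p-1)$ harmlessly.
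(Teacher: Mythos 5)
The bookkeeping at the end of your proposal can be repaired: with the paper's conventions the $2k$ circle vertices \emph{are} the edges $(i_1,i_2),\ldots,(i_{2k},i_1)$, a matched pair of edges is a single chord consisting of exactly two such vertices, and $e(c)$ is twice the number of crossing chords; so a crossing chord whose two signs are genuinely distinct contributes $(2p-1)^2$, and the exponents match without any factor-of-two mystery. The genuine gap is in the step you call ``a short case analysis on the relative order of $i_j, i_{j+1}, i_\ell, i_{\ell+1}$.'' Whether a chord is crossing is a property of its position relative to the \emph{other} chords of the $2k$-vertex diagram, not of those four index values, so no local analysis of them can decide it. In particular, the direction you actually need for the lower bound --- non-crossing implies $\{i_j,i_{j+1}\}=\{i_\ell,i_{\ell+1}\}$, hence $\epsilon_{i_j i_{j+1}}=\epsilon_{i_\ell i_{\ell+1}}$ and a factor of exactly $1$ --- cannot follow from the minus-sign relation alone, because by Lemma~\ref{lem:negativesonly} that relation holds for every matched pair, crossing or not. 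The paper's proof uses the non-crossing hypothesis globally: telescoping $\sum_{k=r}^{p}(i_k-i_{k+1})=i_r-i_{p+1}$ and observing that, since every vertex strictly between $r$ and $p$ is paired with another vertex strictly between $r$ and $p$, all intermediate differences cancel in pairs, forcing $i_r=i_{p+1}$ and (via the complementary arc) $i_{r+1}=i_p$. Moreover, in the singly palindromic case each matched pair carries a possible offset $\pm(N-1)$; a nonzero total offset must be excluded by noting it forces $\{i_r,i_{p+1}\}=\{1,N\}$ and so costs a degree of freedom, so those terms vanish in the limit. Nothing in your outline supplies either the global cancellation or this degree-of-freedom step.

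Two smaller points. Your converse claim (``distinct index sets force the four values to interleave, hence a crossing'') is not correct as argued and is not needed for this lemma; it is essentially the content of Lemma~\ref{lem:SmallContribution}, where it is again proved by a global degree-of-freedom argument rather than locally. Finally, once the non-crossing chords are shown to contribute exactly $1$, the inequality itself is immediate and needs none of the hedged accounting in your last paragraph: the expectation is a product of factors $\E(\epsilon^{n})\in\{2p-1,\,1\}$ with $0\le 2p-1\le 1$, cross-chord coincidences of $\epsilon$'s are impossible (equal unordered index sets would force equal $b$-indices, i.e., the same chord), so at most $e(c)$ factors can equal $2p-1$, giving $\E(\epsilon_{i_1 i_2}\cdots\epsilon_{i_{2k} i_1})\ge (2p-1)^{e(c)}$.
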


\begin{proof}
To prove $\E(\epsilon_{i_1 i_2} \epsilon_{i_2 i_3} \cdots \epsilon_{i_{2k} i_1}) \geq (2p-1)^{e(c)}$, we show that pairs not in a crossing contribute $1$. Consider a non-crossing pair $\left(i_r, i_{r+1}\right), \left(i_p, i_{p+1}\right)$ (corresponding to vertices $r$ and $p$ on the circle with $2k$ labeled vertices), with $r < p$. For each $\left(i_q, i_{q+1}\right)$ paired with $\left(i_{q'}, i_{q'+1}\right)$, we have $r < q < p$ if and only if $r < q' < p$. Recall from \eqref{eq:pmpairing} and Lemma \ref{lem:negativesonly} that in the Toeplitz case,
\be\label{form1}
i_q - i_{q+1} \ = \ -(i_{q'} - i_{q'+1}),
\ee
while in the singly palindromic Toeplitz case,
\be\label{form2}
i_q - i_{q+1} \ = \ -(i_{q'} - i_{q'+1}) + Q(q,q'), \ \ \ {\rm where}\ Q(q,q') \in \{-(N -1), 0, N-1\}.
\ee
Thus
\be
\sum_{k=r}^p (i_{k} - i_{k+1}) \ = \ t(N-1)
\ee for some integer $t$ because each difference in the sum is paired with its additive inverse, which is also in the sum.
As\be
\sum_{k=r}^p (i_{k} - i_{k+1}) \ = \ (i_r - i_{r+1}) + (i_{r+1} - i_{r+1}) + \cdots + (i_p - i_{p+1})\ =\ i_{r} - i_{p+1},
\ee we must have $i_r = i_{p+1} \pm t(N-1)$. It is clearly impossible to have $|t| > 1$, and if $t = \pm 1$, this forces $\{i_r, i_{p+1}\} = \{1,N\}$; thus $t=0$. Since this situation uses up a degree of freedom, this implies that $i_r = i_{p+1}$. By a similar argument applied to the sum
\be
\sum_{k=p}^r (i_{k} - i_{k+1})
\ee
(taking indices cyclically), $i_{r+1} = i_p$. Therefore $\epsilon_{i_r i_{r+1}} = \epsilon_{i_p i_{p+1}}$, and hence $\E(\epsilon_{i_r i_{r+1}} \epsilon_{i_p i_{p+1}}) = 1$.
\end{proof}

\begin{lem}\label{lem:SmallContribution}
For the Toeplitz, singly palindromic Toeplitz, and highly palindromic Toeplitz ensembles,
$\E(\epsilon_{i_1 i_2} \epsilon_{i_2 i_3} \cdots \epsilon_{i_{2k} i_1}) \leq (2p-1)^{e(c)}$.
\end{lem}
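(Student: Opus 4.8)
The plan is to show the reverse inequality $\E(\epsilon_{i_1 i_2}\cdots \epsilon_{i_{2k}i_1}) \le (2p-1)^{e(c)}$ by producing, for each pairing $c$, an exact evaluation of the $\epsilon$-expectation and checking it is at most $(2p-1)^{e(c)}$. Since the $\epsilon_{ij}$ are independent across distinct index-pairs $\{i,j\}$ and satisfy $\E(\epsilon_\alpha)=2p-1$, $\E(\epsilon_\alpha^2)=1$, the product $\E(\epsilon_{i_1 i_2}\cdots \epsilon_{i_{2k}i_1})$ equals $(2p-1)^{M}$, where $M$ is the number of $\epsilon$-variables that appear an \emph{odd} number of times in the product $\epsilon_{i_1 i_2}\cdots\epsilon_{i_{2k}i_1}$ (each variable appearing an even number of times contributes a factor of $1$). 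So the whole lemma reduces to the purely combinatorial claim: for any pairing $c$, the number $M$ of $\epsilon$-labels occurring an odd number of times is at most $e(c)$, the number of vertices in crossing pairs. Note $2p-1 \in [0,1]$, so larger exponents give smaller values, and we want $M \ge$ (something) — wait, to get $\le (2p-1)^{e(c)}$ we need $M \ge e(c)$; combined with Lemma \ref{lem:LargeContribution} which effectively shows the contribution from non-crossing pairs is trivial, the two lemmas pin down the value exactly, matching Theorem \ref{thm:weightedcontributions}.

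First I would recall the structure coming from Lemma \ref{lem:negativesonly}: the $b$-pairing forces, for each matched pair of edges $(i_q,i_{q+1})$ and $(i_{q'},i_{q'+1})$, the relation $i_q - i_{q+1} = -(i_{q'}-i_{q'+1})$ in the Toeplitz case (with a bounded additive shift $Q(q,q')$ in the palindromic cases, which by the degree-of-freedom argument of Lemma \ref{lem:LargeContribution} is forced to vanish whenever it would otherwise cost a degree of freedom). From Lemma \ref{lem:LargeContribution} we already know that a non-crossing matched pair of edges forces $i_r = i_{p+1}$ and $i_{r+1}=i_p$, hence $\epsilon_{i_r i_{r+1}} = \epsilon_{i_p i_{p+1}}$, i.e. those two occurrences coincide and cancel in pairs. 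So the $\epsilon$-labels occurring an odd number of times can only come from the edges belonging to crossing pairs; there are exactly $e(c)$ such edges. The key step is then to show that among these $e(c)$ crossing edges, no two of them carry the same $\epsilon$-label unless that coincidence is itself forced — more precisely, that one cannot collapse more than is already accounted for, so at least $e(c)$ distinct labels survive with odd multiplicity. I would argue this by supposing two crossing edges $(i_q,i_{q+1})$ and $(i_{q'},i_{q'+1})$ satisfied $\{i_q,i_{q+1}\}=\{i_{q'},i_{q'+1}\}$ as unordered pairs, and showing that the induced linear relation among the $x_j=|i_j-i_{j+1}|$ (an extra relation beyond the $k$ pairing relations already used) would reduce the number of independent indices below $k+1$, so such a pairing contributes $o(N^{k+1})$ and may be discarded — exactly as in the proof of Lemma \ref{lem:pairs}. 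Hence among contributing pairings each crossing edge carries a genuinely ``fresh'' $\epsilon$ with odd multiplicity, giving $M \ge e(c)$ and therefore $\E(\epsilon_{i_1 i_2}\cdots\epsilon_{i_{2k}i_1}) = (2p-1)^M \le (2p-1)^{e(c)}$ since $0 \le 2p-1 \le 1$.

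The main obstacle I anticipate is the bookkeeping that distinguishes ``forced'' coincidences of $\epsilon$-labels from ``accidental'' ones: one must be careful that the crossing edges, together with the sign constraints $i_q-i_{q+1}=-(i_{q'}-i_{q'+1})$, cannot conspire to make an $\epsilon_{i_q i_{q+1}}$ equal to some $\epsilon_{i_{q'}i_{q'+1}}$ from a \emph{non}-crossing pair, nor to make three or more crossing edges share a label in a way that leaves fewer than $e(c)$ odd-multiplicity variables. Handling this cleanly requires tracking which index equalities are genuinely imposed (the $i_r=i_{p+1}$ type forced by non-crossing pairs) versus which would cost a degree of freedom; the palindromic cases add the extra nuisance of the shift terms $Q(q,q')\in\{-(N-1),0,N-1\}$, which must be shown not to open up new ways to force $\epsilon$-equalities — again by checking that any nonzero $Q$ either is impossible for range reasons or uses a degree of freedom. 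I would organize the argument so that all of these reduce to the single recurring principle already used twice in Section \ref{sec:determiningmoments}: a relation that is not one of the $k$ defining pairing relations is a genuine extra constraint, hence kills a degree of freedom, hence the offending pairing is negligible and can be dropped from the count.
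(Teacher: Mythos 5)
Your proposal is correct and follows essentially the same route as the paper: reduce $\E(\epsilon_{i_1i_2}\cdots\epsilon_{i_{2k}i_1})$ to $(2p-1)^{M}$ with $M$ the number of odd-multiplicity $\epsilon$-labels, and obtain $M\ge e(c)$ by arguing that an $\epsilon$-coincidence involving a crossing edge would impose a relation on the differences $|i_j-i_{j+1}|$ not implied by the $k$ pairing relations, hence costing a degree of freedom exactly as in Lemma \ref{lem:pairs}. The paper carries out precisely this plan, first observing that label equality forces the two edges to be paired with each other (their $b$-values coincide) and then making the ``extra relation'' explicit via the telescoping sum with coefficients $\delta_k=\pm1$ on edges crossing the chord, with the palindromic $\mathcal{C}$-shifts harmless because the number of their possible values is independent of $N$.
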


\begin{proof}
We show $\E(\epsilon_{i_1 i_2} \epsilon_{i_2 i_3} \cdots \epsilon_{i_{2k} i_1}) \leq (2p-1)^{e(c)}$ by showing that if $\epsilon_{i_a i_{a+1}} = \epsilon_{i_b i_{b+1}}$, $a <b$, then $\left(i_{a}, i_{a+1}\right), \left(i_{b}, i_{b+1}\right)$ are non-crossing. This suffices to prove the result since  the only dependency between the $\epsilon$'s arises from the requirement that the matrix is real symmetric. Thus, we have a dependency between $\epsilon_{i_s i_{s+1}}$ and $\epsilon_{i_p i_{p+1}}$ if and only if we know they are equal. In showing that a dependency between $\epsilon$'s implies the corresponding vertex pair must be non-crossing, we show that crossing pairs imply independent $\epsilon$'s and thus contribute $\left(2p-1\right)^{2}$.

If $\epsilon_{i_{a}i_{a+1}} = \epsilon_{i_{b}i_{b+1}}$ then it must be true that the unordered sets $\left\{i_{a}, i_{a+1}\right\}$ and $\left\{i_{b}, i_{b+1}\right\}$ are equal. This implies that $|i_a - i_{a+1}| = |i_b-i_{b+1}|$, so $\left(i_{a}, i_{a+1}\right), \left(i_{b}, i_{b+1}\right)$ must be paired on the circle. Since the only contributing terms are when they are paired in opposite orientation, we then know that  $i_a = i_{b+1}$, so
\be\label{form3}
\sum_{k=a}^b (i_{k} - i_{k+1}) \ = \ i_{a} - i_{b+1}\ =\ \sum_k \pm \mathcal{C}_{r_k}.
\ee
We can rewrite this sum as
\be\label{form4}
\sum_{k=b}^d \delta_k |i_{k} - i_{k+1}| \ = \ \sum_k \pm \mathcal{C}_{r_k},
\ee
where $\delta_k$ is $\pm 1$ if the vertex $k$ is paired with is less than $a$ or greater than $b$, and $0$ if and only if the vertex $k$ is paired with is between $a$ and $b$. However, since the number of possible values for $\sum_{k} \pm \mathcal{C}_{r_k}$ is independent of $N$, a linear dependence among the differences is impossible, as we need to have $N^{k+1}$ degrees of freedom for each configuration (see the proof of Lemma \ref{lem:pairs}). So each $\delta_k = 0$, and each vertex between vertices $a$ and $b$ is paired with something else between $a$ and $b$. Thus, no edges cross the edge between vertices $a$ and $b$.
\end{proof}

\begin{proof}[Proof of Theorem \ref{thm:weightedcontributions}] For Toeplitz and singly palindromic Toeplitz matrices, we have shown that an epsilon is unmatched if and only if its edge is in a crossing. Thus, an epsilon is not paired if and only if its edge is not in a crossing. Therefore the contribution is weighted by $\E(\epsilon_{i_1 i_2} \epsilon_{i_2 i_3} \cdots \epsilon_{i_{2k} i_1})$, which by Lemmas \ref{lem:LargeContribution} and \ref{lem:SmallContribution} is $(2p-1)^{e(c)}$, completing the proof.
\end{proof}

\begin{rek}
In the doubly palindromic Toeplitz case, Lemma \ref{lem:LargeContribution} does not hold for the sixth moment, as we shall see in Lemma \ref{SixthMomentDifferent}. In particular, this means the determination of the limiting rescaled spectral measures for general signed ensembles and general $p$ is harder.
\end{rek}

\begin{lem}\label{lem:E(c)=0}
For the Toeplitz, singly palindromic Toeplitz, and highly palindromic Toeplitz ensembles, if the contribution from a non-crossing configuration was $x$ before the weighting, it is at most $(2p-1)^4(x-1) + 1$ after applying the weighting.
\end{lem}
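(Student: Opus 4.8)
Lemma~\ref{lem:E(c)=0} asserts that if a non-crossing configuration $c$ contributes $x$ in the unsigned case, then after weighting it contributes at most $(2p-1)^4(x-1)+1$. The plan is to extract from Theorem~\ref{thm:weightedcontributions} the precise weight and then argue that any non-crossing configuration on $2k$ vertices (with $2k \geq 6$, since otherwise the configuration is entirely non-crossing non-dividing and the claim is trivial with $x$ the relevant quantity) has $e(c) \geq 4$. First I would dispose of the case $e(c) = 0$: a configuration with no crossing vertices is a non-crossing pairing, its unsigned contribution is $x = 1$ (the Catalan count), and the weighted contribution is $x(2p-1)^0 = 1 = (2p-1)^4(1-1)+1$, so the bound holds with equality. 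So assume $c$ is non-crossing as a \emph{configuration type in the statement's sense} but still has some crossing pairs, i.e. $e(c) \geq 2$; by the parity remark following the crossing/non-crossing definition ("the number of crossing vertices is always even and never two"), we in fact have $e(c) \geq 4$.

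The heart of the argument is then purely arithmetic. By Theorem~\ref{thm:weightedcontributions}, the weighted contribution of $c$ equals $x(2p-1)^{e(c)}$. Since $p \in [1/2,1]$ we have $t := (2p-1) \in [0,1]$, so $0 \leq t^{e(c)} \leq t^4$ whenever $e(c) \geq 4$. I would then need a sign/magnitude control on $x$: the contribution $x(c)$ of a non-crossing configuration in the unsigned case is nonnegative (for the singly palindromic Toeplitz case each pairing contributes exactly $1$, and for the Toeplitz case the contributions are nonnegative and bounded by the palindromic value — this is exactly the "upper and lower bounds" alluded to before the statement of Theorem~\ref{thm:weightedcontributions}), hence $x \geq 0$. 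If $x \geq 1$, then $x t^{e(c)} \leq x t^4 = t^4(x-1) + t^4 \leq t^4(x-1) + 1$, using $t^4 \leq 1$. If $0 \leq x < 1$, then $x - 1 < 0$ and $t^4(x-1) + 1 \geq t^4 \cdot 1 \cdot$ — more carefully, $xt^{e(c)} \leq x \leq 1$, while $t^4(x-1)+1 = 1 - t^4(1-x)$; to conclude I would use that for non-crossing configurations with $e(c)\geq 4$ one actually has $x \leq 1$ and $xt^{e(c)} \le xt^4 \le t^4$, and $t^4 \le 1 - t^4(1-x)$ reduces to $t^4(2-x) \le 1$, which holds since $t \le 1$ and $0 \le x \le 1$ give $t^4(2-x) \le 2$ — this is too weak, so the clean route is to just invoke $x \le 1$ throughout, making $xt^{e(c)} \le xt^4 = t^4 x \le t^4 = t^4(x-1) + t^4(1-x) + t^4 \cdot$ — I will instead present it via $xt^{e(c)} \le xt^4$ and $xt^4 - t^4(x-1) = t^4 \le 1$.

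Let me streamline: the cleanest presentation is to note $xt^{e(c)} \le x t^4$ (as $t\in[0,1]$, $e(c)\ge 4$), and then $xt^4 \le t^4(x-1) + 1 \iff t^4 \le 1$, which is immediate. So the whole proof is: (i) if $e(c)=0$, equality holds as computed above; (ii) otherwise $e(c)\ge 4$ by the parity remark, and since $t = 2p-1 \in [0,1]$ we get $x(2p-1)^{e(c)} \le x(2p-1)^4 \le (2p-1)^4(x-1)+1$, the last step being $(2p-1)^4 \le 1$. The only genuine input needed beyond Theorem~\ref{thm:weightedcontributions} is the nonnegativity of $x$ — I would point to the fact that each pairing contributes a product of expectations of even powers of the $b_\alpha$'s (in the relevant matched-in-pairs regime, each $b_\alpha$ appears exactly twice, contributing $\E(b_\alpha^2) = 1$, so in fact $x = 1$ for the cases where the lemma is cited) — and the parity observation $e(c) \ne 2$. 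The main (and only real) obstacle is making sure the sign bookkeeping is airtight when $x$ could a priori be less than $1$; once one observes that for the ensembles in question non-crossing configurations contribute a product of $\E(b_\alpha^2)=1$ factors so that $x$ is in fact equal to $1$ (or at worst in $[0,1]$ for general Toeplitz via the palindromic upper bound), the inequality is immediate. I will phrase the final write-up to first reduce to $e(c)\ge 4$, then finish with the one-line inequality $x(2p-1)^{e(c)} \le x(2p-1)^4 \le (2p-1)^4(x-1)+1$.

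\begin{proof}
By Theorem~\ref{thm:weightedcontributions}, if a configuration $c$ contributes $x$ in the unsigned case, then after weighting it contributes $x(2p-1)^{e(c)}$, where $e(c)$ is the number of vertices in crossing pairs. Write $t = 2p-1$; since $p \in [1/2,1]$ we have $t \in [0,1]$.

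If the configuration is non-crossing in the sense that no vertex lies in a crossing pair, then $e(c) = 0$ and the weighted contribution is $x(2p-1)^0 = x$. But for such a configuration $x = 1$ (each $b_\alpha$ appears exactly twice, contributing $\E(b_\alpha^2) = 1$), and $(2p-1)^4(x-1) + 1 = (2p-1)^4 \cdot 0 + 1 = 1 = x$, so the bound holds (with equality).

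Otherwise some vertex lies in a crossing pair, so $e(c) \geq 2$; by the parity observation following the definition of crossing pairs (the number of crossing vertices is always even and never equal to $2$), we have $e(c) \geq 4$. Since $0 \le t \le 1$ and $x \ge 0$ (as $x$ is a product of moments $\E(b_\alpha^{m_\alpha})$ with each $m_\alpha$ even, or, for the general Toeplitz case, is bounded below by $0$ via the palindromic comparison), monotonicity of $t \mapsto t^{e(c)}$ gives
\be
x(2p-1)^{e(c)} \ = \ x\,t^{e(c)} \ \le \ x\,t^{4} \ = \ t^4(x-1) + t^4 \ \le \ (2p-1)^4(x-1) + 1,
\ee
where the last inequality uses $t^4 = (2p-1)^4 \le 1$. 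This completes the proof.
\end{proof}
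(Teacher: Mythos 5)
Your argument establishes only the trivial part of the lemma and misses its actual content, which is the highly palindromic case. Two concrete problems. First, you invoke Theorem \ref{thm:weightedcontributions} to assert that the weighted contribution equals $x(2p-1)^{e(c)}$ for all three ensembles, but that theorem is stated (and its proof, via Lemma \ref{lem:LargeContribution}, only works) for the Toeplitz and singly palindromic Toeplitz ensembles; Lemma \ref{SixthMomentDifferent} shows that precisely in the (doubly) palindromic case the depression is \emph{not} determined by the crossing number alone, so the formula you rely on is unavailable exactly where the lemma has content. Second, your claim that $e(c)=0$ forces $x=1$ (``each $b_\alpha$ appears twice and contributes $\E(b_\alpha^2)=1$'') conflates the contribution of an individual index tuple with the total contribution of the configuration: in the highly palindromic case a single non-crossing configuration corresponds to more index tuples than in the Toeplitz case, because the pairing condition admits the shifts $\mathcal{C}_{r_{jk}}$, so in general $x>1$ (this is why the limiting moments exceed the Gaussian moments). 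Your case $e(c)\ge 4$ is moot, since a non-crossing configuration has $e(c)=0$ by definition, and in any event it again leans on the unavailable theorem.

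The missing idea is a within-configuration decomposition, not an arithmetic inequality in $x$. The paper splits the terms belonging to a non-crossing configuration of a highly palindromic ensemble into those that also contribute for real symmetric matrices (all shifts zero, so every $\epsilon$ appears squared and this block contributes exactly $1$, undepressed) and the remaining terms, which account for $x-1$ of the unsigned contribution. For each of the latter, at least one pair of indices is mismatched relative to the real symmetric matching, and one mismatch forces a second because $\sum_{k}(i_k-i_{k+1})=0$; hence at least four $\epsilon$'s fail to pair and $\E(\epsilon_{i_1 i_2}\epsilon_{i_2 i_3}\cdots\epsilon_{i_{2k} i_1})\le(2p-1)^4$ on this block, giving the bound $(2p-1)^4(x-1)+1$. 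Nothing in your write-up supplies this step; for the Toeplitz and singly palindromic cases the lemma is indeed trivial with $x=1$, as you note, but that is the only part your proof covers.
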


\begin{proof}
In the Toeplitz and singly palindromic Toeplitz cases, $x = 1$ and the claim is trivial. In the highly palindromic case, we note that there is a contribution of $1$ from the terms which also contribute in the real symmetric case. The remaining terms contain at least 2 pairs of vertices which are not matched in the real symmetric case, since one mismatched pair (relative to the real symmetric ensemble) implies a second mismatched pair, since $\sum_{k=1}^{2m} (i_k - i_{k+1}) = 0$. Hence, for these terms, $\E(\epsilon_{i_1 i_2} \epsilon_{i_2 i_3} \cdots \epsilon_{i_{2k} i_1})  \leq (2p-1)^{4}$, which completes the proof.
\end{proof}

\begin{rek}\label{rek:keyforcontributions}
A slightly modified version of this proof shows that for other real symmetric ensembles, if the contribution from a non-crossing configuration was $x$ before the weighting, it is at most $(2p-1)^2(x-1) + 1$ after applying the weighting. Similarly, for crossing configurations, if the contribution was $x$ before the weighting, it is at most $(2p-1)^2 x$ after applying the weighting.
\end{rek}

\begin{lem}\label{SixthMomentDifferent}
For the sixth moment of signed doubly palindromic Toeplitz ensembles, the contribution from a configuration is not determined uniquely by the number of crossings.
\end{lem}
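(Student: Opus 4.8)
The plan is to exhibit two pairings in the sixth moment of a doubly palindromic Toeplitz ensemble that have the same number of crossing vertices but different contributions. Since $2k = 6$, the only configurations with crossings are those where all six vertices are crossing (see Figure \ref{fig:6moments}), so it suffices to find two such ``all-crossing'' sixth-moment configurations whose contributions differ once the doubly palindromic relations among the $b_\alpha$'s are taken into account. First I would write down explicitly the fourth and fifth configurations of Figure \ref{fig:6moments} as pairings $\{(i_1,i_4),(i_2,i_5),(i_3,i_6)\}$ and one of the ``genus-one'' style pairings, and in each case list the three equations of the form $i_j - i_{j+1} = -(i_{j'}-i_{j'+1}) + \mathcal{C}$ coming from the corrected version of equations (2.7)--(2.8) of \cite{JMP}, where now $\mathcal{C}$ ranges over a larger set because the matrix is doubly (rather than singly) palindromic.

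The key point is that for doubly palindromic Toeplitz matrices the constant $\mathcal{C}_{r_{jk}}$ can take values in a set of size depending on the level $n=1$ of palindromicity, in particular values like $\pm N/2$ and $\pm(N-1)$ and $0$, rather than just $\{-(N-1),0,N-1\}$. So the second step is to solve, for each of the two chosen configurations, the resulting linear system in $i_1,\dots,i_6$ modulo these allowed shifts, counting the number of tuples $(i_1,\dots,i_6)$ (to leading order in $N$) that satisfy the pairing relations and hence contribute; equivalently, computing $x(c)$, the unsigned contribution. Because the extra palindromic identifications allow a pairing to be realized by several inequivalent choices of the shift constants, one of the all-crossing configurations will pick up contributions from tuples that the other does not, making its $x(c)$ strictly larger. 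Combined with Theorem \ref{thm:weightedcontributions}'s weight $(2p-1)^{e(c)}$ — which here would give the \emph{same} factor $(2p-1)^6$ for both if it applied — the mismatch in the base contribution $x(c)$ shows the total contribution is not a function of the crossing number alone; in fact the cleanest route is simply to note that Lemma \ref{lem:LargeContribution} fails here (the degree-of-freedom argument that forced $i_r = i_{p+1}$ breaks because of the new nonzero values of $\mathcal{C}$), so even the weighting formula itself is no longer governed purely by crossings.

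Concretely, I would present the argument as: (i) recall that for the sixth moment the only configurations with any crossing have all six vertices crossing; (ii) take the configuration $\{(i_1,i_4),(i_2,i_5),(i_3,i_6)\}$ and another all-crossing configuration, and for the doubly palindromic case enumerate the shift constants $\mathcal{C}$ permitted in each defining equation; (iii) count contributing tuples in each case and observe the counts (the unsigned contributions $x(c)$) differ — this is essentially a finite check, with the two values being explicit small rationals or integers times a power of $N$; (iv) conclude that since the configurations share the same crossing number but the contributions differ, the contribution is not determined by the crossing number. I expect the main obstacle to be bookkeeping the allowed values of the $\mathcal{C}_{r_{jk}}$ correctly for the doubly palindromic structure and verifying that the resulting linear systems genuinely have different solution counts rather than coincidentally equal ones; this is where one must be careful, since a naive count that ignores the palindromic wrap-around identifications would wrongly give equal contributions and hide the phenomenon. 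Once the shift sets are pinned down, the rest is a routine (if slightly tedious) degree-of-freedom count analogous to the proof of Lemma \ref{lem:pairs}.
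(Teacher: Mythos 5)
Your plan does not prove what this lemma is actually asserting, and one of its factual premises is wrong. First, the small error: it is not true that for the sixth moment the only crossing configurations have all six vertices crossing; the third configuration in Figure \ref{fig:6moments} has exactly four crossing vertices. More seriously, the substance of the lemma (as the remark immediately before it makes clear, tying it to the failure of Lemma \ref{lem:LargeContribution}) is that in the doubly palindromic case the \emph{depression factor} coming from the signs --- the value of $\E(\epsilon_{i_1 i_2}\cdots \epsilon_{i_{2k}i_1})$ relative to the unsigned contribution --- is no longer a function of the crossing number $e(c)$ alone. Your proposal instead tries to show that two all-crossing configurations have different \emph{unsigned} contributions $x(c)$. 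Even if true, that establishes nothing about the signed doubly palindromic phenomenon: the $x(c)$'s already differ among configurations with equal $e(c)$ for ordinary Toeplitz matrices, where Theorem \ref{thm:weightedcontributions} nonetheless holds, so your argument would ``prove'' the same statement in a setting where the paper's weighting law is valid. Moreover, you never actually verify the key inequality between the two tuple counts; you only expect it. Your closing aside --- that the degree-of-freedom argument in Lemma \ref{lem:LargeContribution} breaks because of the new values of $\mathcal{C}$ --- is the right instinct, but the failure of a proof is not the failure of its conclusion; you must exhibit concrete terms in which a non-crossing pair of edges nevertheless carries independent sign variables.

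That is exactly what the paper does, and with the two \emph{non-crossing} configurations (both have $e(c)=0$), not the crossing ones. For the adjacent configuration, every contributing term has either all three or exactly one pairing of the form $a_{ij}a_{ji}$ (the allowed shifts being $0$, $N/2$, $N/2-1$, with $N$ and $N-1$ ruled out by a degree-of-freedom count), which forces the signed contribution to be exactly $(2p-1)^4(x-1)+1$. For the non-adjacent non-crossing configuration, Lemma \ref{lem:E(c)=0} gives the upper bound $(2p-1)^4(x-1)+1$, and one then exhibits an order-$N^4$ family of contributing terms, e.g.\ $a_{i,j}\,a_{j,i+N/2}\,a_{i+N/2,k+N/2}\,a_{k+N/2,l}\,a_{l,k}\,a_{k,i}$ with $i,k<N/2$, in which all six $\epsilon$'s are independent, so a positive proportion of the contribution is depressed by $(2p-1)^6<(2p-1)^4$; hence that configuration's signed contribution is strictly smaller. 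Two configurations with the same number of crossings (zero) thus have different signed contributions, which is the claim. If you want to salvage your route, you would have to recast it so that the comparison isolates the $p$-dependence of the sign expectation rather than the unsigned counts, which in effect brings you back to the paper's argument.
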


\begin{proof}
We prove that the adjacent configuration and the non-adjacent non-crossing configuration (the upper-left and upper-middle configurations in Figure 1, respectively) have different contributions to the sixth moment.

The main idea is that in the `adjacent configuration', every contributing term has either all three pairings of the form $a_{ij}a_{ji}$, or exactly one pairing of this form. Since we know that the contribution when all three pairings are of this form is $1$, the contribution when there is exactly one pairing of this form is $(x-1)$. In this situation, the contribution to the moment is weighted by $(2p - 1)^4$, giving a total of $(2p - 1)^4 (x-1) + 1$.

Specifically, we have that \be
i_t - i_{t+1}\ =\ -(i_{t+1} - i_{t+2}) \pm \mathcal{C}_{r_{t,t+1}},
\ee
where $\mathcal{C}_{r_{t,t+1}} = N/2$ or $N/2 - 1$ or $0$. ($N$ and $N-1$ are ruled out because we would lose a degree of freedom by forcing one value to be $1$ and the other to be $N$.) Moreover, $\mathcal{C}_{r_{t,t+1}} = 0$ if and only if $\epsilon_{i_t i_{t + 1}} = \epsilon_{i_{t + 1} i_{t + 2}}$. Now, if we choose three values from $\{0, \pm N/2, \pm N/2 - 1\}$ that add up to $0$, we must choose either one or three of the values to be $0$. The cases where all three are $0$ contribute fully while the case where two are non-zero is depressed by $(2p - 1)^4$, so that contribution to the moment in the signed ensemble is exactly $(2p - 1)^4 (x-1) + 1$.

In the other non-crossing configuration, the moment is at most $(2p - 1)^4 (x-1) + 1$ by the proof of Lemma \ref{lem:E(c)=0}. Hence, to show the moment is smaller than this, it will suffice to find a contributing group of terms whose moment is reduced by more than $(2p - 1)^4$. As one example, we can take the vertices to be $a_{i,j}, a_{j,i + N/2}, a_{i + N/2,k + N/2}, a_{k + N/2,l}, a_{l,k}, a_{k,i}$, where $i, k < N/2$. While there is an additional inequality between $i$ and $j$ and between $k$ and $l$, this does not remove a degree of freedom since there are still order $N$ possible values. Hence, some portion of the $(x-1)$ contribution is reduced by a factor of $(2p - 1)^6 < (2p - 1)^4$. Since the remaining portion of the contribution is reduced to at most $(2p - 1)^4$ times its original value, the contribution to the 6\textsuperscript{th} moment of the non-adjacent non-crossing configuration in the signed doubly palindromic case is strictly less than $(2p - 1)^4(x - 1) + 1$, and is therefore not equal to the contribution from the adjacent non-crossing configuration.
\end{proof}



\subsection{Counting Crossing Configurations}\label{sec:countingcrossingconfigs}

Theorem \ref{thm:weightedcontributions} reduces the determination of the moments to counting the number of pairings with a given contribution $x(c)$, and then weighting those by $(2p-1)^{e(c)}$, where $e(c)$ is the number of vertices involved in crossings in the configuration. As remarked above, in the singly palindromic Toeplitz case each $x(c) = 1$, while in the general Toeplitz case we only have bounds on the $x(c)$'s, and thus must leave these as parameters in the final answer (though any specific $x(c)$ may be computed by brute force, we do not have a closed form expression in general).

In this section we turn to computing the $e(c)$'s for various configurations. As previously mentioned, these and similar numbers have also been studied in knot theory where these chord diagrams are used in the study of Vassiliev invariants (see \cite{KT,Kont,FN,Rio,Sto}). While we cannot determine exact formulas in general, we are able to solve many special cases, which we now describe.

\begin{defi}[${\rm Cr}_{2k, 2m}$] Let ${\rm Cr}_{2k, 2m}$ denote the number of pairings involving $2k$ vertices where exactly $2m$ vertices are involved in a crossing.
\end{defi}

Let $C_k = \frac1{k+1} \ncr{2k}{k}$ denote the $k$\textsuperscript{th} Catalan number (see \cite{AGZ} for statements and proofs of their needed properties). One of its many definitions is as the number of ways to match $2k$ objects on a circle in pairs without any crossings; this interpretation is the reason why Wigner's Semi-Circle Law holds. Thus, we immediately deduce the following.

\begin{lem}\label{lem:catalanlem} We have $\ccr{2k}{0} = C_k$. \end{lem}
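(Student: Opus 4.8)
The plan is to recall that the $k$th Catalan number $C_k$ counts non-crossing perfect matchings of $2k$ points arranged on a circle, and then observe that the quantity $\ccr{2k}{0}$ — the number of pairings of $2k$ vertices in which exactly $0$ vertices are involved in any crossing — is, by definition, exactly the number of non-crossing pairings. So the two enumerations coincide and the lemma follows immediately. The only substantive point to make is that a pairing has $e(c) = 0$ (no crossing vertices) if and only if it is non-crossing in the usual sense.

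Concretely, I would argue as follows. First, recall the combinatorial fact (standard; see \cite{AGZ}) that the number of ways to partition $2k$ points on a circle into $k$ pairs so that the chords joining paired points are pairwise non-intersecting is $C_k = \frac{1}{k+1}\ncr{2k}{k}$. Next, unwind the definition: by the definition of ${\rm Cr}_{2k,2m}$, the number $\ccr{2k}{0}$ counts pairings of the $2k$ vertices in which the number of vertices lying in a crossing pair is zero. A vertex lies in a crossing pair precisely when the chord through it crosses some other chord (using the "Crossing, non-crossing" definition from \S\ref{sec:determiningmoments}); hence no vertex is in a crossing pair if and only if no two chords cross, i.e. the pairing is a non-crossing matching. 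Therefore $\ccr{2k}{0}$ equals the number of non-crossing matchings of $2k$ points on a circle, which is $C_k$.

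I expect essentially no obstacle here — this is a direct translation of a known count. The only thing worth being slightly careful about is the logical equivalence "no crossing vertices $\iff$ non-crossing pairing": if even one chord crosses another, then it contributes (at least) two crossing vertices (in fact at least four, since crossings come in pairs of chords each contributing both endpoints), so $e(c) > 0$; conversely if $e(c) = 0$ then by definition no chord participates in a crossing, so the matching is non-crossing. This matches the earlier remark in the text that the number of crossing vertices is always even and never equal to two. With that equivalence in hand, the lemma is immediate from the Catalan interpretation.
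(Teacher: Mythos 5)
Your proposal is correct and matches the paper's own (essentially one-line) argument: the paper likewise deduces the lemma immediately from the standard fact that $C_k$ counts non-crossing matchings of $2k$ points on a circle, since $\ccr{2k}{0}$ by definition counts exactly those pairings with no vertex in a crossing. Your extra care in noting the equivalence between ``no crossing vertices'' and ``non-crossing pairing'' is a fine touch but does not change the route.
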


We use this result to prove the following theorem, which is instrumental in the counting we need to do.

\begin{thm}\label{thm:2kCk-v} Consider $2k$ vertices on the circle, with a partial pairing on a subset of $2v$ vertices. The number of ways to place the remaining $2k-2v$ vertices in non-crossing, non-dividing pairs is $\binom{2k}{k-v}$.
\end{thm}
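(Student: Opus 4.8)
The plan is to count, over all $\binom{2k}{2v}$ choices of which $2v$ of the $2k$ circular positions carry the pre-specified partial pairing, the number of ways to fill in the remaining $2k-2v$ vertices with non-crossing, non-dividing pairs, and then to exploit the fact (which I will argue) that this count does \emph{not} depend on the partial pairing nor even on \emph{which} $2v$ positions were chosen — it depends only on $k$ and $v$. Granting that independence, the left-hand side of the desired identity equals $\binom{2k}{2v}^{-1}$ times a sum that telescopes, but it is cleaner to set it up as a direct recursion. Write $f(k,v)$ for the quantity in the statement. The key structural observation is that the $2k-2v$ new vertices, being placed in non-crossing non-dividing pairs that moreover cannot cross any chord of the partial pairing, must each lie within a single ``arc'' cut out by the partial pairing together with the requirement of non-dividingness; so $f(k,v)$ is built out of Catalan-type contributions on the gaps between the $2v$ fixed vertices.

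The cleanest route I would take is induction on $v$. The base case $v=0$ is exactly Lemma~\ref{lem:catalanlem} together with the observation that for $v=0$ there are no crossing pairs at all, hence no dividing pairs, so ``non-crossing non-dividing'' is just ``non-crossing,'' giving $C_k = \binom{2k}{k}/(k+1)$; but $\binom{2k}{k-0}=\binom{2k}{k}$, so I must be slightly careful — in fact for $v=0$ the theorem as I read it should be interpreted with the understanding that the statement is about the generating-function identity whose closed form is $\binom{2k}{k-v}$, and one checks $v=0$ separately or absorbs the $1/(k+1)$ discrepancy into the precise meaning of ``non-dividing'' when there are no crossings elsewhere. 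Assuming the base case is pinned down, for the inductive step I would remove one chord of the partial pairing: say it joins two of the $2v$ fixed vertices and cuts the circle into two arcs containing $2a$ and $2b$ fixed vertices among the remaining $2v-2$, with $a+b=v-1$, and it cuts the ``free budget'' of $2k-2v$ slots according to how many extra positions lie in each arc. Summing the product of the two sub-counts over all ways to distribute the free vertices, and invoking the induction hypothesis $f(\cdot,a)=\binom{2\cdot}{\cdot-a}$ on each arc, reduces the whole thing to a Vandermonde-type convolution of binomial coefficients, which collapses to $\binom{2k}{k-v}$.

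An alternative, and probably smoother, approach is a bijective or lattice-path one: interpret a pairing of $2k$ vertices in which a designated $2v$ of them are "frozen" and the rest are non-crossing non-dividing as a ballot-type object, and exhibit an explicit bijection with the set of lattice paths of length $2k$ counted by $\binom{2k}{k-v}$ (these are the paths from $0$ to $2v$, or equivalently the "$v$-dominated" Dyck-like paths). The Catalan case $v=0$ is then the classical Dyck path bijection, and the general $v$ case is the standard reflection-principle count $\binom{2k}{k-v}$ for paths with a fixed net displacement. I would phrase the bijection by reading around the circle and recording, at each vertex, whether it opens a new free pair, closes one, or is frozen, checking that the non-dividing condition translates exactly into the constraint that the running "height" never returns to zero in a way that would isolate a crossing block — i.e.\ the frozen vertices play the role of the unmatched up-steps.

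The main obstacle I anticipate is \emph{not} the binomial bookkeeping but rather pinning down precisely why the answer is independent of the partial pairing and of the chosen $2v$ positions, and correctly incorporating the ``non-dividing'' clause — this is exactly the subtlety that the paper flags (dividing pairs first appear at $2k=10$). Concretely, one must verify that a free non-crossing pair is automatically non-dividing once we forbid it from crossing any chord of the partial pairing, \emph{unless} it would separate two crossing blocks; handling that exceptional case correctly is where a naive Catalan count would overcount, and it is presumably why the final answer is the ``central-ish'' binomial $\binom{2k}{k-v}$ rather than a product of Catalan numbers. I would therefore spend the bulk of the write-up isolating this point as a sub-lemma (``a free pair can never be dividing''), after which either the induction or the lattice-path bijection goes through routinely.
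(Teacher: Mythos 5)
Your opening structural observation---that the non-crossing, non-dividing free pairs must live entirely inside the gaps cut out by the partial pairing, so the count is built from Catalan contributions per gap---is exactly the first step of the paper's proof, but the proposal never completes that route and the alternatives you sketch have genuine gaps. First, the claimed independence is false: if you literally fix which $2v$ of the $2k$ positions carry the partial pairing, the number of completions depends strongly on the gap sizes (for $2k=6$, $v=1$, a frozen chord on two adjacent vertices admits $2$ completions while a ``diameter'' admits only $1$), so there is no constant to extract from a $\binom{2k}{2v}$-average. The quantity $\binom{2k}{k-v}$ counts placements \emph{including} the choice of how many free vertices fall in each gap, i.e., the interleaving of the $2v$ vertices among the $2k$ positions, and this positional/rotational bookkeeping is where the work lies. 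Second, your induction has no valid base case: as you yourself note, $v=0$ gives $C_k\neq\binom{2k}{k}$, and this cannot be ``absorbed into the meaning of non-dividing''; the statement is meant for $v\ge 1$, and the discrepancy is precisely the rotational factor you must track. The inductive step is also unsound: deleting one chord of the partial pairing does not split the circle into two independent arcs, because in the intended applications the partial chords cross one another (and deleting a dividing chord changes the status of the others), and the convolution actually needed is not Vandermonde but the $2v$-fold Catalan self-convolution $\sum_{s_1+\cdots+s_{2v}=k-v}C_{s_1}\cdots C_{s_{2v}}=\frac{2v}{2k}\binom{2k}{k-v}$, which you never establish. Finally, your proposed sub-lemma ``a free pair can never be dividing'' is false: when the partial pairing contains two or more crossing blocks, a free non-crossing chord separating them is dividing, and excluding exactly those chords is what forces every free vertex to pair inside its own gap; the non-dividing hypothesis does real work and cannot be discharged as vacuous.

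For comparison, the paper's proof is short and direct along the line you started: each of the $2v$ gaps must be matched internally and non-crossingly (Lemma \ref{lem:catalanlem} gives a factor $C_{s_j}$ per gap of size $2s_j$), a configuration is specified by one index together with the composition $(s_1,\dots,s_{2v})$, one divides by $2v$ for the cyclic relabelling to get $\frac{2k}{2v}\sum C_{s_1}\cdots C_{s_{2v}}$, and the Catalan convolution identity collapses this to $\binom{2k}{k-v}$. Your lattice-path idea could in principle give a genuinely different proof (the convolution identity is itself a cycle-lemma/ballot statement), but as written it defers exactly the two points that need proof: the correct translation of the non-dividing condition and the cyclic counting.
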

\begin{proof} Let $\mathcal{W}$ denote the desired quantity. Notice that each of the remaining $2k-2v$ vertices must be placed between two of the $2v$ already paired vertices on the circle. These $2v$ vertices have created $2v$ regions. A necessary and sufficient condition for these $2k-2v$ vertices to be in non-crossing, non-dividing pairs is that the vertices in each of these $2v$ regions pair only with other vertices in that region in a non-crossing configuration.

Thus, if there are $2s$ vertices in one of these regions, by Lemma \ref{lem:catalanlem} the number of valid ways they can pair is $C_{2s}$. As the number of valid matchings in each region depends only on the number of vertices in that region and not on the matchings in the other regions, we obtain a factor of $C_{2s_{1}}C_{2s_{2}}\cdots C_{2s_{2v}}$, where $2s_{1}+2s_{2}+\cdots+2s_{2v} = 2k-2v$.

We need only determine how many pairings this factor corresponds to. First we notice that by specifying one index and $\left(s_{1}, s_{2}, \ldots, s_{2v}\right)$, we have completely specified a pairing of the $2k$ vertices. However, as we are pairing on a circle, this specification does not uniquely determine a pairing since the labelling of $\left(s_{1}, s_{2}, \ldots, s_{2v}\right)$ is arbitrary. Each pairing can in fact be written as any of the $2v$ circular permutations of some choice of $\left(s_{1}, s_{2}, \ldots, s_{2v}\right)$ and one index. Thus the quantity we are interested in is
\be\label{eq:catconv}\mathcal{W} \ = \
\frac{2k}{2v}\sum_{2s_{1}+2s_{2}+\cdots+2s_{2v} = 2k-2v}C_{s_{1}}C_{s_{2}}\cdots C_{s_{2v}}.
\ee

To evaluate this expression, we use the $k$-fold self-convolution identity of Catalan numbers \cite{Fo,Reg}, which states
\be
\sum_{i_{1}+\cdots+i_{r} = n} C_{i_{r}-1}\cdots C_{i_{r}-1} \ = \ \frac{r}{2n-r}\binom{2n-r}{n}.
\ee
Setting $i_{j} = s_{j} + 1$, $r = 2v$ and $n = k+v$, we obtain
\be
\sum_{s_{1}+s_{2}+\cdots+s_{2v} + 2v = k+v} C_{s_{1}}C_{s_{2}}\cdots C_{s_{2v}} \ = \ \frac{2v}{2k}\binom{2k}{k+v}.
\ee
We may rewrite this as
\be
\frac{2k}{2v}\sum_{2s_{1}+2s_{2}+\cdots+2s_{2v} = 2k-2v} C_{s_{1}}C_{s_{2}}\cdots C_{s_{2v}} \ = \ \binom{2k}{k-v},
\ee which completes the proof as the left hand side is just \eqref{eq:catconv}.
\end{proof}

Given Theorem \ref{thm:2kCk-v}, our ability to find formulas for ${\rm Cr}_{2k, 2m}$ rests on our ability to find the number of ways to pair $2v$ vertices where $2m$ vertices are crossing and $2v-2m$ vertices are dividing. We are able to do this for small values of $m$, but for large $m$, the combinatorics becomes very involved.

\begin{defi}[${\rm P}_{2k, 2m, i}$, partitions] Let ${\rm P}_{2k, 2m, i}$ represent the number of pairings of $2k$ vertices with $2m$ crossing vertices in $i$ partitions. We define a partition to be a set of crossing vertices separated from all other sets of crossing vertices by at least one dividing edge.
\end{defi}

It takes a minimum of $4$ vertices to form a partition, so the maximum number of partitions possible is $\lfloor 2m/4 \rfloor$. Our method of counting involves writing
\begin{equation}\label{eq:sumpartitions}
{\rm Cr}_{2k, 2m} \ = \ \sum_{i=1}^{\lfloor 2m/4\rfloor}{\rm P}_{2k, 2m, i}.
\end{equation}

Our first combinatorial result is the following.

\begin{lem}\label{lem:p1} We have
\be
{\rm P}_{2k, 2m, 1} \ = \ {\rm Cr}_{2m, 2m} \binom{2k}{k-m}.
\ee
\end{lem}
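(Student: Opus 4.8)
The plan is to decompose a pairing counted by ${\rm P}_{2k, 2m, 1}$ into two pieces of data: the combinatorial ``shape'' of the $2m$ crossing vertices, and the placement of the remaining $2k-2m$ vertices in non-crossing, non-dividing pairs around them. Since we are in the case $i=1$, all $2m$ crossing vertices lie in a single partition, i.e., they form one block not separated from itself by any dividing edge. First I would observe that such a pairing is obtained by first choosing a pairing of $2m$ vertices in which \emph{all} $2m$ are crossing and which constitutes a single partition — but note that when all vertices of a sub-pairing are crossing there are no dividing edges at all within it, so the ``single partition'' condition is automatic, and the number of such sub-pairings is exactly ${\rm Cr}_{2m, 2m}$.

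Next I would invoke Theorem~\ref{thm:2kCk-v}. Place the $2m$ crossing vertices on the circle of $2k$ vertices; this is a partial pairing on a subset of $2v = 2m$ vertices. Theorem~\ref{thm:2kCk-v} then says the number of ways to fill in the remaining $2k-2m$ vertices in non-crossing, non-dividing pairs is $\binom{2k}{k-m}$. The key point to check is that this construction is a bijection onto the pairings counted by ${\rm P}_{2k, 2m, 1}$: given such a pairing, restricting to its set of crossing vertices yields one of the ${\rm Cr}_{2m,2m}$ ``all-crossing'' sub-pairings (after relabeling the $2m$ vertices cyclically in order), and the complementary vertices are by definition in non-crossing, non-dividing pairs, so they are accounted for by Theorem~\ref{thm:2kCk-v}; conversely, any such choice produces a pairing with exactly $2m$ crossing vertices all in one partition. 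Multiplying the two independent counts gives ${\rm P}_{2k, 2m, 1} = {\rm Cr}_{2m, 2m}\binom{2k}{k-m}$.

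The main obstacle I expect is verifying carefully that adding non-crossing, non-dividing pairs neither creates new crossings among the original $2m$ vertices nor, crucially, turns some of the originally non-crossing ``filler'' vertices into crossing ones, and that no dividing edge is introduced that would split the $2m$ crossing vertices into more than one partition. That the filler pairs are non-crossing is part of the hypothesis of Theorem~\ref{thm:2kCk-v}; that they are non-dividing is also built in, which is exactly what prevents the single block of $2m$ crossing vertices from being separated into several partitions. One should also confirm the converse direction: in a pairing with $2m$ crossing vertices forming one partition, the remaining vertices cannot be crossing (or they'd be among the $2m$) and cannot be dividing (a dividing edge would need crossing pairs on both sides, which is possible, so here one must use that $i=1$: a dividing edge among the filler would, together with the single block, still be consistent — so the real content is that the \emph{definition} of ${\rm P}_{2k,2m,1}$ counts configurations where the crossing vertices form one partition, and filler dividing edges, if any, lie entirely to one side, hence do not separate crossing vertices). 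Sorting out this bookkeeping about where dividing edges can sit relative to the single crossing block is the delicate step; once it is settled, the count is an immediate product of ${\rm Cr}_{2m,2m}$ and the binomial coefficient from Theorem~\ref{thm:2kCk-v}.
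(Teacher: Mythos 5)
Your proposal is correct and takes essentially the same route as the paper: decompose the pairing into an all-crossing sub-pairing of the $2m$ vertices (counted by ${\rm Cr}_{2m,2m}$) and a filler of $2k-2m$ vertices in non-crossing, non-dividing pairs, counted by $\binom{2k}{k-m}$ via Theorem~\ref{thm:2kCk-v}. One small correction to your bookkeeping on the "delicate step": a filler dividing edge can never "lie entirely to one side" of the crossing block, since by definition a dividing edge must have crossing pairs on \emph{both} sides, so $i=1$ forces the configuration to have no dividing edges at all --- which is exactly the simplification the paper invokes to make the product count immediate.
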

\begin{proof}
The proof follows immediately from Theorem \ref{thm:2kCk-v}. If there is only one partition, then there can be no dividing edges. Therefore, we simply multiply the number of ways we can choose $2k-2m$ non-crossing non-dividing pairs by the number of ways to then choose how the $2m$ crossing vertices are paired.
\end{proof}

Our next result is

\begin{lem}\label{lem:p2} We have
\begin{equation}
{\rm P}_{2k, 2m, 2} \ = \  \sum_{d=1}^{k-m}\binom{2k}{k-m-d}\left(m+d\right)\left(\sum_{0 < a < m} {\rm Cr}_{2a, 2a} {\rm Cr}_{2m-2a, 2m-2a}\right).
\end{equation}
\end{lem}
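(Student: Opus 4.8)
The plan is to decompose a pairing counted by ${\rm P}_{2k, 2m, 2}$ into its structural pieces and count each piece using Theorem \ref{thm:2kCk-v} together with the recursive structure of a single partition. A pairing with $2m$ crossing vertices distributed among exactly two partitions is determined by: (i) how the $2m$ crossing vertices split between the two partitions, say $2a$ in one and $2m-2a$ in the other with $0<a<m$; (ii) the internal crossing pattern within each partition, contributing ${\rm Cr}_{2a,2a}$ and ${\rm Cr}_{2m-2a,2m-2a}$ respectively; (iii) the dividing edges that separate the two partitions and isolate them from the non-crossing non-dividing part; and (iv) the placement of the remaining $2k-2m-(\text{number of dividing vertices})$ vertices in non-crossing non-dividing pairs. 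First I would fix the split parameter $a$ and the count of dividing edges; if there are $d$ dividing edges then they use up $2d$ vertices, and Theorem \ref{thm:2kCk-v} says the remaining vertices can be placed in non-crossing non-dividing pairs in $\binom{2k}{k-m-d}$ ways once the $2m+2d$ ``skeleton'' vertices are fixed in position.

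Next I would account for the combinatorial factor $(m+d)$. The idea is that with two partitions present, the dividing edges (there are $d$ of them) must be arranged so as to cut the circle into regions, with the two crossing partitions landing in two different regions; the factor $m+d$ should count the number of inequivalent ways to interleave the $d$ dividing edges with the two blocks of crossing vertices around the circle, up to the rotational equivalence defining a configuration — more precisely, it counts placements of the second partition and the dividing edges relative to the first. I would make this precise by the same circular-permutation bookkeeping used in the proof of Theorem \ref{thm:2kCk-v}: one specifies a linear arrangement and then divides by the size of the cyclic symmetry, and the arithmetic collapses to $m+d$. Summing over $d$ from $1$ to $k-m$ (we need at least one dividing edge to have two partitions, and at most $k-m$ since each dividing edge and the crossing blocks consume vertices) and over the split $a$ gives exactly
\begin{equation}
{\rm P}_{2k, 2m, 2} \ = \ \sum_{d=1}^{k-m}\binom{2k}{k-m-d}\left(m+d\right)\left(\sum_{0 < a < m} {\rm Cr}_{2a, 2a} {\rm Cr}_{2m-2a, 2m-2a}\right),
\end{equation}
where the inner sum is pulled out of the $d$-sum because the internal structure of the two partitions is independent of the number of dividing edges.

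The main obstacle I anticipate is rigorously justifying the factor $(m+d)$ — that is, correctly counting the number of ways to position the two crossing blocks and the $d$ dividing edges around the circle without over- or under-counting, and reconciling this with the configuration-level (rotational) equivalence. One has to be careful that a dividing edge is, by definition, a non-crossing edge each of whose two sides contains a full crossing partition, so the $d$ dividing edges are ``nested'' around the two blocks in a constrained way; enumerating these nestings and then checking that the circular relabeling identifies precisely the expected orbits is the delicate part. A secondary subtlety is confirming that the remaining $2k-2m-2d$ vertices, once the skeleton of crossing blocks and dividing edges is laid down, really do see $2(m+d)$ ``gaps'' in the sense required by Theorem \ref{thm:2kCk-v}, so that the count $\binom{2k}{k-m-d}$ applies verbatim with $v = m+d$; this amounts to checking that no additional dividing edges are accidentally created, which is where the non-crossing non-dividing requirement of that theorem does the work.
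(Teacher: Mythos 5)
Your decomposition is the same as the paper's: sum over the number $d$ of dividing edges, count the non-crossing non-dividing completions via Theorem \ref{thm:2kCk-v} with $v=m+d$ (giving $\binom{2k}{k-m-d}$), and multiply by $\sum_{0<a<m}{\rm Cr}_{2a,2a}{\rm Cr}_{2m-2a,2m-2a}$ for the internal structure of the two crossing blocks. The genuine gap is exactly the one you flag yourself: the factor $(m+d)$ is never established, and the heuristic you propose for it points the wrong way. With exactly two partitions, every dividing chord must separate partition $A$ from partition $B$, and since dividing chords cannot cross anything, the skeleton of $2(m+d)$ vertices is forced into the shape: block $A$, then one endpoint of each of the $d$ dividing chords, then block $B$, then the other $d$ endpoints in reverse (nested/parallel) order. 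Consequently, ``placements of the second partition and the dividing edges relative to the first'' gives only the choice of how many dividing endpoints sit on each side of $B$, i.e.\ $d+1$ possibilities --- not $m+d$ --- so that route would fail.

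The correct bookkeeping, and what the paper actually does, is rotational: for a fixed \emph{ordered} choice of the two blocks (sizes $2a$, $2m-2a$ and their internal crossing patterns), the rigid skeleton above can be placed on its circle of $2(m+d)$ skeleton vertices in $2(m+d)$ rotational positions when the two blocks are distinguishable, but in only $m+d$ positions when $a=m-a$ and the two internal patterns coincide, since rotation by $m+d$ then fixes the pairing. Because the sum $\sum_{0<a<m}$ is over ordered splits and the product ${\rm Cr}_{2a,2a}{\rm Cr}_{2m-2a,2m-2a}$ is over ordered pairs of patterns, every unordered configuration is counted twice except the fully symmetric ones, and the two cases collapse to the uniform coefficient $(m+d)$; this is the content of the paper's remark that there are $m+d$ spots for the dividing structure when $a=b$ and $2m+2d$ when $a\neq b$, with each $a\neq b$ appearing twice in the sum. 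Your secondary concern --- that the $2k-2m-2d$ remaining vertices see $2(m+d)$ gaps and create no new dividing edges --- is indeed handled verbatim by Theorem \ref{thm:2kCk-v}; it is the $(m+d)$ count, not that step, that your write-up leaves unproved.
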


\begin{proof}
We let $d$ be the number of dividing edges. In order to have two partitions, at least one of the $k-m$ non-crossing edges must be a dividing edge. We thus sum over $d$ from $1$ to $k-m$. Given $d$, we know that we can pair and place the non-crossing non-dividing edges in $\binom{2k}{k-m-d}$ ways from Theorem \ref{thm:2kCk-v}. We then choose a way to pair the $2m$ crossing vertices into $2$ partitions, one with $2a$ vertices, the other with $2b$ vertices. If $a = b$, there are $m+d$ distinct spots where we may place the dividing edge. If $a \neq b$, there are $2m+2d$ spots. Since each choice of $a \neq b$ appears twice in the above sum, the result follows.
\end{proof}

Determining ${\rm P}_{2k, 2m, 3}$ requires the analysis of several more cases, and we were unable to find a nice way to generalize the results of Lemmas \ref{lem:p1} and \ref{lem:p2}. However, these two results do allow us to write down the following formulas.

\begin{lem}\label{lem:crossingformulas} We have
\bea\label{eq:Cr2k4}
{\rm Cr}_{2k, 4} & \ = \ & \binom{2k}{k-2} \nonumber\\
{\rm Cr}_{2k, 6} & \ = \ & 4\binom{2k}{k-3} \nonumber\\
{\rm Cr}_{2k, 8} & \ = \ & 31\binom{2k}{k-4}+\sum_{d=1}^{k-4}\binom{2k}{k-4-d}\left(4+d\right) \nonumber\\
{\rm Cr}_{2k, 10} & \ = \ & 288\binom{2k}{k-5}+8\sum_{d=1}^{k-5}\binom{2k}{k-5-d}\left(5+d\right).
\eea
\end{lem}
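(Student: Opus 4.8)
The plan is to derive each of the four formulas from the decomposition \eqref{eq:sumpartitions}, ${\rm Cr}_{2k, 2m} = \sum_{i=1}^{\lfloor 2m/4\rfloor}{\rm P}_{2k, 2m, i}$, using Lemmas \ref{lem:p1} and \ref{lem:p2} together with the small ``base cases'' ${\rm Cr}_{2m, 2m}$ (the number of pairings of $2m$ vertices with \emph{all} $2m$ vertices crossing) for $m = 2, 3, 4, 5$. So the first step is to pin down these base cases. For $m=2$: among the $3!! = 3$ pairings of $4$ vertices, exactly one is the fully-crossed diagram, so ${\rm Cr}_{4,4} = 1$. For $m = 3$: among the $5!! = 15$ pairings of $6$ vertices, one checks (e.g.\ via Figure \ref{fig:6moments}, where the last two configurations have all six vertices crossing, with rotational multiplicities $3$ and $1$) that ${\rm Cr}_{6,6} = 4$. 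For $m = 4$ and $m = 5$ one computes ${\rm Cr}_{8,8}$ and ${\rm Cr}_{10,10}$ by a direct (finite) enumeration of fully-crossed chord diagrams on $8$ and $10$ vertices; these are the numbers that will turn out to be $31$ and $288$ respectively, and they can be cross-checked against the OEIS tabulations mentioned in the paper.

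Next I would assemble the formulas. For $2m = 4$ and $2m = 6$ there is only one partition possible (since $\lfloor 4/4 \rfloor = \lfloor 6/4 \rfloor = 1$), so $\eqref{eq:sumpartitions}$ gives ${\rm Cr}_{2k,2m} = {\rm P}_{2k,2m,1} = {\rm Cr}_{2m,2m}\binom{2k}{k-m}$ by Lemma \ref{lem:p1}; plugging in ${\rm Cr}_{4,4} = 1$ and ${\rm Cr}_{6,6} = 4$ yields $\binom{2k}{k-2}$ and $4\binom{2k}{k-3}$. For $2m = 8$ and $2m = 10$ we have $\lfloor 2m/4 \rfloor = 2$, so ${\rm Cr}_{2k,2m} = {\rm P}_{2k,2m,1} + {\rm P}_{2k,2m,2}$. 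Lemma \ref{lem:p1} contributes ${\rm Cr}_{2m,2m}\binom{2k}{k-m}$, i.e.\ $31\binom{2k}{k-4}$ resp.\ $288\binom{2k}{k-5}$. Lemma \ref{lem:p2} contributes $\sum_{d=1}^{k-m}\binom{2k}{k-m-d}(m+d)\left(\sum_{0<a<m}{\rm Cr}_{2a,2a}{\rm Cr}_{2m-2a,2m-2a}\right)$; the inner ``splitting factor'' $\sum_{0<a<m}{\rm Cr}_{2a,2a}{\rm Cr}_{2m-2a,2m-2a}$ must be evaluated. For $m = 4$ the only valid split is $a \in \{2\}$ wait—more carefully, we need each partition to have at least $4$ vertices, so $a$ ranges over $\{2\}$ when $m=4$ giving ${\rm Cr}_{4,4}{\rm Cr}_{4,4} = 1$; this yields the term $\sum_{d=1}^{k-4}\binom{2k}{k-4-d}(4+d)$, matching the claimed formula. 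For $m = 5$ the valid split has one partition of size $4$ and one of size $6$, i.e.\ $a \in \{2, 3\}$, contributing ${\rm Cr}_{4,4}{\rm Cr}_{6,6} + {\rm Cr}_{6,6}{\rm Cr}_{4,4} = 4 + 4 = 8$, which gives $8\sum_{d=1}^{k-5}\binom{2k}{k-5-d}(5+d)$, again matching. This completes all four identities.

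I expect the main obstacle to be the explicit enumeration of the fully-crossed chord diagrams for ${\rm Cr}_{8,8} = 31$ and ${\rm Cr}_{10,10} = 288$; unlike the $m = 2, 3$ cases these are not small enough to list by hand comfortably, and one must be careful that ``all $2m$ vertices crossing'' is interpreted correctly (every chord participates in at least one crossing with some other chord) and that no configuration is double-counted or silently treated as having a nontrivial partition. A clean way to organize this is to count, for each $2m \in \{8,10\}$, all $(2m-1)!!$ pairings, subtract those with a non-crossing vertex and those that split into $\geq 2$ partitions (the latter already handled by ${\rm P}$ with $i \geq 2$ and, for $2m = 8, 10$, reducible to products of the already-known ${\rm Cr}_{4,4}$, ${\rm Cr}_{6,6}$), leaving the single-partition fully-crossed count. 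A secondary subtlety is verifying the bookkeeping in Lemma \ref{lem:p2} as applied here — in particular that for $m = 4$ the equal-size split $a = b = 2$ is the one contributing the factor $(m+d) = (4+d)$ rather than $(2m+2d)$ — but this is exactly the $a = b$ case spelled out in the proof of Lemma \ref{lem:p2}, so it requires only careful citation rather than new argument.
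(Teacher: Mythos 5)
Your proposal is correct and follows essentially the same route as the paper: the same decomposition \eqref{eq:sumpartitions} with Lemmas \ref{lem:p1} and \ref{lem:p2}, the same convolution factors ($1$ for $m=4$ since ${\rm Cr}_{2,2}=0$, and $8$ for $m=5$), and the same handling of the $a=b$ versus $a\neq b$ splits. The only divergence is that you propose getting ${\rm Cr}_{8,8}=31$ and ${\rm Cr}_{10,10}=288$ by brute-force enumeration (with an OEIS cross-check), whereas the paper obtains them by complementary counting via $\sum_{m}{\rm Cr}_{2k,2m}=(2k-1)!!$ (equation \eqref{eq:totalcrossingconfigs}) together with the formulas already proven for smaller $m$ -- which is precisely the ``clean organization'' you sketch at the end, so no genuinely new argument separates the two.
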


\begin{proof}
We recall that
\bea\label{eq:initcond}
{\rm Cr}_{2k, 0} & \ = \ & C_{k} \nonumber\\
{\rm Cr}_{2k, 2} & \ = \ & 0,
\eea
where the second equation follows from the fact that at least $4$ vertices are needed for a crossing. From \eqref{eq:sumpartitions} and \eqref{lem:p1} we find
\begin{equation}
{\rm Cr}_{2k, 4}\ =\ {\rm P}_{2k, 4, 1}\ =\ {\rm Cr}_{4, 4}\binom{2k}{k-2}.
\end{equation}

We can calculate ${\rm Cr}_{4, 4}$ by using \eqref{eq:initcond} and the fact that
\be\label{eq:totalcrossingconfigs}
\sum_{m = 0}^{k} {\rm Cr}_{2k, 2m}\ =\ \left(2k-1\right)!!.
\ee This follows because the number of ways to match $2k$ objects in pairs of 2 with order not mattering is $(2k-1)!!$, and thus the sum of all our matchings in pairs must equal this. Note that this number is also the $2k$\textsuperscript{th} moment of the standard normal; this is the reason the singly palindromic Toeplitz have a limiting rescaled spectral measure that is normal, as each contribution contributes fully. We thus find
\begin{equation}
{\rm Cr}_{4, 4}\ = \ \left(2\cdot 2-1\right)!! - {\rm Cr}_{4, 2} - {\rm Cr}_{4, 0}\ =\ 3 - 2\ =\ 1.
\end{equation}
This completes the proof of the first formula: ${\rm Cr}_{2k, 4} = \binom{2k}{k-2}$.

The other coefficients are calculated in a similar recursive fashion -- essentially, once we have values for ${\rm Cr}_{2k, 2l}$ for $l = 0, 1, 2, \ldots, m-1$, we can find ${\rm Cr}_{2m, 2m}$ by using \eqref{eq:totalcrossingconfigs}, which allows us to write the general formulas above for ${\rm Cr}_{2k, 2m}$. We show the calculations below. We have
\begin{eqnarray}
{\rm Cr}_{6, 6}  & \ = \ & \left(6-1\right)!! - {\rm Cr}_{6, 4} - {\rm Cr}_{6, 2} - {\rm Cr}_{6, 0} \nonumber \\ & \ = \ & 5!! - \binom{6}{1} - 0 - C_{3} \ = \ 15 - 6 - 0 - 5 \ = \ 4,
\end{eqnarray}
so ${\rm Cr}_{2k, 6} = 4\binom{2k}{k-3}$,
and thus
\bea
{\rm Cr}_{8, 8} & \ = \ & \left(8-1\right)!! - {\rm Cr}_{8, 6} - {\rm Cr}_{8, 4} - {\rm Cr}_{8, 2} - {\rm Cr}_{8, 0} \nonumber \\
& \ = \ & 7!! - 4\binom{8}{1} - \binom{8}{2} - 0 - C_{4} \ = \ 105 - 32 - 28 - 14 \ = \ 31.
\eea

To finish the calculation for ${\rm Cr}_{2k, 8}$ we compute
\be
\sum_{0 < a < 4} {\rm Cr}_{2a, 2a} {\rm Cr}_{8-2a, 8-2a} \ = \ {\rm Cr}_{2, 2} {\rm Cr}_{6, 6} + {\rm Cr}_{4, 4} {\rm Cr}_{4, 4} + {\rm Cr}_{6, 6} {\rm Cr}_{2,2} \ = \ 0 + 1 + 0 \ = \ 1.
\ee
so that we get ${\rm Cr}_{2k, 8} \ = \  31\binom{2k}{k-4}+\sum_{d=1}^{k-4}\binom{2k}{k-4-d}\left(4+d\right)$.

For the formula for ${\rm Cr}_{2k, 10}$,
\bea
{\rm Cr}_{10, 10} & \ = \ & \left(10-1\right)!! - {\rm Cr}_{10, 8} - {\rm Cr}_{10, 6} - {\rm Cr}_{10, 4} - {\rm Cr}_{10, 2} - {\rm Cr}_{10, 0} \nonumber \\
& \ = \ & 9!! - \left(31\binom{10}{1} + \sum_{d = 1}^{1}\binom{10}{1-d}\left(4+d\right)\right) - 4\binom{10}{2} - \binom{10}{3}-0-C_{5} \nonumber \\
& \ = \ & 945 - (310 + 5) - 4\left(45\right) - 120 - 0 - 42 \ = \ 288,
\eea
and finally
\bea
\sum_{0 < a < 5} {\rm Cr}_{2a, 2a} {\rm Cr}_{10-2a, 10-2a} & \ = \ & {\rm Cr}_{2, 2}{\rm Cr}_{8, 8} + {\rm Cr}_{4, 4} {\rm Cr}_{6, 6} + {\rm Cr}_{6, 6} {\rm Cr}_{4, 4} + {\rm Cr}_{8, 8}{\rm Cr}_{2, 2} \nonumber\\ & \ = \ & 0 + 4 + 4 + 0 \ = \ 8,
\eea
so ${\rm Cr}_{2k, 10}  \ = \  288\binom{2k}{k-5}+8\sum_{d=1}^{k-5}\binom{2k}{k-5-d}\left(5+d\right)$.
\end{proof}

Notice that by using the formulas in Lemma \ref{lem:crossingformulas} to calculate the number of terms with each of the possible contributions given in Theorem \ref{thm:weightedcontributions}, we are able to calculate up to the $12^{\text{th}}$ moment exactly (where for the $12^{\text{th}}$ moment we use the same recursive procedure as in the proof of Lemma \ref{lem:crossingformulas} to calculate ${\rm Cr}_{12, 12}$).

\begin{rek}\label{eq:kl1coeff} The coefficients in front of the binomial coefficient of the leading term of ${\rm Cr}_{2k,2m}$ are sequence A081054 from the OEIS \cite{Kl1}.
\end{rek}


\section{Limiting Behavior of the Moments}\label{sec:limiting behavior}

As we are unable to find exact expressions for the number of pairings with exactly $2m$ crossing vertices for all $m$, we determine the expected value and variance of the number of vertices in a crossing. Such expressions, and the limiting behavior of these expressions, are useful for obtaining bounds for the moments. To find these, we make frequent use of arguments about the probabilities of certain pairings, recognizing that since all configurations are equally likely, the probability that a vertex $i$ pairs with a vertex $j$ is just $\frac{1}{2k-1}$.

\begin{thm}\label{thm:evcrossing}
The expected number of vertices involved in a crossing of $2k$ vertices paired on the circle is \be\label{eq:pcrossexact} \frac{2k}{2k-1}\left(2k-2 - \frac{{\ }_2F_1(1,3/2,5/2-k;-1)}{2k-3} - (2k-1){\ }_2F_1(1,1/2+k,3/2;-1)\right), \ee which is
\be\label{eq:pcrossaymptoticnew} 2k-2 - \frac2{k} + O\left(\frac1{k^2}\right)\ee as $k \rightarrow \infty$.
\end{thm}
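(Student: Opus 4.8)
The plan is to compute the expected number of crossing vertices by complementary counting: instead of counting crossing vertices directly, count the expected number of \emph{non-crossing} vertices, since a vertex fails to be in a crossing precisely when its pair is a non-crossing edge. I would work with a fixed vertex, say vertex $1$, and condition on which vertex $j$ it is paired with; by symmetry each of the $2k-1$ choices is equally likely, so the probability vertex $1$ is paired with a vertex at ``arc distance'' $\ell$ (i.e. with $\ell-1$ vertices strictly on one side) contributes a term weighted by $\tfrac1{2k-1}$. The edge $(1,j)$ is non-crossing if and only if the vertices it separates on each side are paired only among themselves; if there are $2a$ vertices on one side and $2(k-1-a)$ on the other, this happens for $(2a-1)!!\,(2k-2a-3)!!$ of the $(2k-3)!!$ completions. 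So the probability that a given edge with a balanced split $2a,2k-2-2a$ is non-crossing is $\binom{2k-2}{2a}^{-1}\cdot\binom{2k-2}{2a}$-type ratio — concretely $\frac{(2a)!\,(2k-2-2a)!}{(2k-2)!}\cdot\frac{(2k-2)!!}{(2a)!!(2k-2-2a)!!}$, which simplifies; summing over $a$ from $0$ to $k-1$ and multiplying by $2k$ (for the $2k$ vertices, or equivalently $k$ edges times $2$ endpoints) gives the expected number of non-crossing vertices. Subtracting from $2k$ yields \eqref{eq:pcrossexact}.

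The key computational step is recognizing the resulting sum over $a$ as a value of a Gauss hypergeometric series. After writing the per-edge non-crossing probability in terms of ratios of double factorials and converting double factorials to Gamma functions via $(2m)!! = 2^m m!$ and $(2m-1)!! = \frac{(2m)!}{2^m m!}$, the summand becomes a ratio of Pochhammer symbols in $a$ with a factor $(-1)^a$ or $(+1)^a$ (the $-1$ argument in $_2F_1$ comes from this), and the parameters $1, 3/2, 5/2-k$ and $1, 1/2+k, 3/2$ should fall out after matching $\frac{(c_1)_a (c_2)_a}{(c_3)_a a!}$. I would split the sum into the two extreme terms (the fully one-sided splits $a=0$ and $a=k-1$, which correspond to an edge adjacent to the ``outside'' and contribute the bulk $2k-2$ or similar) plus the interior sum, so that the two $_2F_1$ expressions with their prefactors $\frac1{2k-3}$ and $(2k-1)$ emerge naturally; getting the index shifts and the overall $\frac{2k}{2k-1}$ normalization exactly right is the fiddly part.

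For the asymptotic \eqref{eq:pcrossaymptoticnew}, I would use the integral or series representation of $_2F_1(1,b,c;-1)$ and extract the large-$k$ behavior of each of the two hypergeometric terms. Since the argument is $-1$ the series converges (conditionally/absolutely depending on parameters) and one can expand term by term: $_2F_1(1,1/2+k,3/2;-1) = \sum_{n\ge0} \frac{(1/2+k)_n}{(3/2)_n}(-1)^n$, whose leading behavior as $k\to\infty$ is governed by the first few $n$ and is $O(1/k)$ after multiplication by nothing, but multiplied by $(2k-1)$ gives an $O(1)$ correction; similarly $\frac{1}{2k-3}\,{}_2F_1(1,3/2,5/2-k;-1)$ is $O(1/k)$. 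Collecting the constant and $1/k$ terms, and being careful about cancellations between the ``$2k-2$'' piece and the contributions of the two $_2F_1$ terms, produces $2k-2 - \tfrac2k + O(1/k^2)$. I expect the main obstacle to be the bookkeeping in the second step — correctly identifying the hypergeometric parameters from the double-factorial sum and keeping the boundary terms straight — rather than anything conceptually deep; the asymptotic extraction is then a routine (if delicate) expansion. As a sanity check I would verify the formula against the small cases $k=2,3$ using the explicit $\mathrm{Cr}_{2k,2m}$ values from Lemma~\ref{lem:crossingformulas}, for which the expected number of crossing vertices is $\sum_m 2m\,\mathrm{Cr}_{2k,2m}/(2k-1)!!$.
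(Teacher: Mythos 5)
Your combinatorial setup is exactly the paper's: condition on the partner of vertex $1$ (each of the $2k-1$ choices equally likely), note that an odd split forces a crossing, and compute the non-crossing probability for an even split as a ratio of double factorials over $(2k-3)!!$; this reproduces \eqref{eq:expansionforpcross}, and your binomial form $\binom{k-1}{a}/\binom{2k-2}{2a}$ is equivalent to the paper's $\binom{k-2}{m-2}/\binom{2k-3}{2m-3}$. The gap is in how you propose to get from that finite sum to \eqref{eq:pcrossexact}. The two hypergeometric terms do \emph{not} emerge by ``splitting the sum into the two extreme terms plus the interior sum'': the $a=0$ and $a=k-1$ (adjacent-edge) terms are just rational numbers, and no boundary term produces a ${}_2F_1$ with parameter $1/2+k$ and prefactor $2k-1$. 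What is actually needed (and what the paper does in Appendix \ref{sec:appendixpcross}) is to set $n=k-2$, recognize the finite sum $\sum_{m=0}^{n}\binom{n}{m}/\binom{2n+1}{2m+1}$ as the \emph{partial sum} over $m\le n$ of the series attached to ${}_2F_1(1,3/2,5/2-k;-1)$, and then show, by re-indexing the tail $m>n$ and using $\Gamma$-function identities including the reflection formula, that the tail equals $-(2k-1)(2k-3)\,{}_2F_1(1,1/2+k,3/2;-1)$. That partial-sum-plus-tail identification is the missing idea; without it your ``match the Pochhammer ratios'' step will not produce the stated combination.

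Your plan for the asymptotic \eqref{eq:pcrossaymptoticnew} also has a concrete problem: you expand ${}_2F_1(1,1/2+k,3/2;-1)$ termwise and assert the series converges at $-1$, but here $c-a-b=-k$ for both hypergeometric functions, so the defining series diverge at $z=-1$ (the exact formula must be read via analytic continuation, or after a Pfaff/Euler transformation that yields a convergent representation). The paper sidesteps this entirely and gets the asymptotic elementarily from the finite sum in \eqref{eq:complicatedsumformeanpcross}: the $m=2$ and $m=k-1$ terms each equal $\frac{1}{2k-3}$, and the consecutive-term ratio $\frac{2m-1}{2k-2m-1}$ shows all interior terms are $O(1/k^2)$, giving $2k-2-\frac{2}{k}+O(1/k^2)$ directly. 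So either adopt that elementary bound for the asymptotics, or first pass to a convergent representation of the ${}_2F_1$'s before expanding in $k$; as written, the termwise expansion step would fail. Your sanity check against the ${\rm Cr}_{2k,2m}$ values for small $k$ is a good addition, but it does not repair these two steps.
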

\begin{proof} In our main applications (such as computing the asymptotic behavior of the mean and the variance), we only need the asymptotic expression \eqref{eq:pcrossaymptoticnew}, which we prove elementarily below. We give the proof of \eqref{eq:pcrossexact} in Appendix \ref{sec:appendixpcross}, which involves converting the expansions below to differences of hypergeometric series.

For a given pairing of $2k$ vertices, let $X_{i} = 1$ if vertex $i$ is involved in a crossing and $0$ otherwise. Then $Y_{2k} = \sum_{i=1}^{2k}X_{i}$ is the number of vertices involved in a crossing in this pairing. By linearity of expectation, \be
\E\left(Y_{2k}\right) \ = \ \E\left(\sum_{i=1}^{2k}X_{i}\right)  \ = \ 2k\E\left(X_{i}\right) \ = \ 2 k p_{\rm cross},
\ee
where $p_{\rm cross}$ is the probability that a given vertex is in a crossing as, by symmetry, this is the same for all vertices. Thus, without loss of generality, we may think of $p_{\rm cross}$ as the probability that vertex $1$ is in a crossing. We notice that

\begin{enumerate}

\item If vertex 1 is matched with another odd indexed vertex, which happens with probability $\frac{k-1}{2k-1}$, then it must be involved in a crossing, since there are an odd number of vertices in the two regions created by the matching, meaning that the regions cannot only pair by themselves.

\item If vertex 1 is matched with an even indexed vertex, then it is involved in a crossing if and only if it does not partition the remaining vertices into two parts that pair exclusively with themselves. Suppose it is matched with vertex $2m$ (which happens with probability $\frac{1}{2k-1}$). Then its edge divides the vertices into a region of $2m-2$ and a region of $2k-2m$ vertices. As the number of ways to match $2\ell$ objects in pairs with order immaterial is $(2\ell-1)!!$ $=$ $(2\ell-1)$ $(2\ell-3)$ $\cdots$ $3 \cdot 1$, the probability that each region pairs only with itself is
\begin{equation}
\frac{\left(2m-3\right)!!\left(2k-2m-1\right)!!}{\left(2k-3\right)!!}.
\end{equation}

\end{enumerate}

Thus, the probability that vertex 1 is involved in a crossing is
\bea\label{eq:expansionforpcross}
p_{\rm cross} & \ = \ & \frac{k-1}{2k-1}+\sum_{m=2}^{k-1}\frac{1}{2k-1}\left(1-\frac{\left(2m-3\right)!!\left(2k-2m-1\right)!!}{\left(2k-3\right)!!}\right) \nonumber\\
& \ = \ & \frac{2k-3}{2k-1}- \frac{1}{2k-1}\sum_{m=2}^{k-1}\frac{\left(2m-3\right)!!\left(2k-2m-1\right)!!}{\left(2k-3\right)!!} \nonumber\\
& \ = \ & \frac{2k-3}{2k-1} - \frac{1}{2k-1}\sum_{m=2}^{k-1}\frac{\left(2m-3\right)! \left(2k-2m\right)! \left(2k-4\right)!!}{\left(2m-4\right)!!\left(2k-2m\right)!!\left(2k-3\right)!} \nonumber\\
& \ = \ & \frac{2k-3}{2k-1} -  \frac{1}{2k-1}\sum_{m=2}^{k-1}\frac{\left(2m-3\right)!\left(2k-2m\right)!2^{k-2}\left(k-2\right)!}{2^{m-2}\left(m-2\right)!2^{k-m}\left(k-m\right)!\left(2k-3\right)!} \nonumber\\
& \ = \ & \frac{2k-3}{2k-1} - \frac{1}{2k-1}\sum_{m=2}^{k-1} \frac{\binom{k-2}{m-2}}{\binom{2k-3}{2m-3}}.
\eea
Therefore
\be\label{eq:complicatedsumformeanpcross}
\E\left(Y_{2k}\right) \ = \ 2kp_{\rm cross} \ = \ \left(2k\right)\frac{2k-3}{2k-1}-\left(2k\right)\frac{1}{2k-1}\sum_{m=2}^{k-1} \frac{\binom{k-2}{m-2}}{\binom{2k-3}{2m-3}}.
\ee
In the above sum, the first and last terms are both $\frac1{2k-3}$, as for $m=2$ we have \begin{equation}
\frac{\binom{k-2}{0}}{\binom{2k-3}{1}}\ =\ \frac{1}{2k-3},
\end{equation} and for $m=k-1$ we have
\begin{equation}
\frac{\binom{k-2}{k-3}}{\binom{2k-3}{2k-5}}\ =\ \frac{\binom{k-2}{1}}{\binom{2k-3}{2}}\ = \ \frac{2\left(k-2\right)}{\left(2k-3\right)\left(2k-4\right)}\ =\ \frac{1}{2k-3}.
\end{equation} Looking at the ratio of subsequent terms, straightforward algebra shows \be \frac{\ncr{k-2}{m-1}/\ncr{2k-3}{2m-1}}{\ncr{k-2}{m-2}/\ncr{2k-3}{2m-3}} \ = \ \frac{2m-1}{2k-2m-1}. \ee Thus for $m$ up to the halfway point, each term in the sum is less than the previous. In particular, the $m=3$ term is $5/(2k-7)$ times the $m=2$ term, and hence all of these terms are $O(1/k^2)$. Similarly, working from $m=k-2$ to the middle we find all of these terms are also $O(1/k^2)$, and thus the sum in \eqref{eq:complicatedsumformeanpcross} can be rewritten, giving
\bea\label{eq:finalmeanY2k}
\E\left(Y_{2k}\right) & \ = \ & \left(2k\right)\frac{2k-3}{2k-1}-\left(2k\right)\frac{1}{2k-1}\left(\frac{2}{2k-3} + O\left(\frac{1}{k^{2}}\right)\right) \nonumber\\
& \ = \ & 2k - 2 - \frac{2}{k} + O\left(\frac{1}{k^{2}}\right).
\eea
\end{proof}



\begin{thm}\label{thm:varcrossing}
The variance of the number of vertices involved in a crossing approaches $4$ as $k \rightarrow \infty$.
\end{thm}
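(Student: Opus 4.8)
The plan is to work with the complementary count. A vertex lies in a crossing exactly when the pair containing it is a crossing pair, so crossing vertices come in pairs: writing $D_{2k}$ for the number of non-crossing pairs of a uniformly chosen pairing of the $2k$ vertices, we have $Y_{2k} = 2k - 2D_{2k}$, hence $\operatorname{Var}(Y_{2k}) = 4\operatorname{Var}(D_{2k})$. Now $\operatorname{Var}(D_{2k}) = \E\!\big(D_{2k}(D_{2k}-1)\big) + \E(D_{2k}) - \E(D_{2k})^2$, and Theorem~\ref{thm:evcrossing} gives $\E(D_{2k}) = k - \tfrac12\E(Y_{2k}) = 1 + \tfrac1k + O(k^{-2}) \to 1$, so $\E(D_{2k}) - \E(D_{2k})^2 \to 0$. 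Hence it suffices to prove that the expected number of ordered pairs of distinct non-crossing pairs tends to $1$: $\E\!\big(D_{2k}(D_{2k}-1)\big) \to 1$.

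I would compute this by linearity over ordered pairs $(e_1,e_2)$ of distinct edges of the random pairing. Since $e_1$ is non-crossing, no edge crosses it, so $e_2$ lies inside one of the two arcs cut off by $e_1$; call that arc $A_1$ (with $2\alpha$ vertices) and the other $A_2$ (with $2\beta = 2(k-1-\alpha)$ vertices, forcing $\alpha\ge 1$). As $A_1$ is then self-contained, $e_1,e_2$ are both non-crossing precisely when $A_2$ is matched internally and $e_2$ is a non-crossing edge of the induced matching of $A_1$. There are $2k$ cyclic placements of $(e_1,A_1)$ with $|A_1| = 2\alpha$, and for each the number of completions is $\big(\sum_{M_1}\#\{\text{non-crossing edges of }M_1\}\big)(2\beta-1)!! = (2\alpha-1)!!\,\E(D_{2\alpha})\,(2\beta-1)!!$, the sum running over internal matchings $M_1$ of $A_1$. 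This yields the closed form
\[
\E\!\big(D_{2k}(D_{2k}-1)\big) \;=\; \frac{2k}{(2k-1)!!}\sum_{\alpha=1}^{k-1}(2\alpha-1)!!\,(2k-2\alpha-3)!!\,\E(D_{2\alpha})
\]
(convention $(-1)!! = 1$); unwinding the inner factor once more gives the fully explicit variant $\frac{2k}{(2k-1)!!}\sum_{\gamma+\beta+\lambda=k-2}(2\gamma-1)!!\,(2\beta-1)!!\,(2\lambda+1)!!$, and both forms can be verified by hand against the sixth-, eighth-, and tenth-moment data implied by Lemma~\ref{lem:crossingformulas}.

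Finally I would read off the limit. The $\alpha=k-1$ term of the displayed sum equals $(2k-3)!!\cdot1\cdot\E(D_{2k-2})$ and contributes $\frac{2k}{2k-1}\E(D_{2k-2})\to 1$ by Theorem~\ref{thm:evcrossing}. For the remaining terms ($1\le\alpha\le k-2$) I would use the uniform bound $\E(D_{2\alpha})\le 3$, which follows from~\eqref{eq:expansionforpcross} applied to $2\alpha$ vertices (its displayed sum being nonnegative and at most $1$, so $2\E(D_{2\alpha}) = 2\alpha(1-p_{\rm cross}) \le 6$), together with the identity $\frac{(2\alpha-1)!!\,(2k-2\alpha-3)!!}{(2k-1)!!} = \frac{\binom{k}{\beta}}{(2k-2\beta-1)\binom{2k}{2\beta}}$ (with $\beta = k-1-\alpha$) and the bound $\binom{2k}{2\beta}\ge\binom{k}{\beta}^2$. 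These reduce the tail to $6k\sum_{\beta=1}^{k-2}\frac{\binom{k}{\beta}}{(2k-2\beta-1)\binom{2k}{2\beta}}$, which, splitting at $\beta = k/2$ (and reindexing $\beta\mapsto k-\beta$ on the second half), one bounds by $O(1/k)$ using $\sum_{\beta\ge1}1/\binom{k}{\beta} = O(1/k)$ and $\sum_{\beta\ge2}1/\binom{k}{\beta} = O(1/k^2)$. Hence $\E\!\big(D_{2k}(D_{2k}-1)\big)\to 1$ and $\operatorname{Var}(Y_{2k})\to 4$.

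The step I expect to be the main obstacle is this last tail estimate: making the $O(1)$ bound on $\E(D_{2\alpha})$ genuinely uniform in $\alpha$, and then showing that the entire sum — not merely each summand — vanishes in the limit. As with Theorem~\ref{thm:evcrossing}, these estimates are elementary but delicate; alternatively one can sidestep them by expressing $\E\!\big(D_{2k}(D_{2k}-1)\big)$ through ${}_2F_1$'s and invoking the hypergeometric machinery of Appendix~\ref{sec:appendixpcross}.
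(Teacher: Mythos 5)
Your argument is correct, and it takes a genuinely different route from the paper. The paper computes $\E(Y_{2k}^2)=\sum_{i,j}\E(X_iX_j)$ directly over pairs of vertices: it evaluates exactly the probability $p_a=\tfrac13$ that two given edges cross each other, estimates the conditional probability $p_b=1-\tfrac3k-\tfrac3{2k^2}+O(k^{-3})$ that two non-crossing-each-other edges are each crossed by something else, and then subtracts $\E(Y_{2k})^2$ from \eqref{eq:finalmeanY2k}. You instead pass to the complementary statistic $D_{2k}$ (the number of non-crossing edges), use $Y_{2k}=2k-2D_{2k}$, and reduce everything to the single claim $\E\bigl(D_{2k}(D_{2k}-1)\bigr)\to 1$, which you establish through an exact Catalan-style arc decomposition: conditioning on a non-crossing edge $e_1$ forces the two arcs it cuts off to be matched internally, giving the closed form $\E\bigl(D_{2k}(D_{2k}-1)\bigr)=\frac{2k}{(2k-1)!!}\sum_{\alpha=1}^{k-1}(2\alpha-1)!!\,(2k-2\alpha-3)!!\,\E(D_{2\alpha})$; I checked this (and your fully unwound triple-sum variant) against the $k=2,3$ data and both are right, as is the double-factorial/binomial identity and the Vandermonde bound $\binom{2k}{2\beta}\ge\binom{k}{\beta}^2$ you use for the tail. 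The $\alpha=k-1$ term gives the limit $1$ via Theorem \ref{thm:evcrossing}, and your tail bound (with the uniform $\E(D_{2\alpha})\le 3$ from \eqref{eq:expansionforpcross}) is sound, though stated a bit loosely: the sums $\sum_{\beta\ge1}1/\binom{k}{\beta}=O(1/k)$ and $\sum_{\beta\ge2}1/\binom{k}{\beta}=O(1/k^2)$ must be read as restricted to $\beta\le k/2$, which is exactly what your split at $k/2$ and reindexing provide. What the two approaches buy: the paper's indicator computation yields the variance asymptotic with an explicit $O(1/k)$ error and no need for a recursion; yours produces an exact combinatorial identity for the second factorial moment of the number of non-crossing edges (and, together with $\E(D_{2k})\to1$, suggests Poisson-type limiting behavior for $D_{2k}$), and it would extend naturally to higher factorial moments, which is relevant to the open question raised in the paper's concluding remarks.
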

\begin{proof}
We need to calculate ${\rm Var}\left(Y_{2k}\right) = \E\left(Y_{2k}^{2}\right)- \E\left(Y_{2k}\right)^{2}$. As we know the second term by Theorem \ref{thm:evcrossing}, we concentrate on the first term:
\be
\mathbb{E}\left(Y_{2k}^{2}\right) \ = \ \sum_{i, j \in \{1, \dots, 2k\}}\mathbb{E}\left(X_{i}X_{j}\right).
\ee
The above sum has $4k^{2}$ terms.

For $2k$ of those terms, $i = j$ so $\mathbb{E}\left(X_{i}X_{j}\right) = \mathbb{E}\left(X_{i}^{2}\right) = \mathbb{E}\left(X_{i}\right) = p_{\rm cross}$ as the $X_\ell$'s are binary indicator variables with probability of success $p_{\rm cross}$. For another $2k$ terms, we have $i$ and $j$ are paired on the same edge, so $\mathbb{E}\left(X_{i}X_{j}\right) = \mathbb{E}\left(X_{i}\right) = p_{\rm cross}$ as before.

For the remaining $4k^{2}-4k$ terms, $i$ and $j$ are on different edges, and we must find the probability that both those edges are in crossings. We separate this probability into two disjoint probabilities, the probability $p_{a}$ that they cross each other, and the probability that they don't cross each other but are each crossed by at least one other pairing. We denote this second probability by $\left(1-p_{a}\right)p_{b}$, where $p_{b}$ is the conditional probability they are each crossing given that they don't cross each other. We will find these probabilities by taking sums over the placements of $k, m, p, q$ above as appropriate and calculating for each the probability of observing one of our desired configurations. We have shown
\be\label{eq:varwithpabc}
\mathbb{E}\left(Y_{2k}^{2}\right)  \ = \  4kp_{\rm cross} + \left(4k^{2} - 4k\right) \left(p_{a}+\left(1-p_{a}\right)p_{b}\right),
\ee thus reducing the problem to the determination of $p_{a}$ and $p_{b}$.

Without loss of generality, we label our edges as $\left\{1, m\right\}$ and $\left\{p, q\right\}$. They cross each other if and only if one of $\left\lbrace p, q \right\rbrace$ is one of the $m-2$ vertices between $1$ and $m$, and the other is one of the $2k-m$ vertices between $m$ and $2k$. Thus
\bea
p_{a} & \ = \ & \sum_{m = 2}^{2k} \frac{1}{2k-1} \cdot 2 \cdot \frac{m-2}{2k-2} \cdot \frac{2k-m}{2k-3} \nonumber\\
& \ = \ & \frac{2}{\left(2k-1\right)\left(2k-2\right)\left(2k-3\right)} \left[\sum_{m=2}^{2k}-4k - \sum_{m=2}^{2k}m^{2} + \left(2k+2\right)\sum_{m=2}^{2k}m\right].
\eea

By using the formulas for the sum of the first $n$ integers and the first $n$ squares, we simplify the second factor to
\be
\left(2k-1\right)\left(-4k\right)-\left(\frac{2k\left(2k+1\right)\left(4k+1\right)}{6}-1\right)+ \left(2k+2\right)\left(\frac{2k\left(2k+1\right)}{2}-1\right),
\ee
which gives
\bea
p_{a}
& \ = \ & \frac{2}{\left(2k-1\right)\left(2k-2\right)\left(2k-3\right)} \frac{\left(2k-1\right)\left(2k-2\right)\left(2k-3\right)}{6} \ = \ \frac13.
\eea

We now calculate $p_{b}$, the probability that $\left\{1, m\right\}$ and $\left\{p, q\right\}$ are both involved in crossings given they don't cross each other. We must place $\left\{1, m\right\}, \left\{p, q\right\}$. Relabeling if necessary, we may assume $1 < m < p < q$; such a labeling is possible if and only if $\left\{1, m\right\}$ and $\left\{p, q\right\}$ do not cross each other. We compute the complement of our desired probability by finding the number of configurations where one or less of $\left\{1, m\right\}$ and $\left\{p, q\right\}$ is in a crossing. We denote the number of such configurations by $N_{k, m, p, q}$ and can thus write

\begin{equation}\label{eq:pbsumwithN}
p_{b} \ = \ 1 - \sum_{m=2}^{2k-2}\sum_{p=m+1}^{2k-1}\sum_{q = p+1}^{2k} \frac{N_{k, m, p,q}}{\left(2k-5\right)!!}.
\end{equation}

Since there are $\binom{2k-1}{3}$ terms in the above sum (corresponding to the $\binom{2k-1}{3}$ possible choices of $m, p, q$ since we have specified the location of vertex $1$ and the order of $m, p, q$), we can rewrite \eqref{eq:pbsumwithN} as
\begin{equation}
p_{b} \ = \ 1 - \frac{\sum_{m=2}^{2k-2}\sum_{p=m+1}^{2k-1}\sum_{q = p+1}^{2k} N_{k, m, p,q}}{\binom{2k-1}{3}\left(2k-5\right)!!}.
\end{equation}

All that remains to be done is to evaluate the sum in the above expression. To do so, we first define the following function $P\left(k\right)$, which counts the number of ways $k$ vertices can be paired with each other:
\begin{equation}
\threecase{P(x)\ =\ }{0}{if $k$ is odd}{1}{if $k=0$}{(k-1)!!}{otherwise.}
\end{equation}

Next we think of these two edges as dividing the remaining vertices into three regions: those between $\left\{1, m\right\}$ and $\left\{p, q\right\}$, of which there are $M = p-m-1+2k-q$, those on the side of $\left\{1, m\right\}$, of which there are $L = m-2$, and those on the side of $\left\{p, q\right\}$, of which there are $R = q-p-1$. We know that $\left\{1, m\right\}$ will not be crossed if the $L$ vertices between $1$ and $m$ pair exclusively with each other. Likewise, $\left\{p, q\right\}$ will not be crossed if the vertices between $p$ and $q$ pair exclusively with each other. Our desired quantity is thus the union of these two events less their intersection:
\begin{equation}
P\left(L+M\right)P\left(R\right)+P\left(R+M\right)P\left(L\right)-P\left(L\right)P\left(M\right)P\left(R\right).
\end{equation}

Notice that if $L$ or $R$ is $0$, one of $\left\{1, m\right\}, \left\{p, q\right\}$ is an adjacent edge, and therefore is not crossing. Thus
\begin{equation}\label{eq:Nkmpq}
\twocase{N_{k, m, p, q} \ = \ }{\left(2k-5\right)!!}{if $L$ or $R$ is 0}{P\left(L+M\right)P\left(R\right)+P\left(R+M\right)P\left(L\right)- P\left(L\right)P\left(M\right)P\left(R\right)}{otherwise.}
\end{equation}

We now investigate the limiting behavior of $p_{b}$ (given in \eqref{eq:pbsumwithN}) by using the cases in \eqref{eq:Nkmpq}.

\begin{itemize}

\item For the first case, we have $L$ or $R$ is zero, and thus $N_{k, m, p, q} = (2k-5)!!$. We are reduced to counting the number of terms with $L$ or $R$ zero. Note that $L = 0$ when $m = 2$, and $R = 0$ when $q = p + 1$. Each of these events happens in $\binom{2k-2}{2}$ pairings (we have fixed either $m$ or $q$, and the other $2$ vertices are chosen from the remaining $2k-2$ vertices), and their intersection is $\binom{2k-3}{1}$ ($p$ is the only free index) pairings. In the limit, this case contributes
\be
\frac{\left(2\binom{2k-2}{2}-\binom{2k-3}{1}\right)\left(2k-5\right)!!}{\binom{2k-1}{3}\left(2k-5\right)!!} \ = \ \frac{3}{k} + O\left(\frac{1}{k^{3}}\right).
\ee

\item For the second case, $L$ and $R$ are non-zero. We first evaluate the contribution of the first two terms (notice that they will contribute the same in the sum  since you can simply relabel $\left\{1, m\right\}$ and $\left\{p, q\right\}$) and then the third term, recalling that we only have to look for terms that are at least $O\left(\frac{1}{k^{2}}\right)$ since we can see in \eqref{eq:varwithpabc} that any other terms will not contribute in the limit as $k \rightarrow \infty$.

\begin{itemize}
\item For $P\left(L+M\right)P\left(R\right)$, the largest terms are from when either $L+M = 2$, or when $R = 2$. In these cases, $N_{k, m, p, q} = \left(2k-7\right)!!$. If $R = 2$ then $q = p + 3$ and $m, p$ are free so there are $\binom{2k-4}{2}$ such terms corresponding to the $\binom{2k-4}{2}$ choices of $m$ and $p$. If $L+M = 2$ and $L \neq 0$ then there are only two possible terms: either $L = 1, M = 1, R = 2k-6$ or $L = 2, M = 0, R = 2k-6$. Including the symmetric terms for $P\left(R+M\right)P\left(L\right)$, these terms thus have a combined contribution of
\be
\frac{2\left(\binom{2k-4}{2}+2\right)\left(2k-7\right)!!}{\binom{2k-1}{3}\left(2k-5\right)!!} \ = \ \frac{3}{2k^{2}} + O\left(\frac{1}{k^{3}}\right).
\ee

\item For the third term, $-P\left(L\right)P\left(M\right)P\left(R\right)$, the largest contributions are when two regions combine for exactly $2$ vertices, which gives a contribution of $\left(2k-7\right)!!$. If we disregard the requirement that $L$ and $R$ are nonzero in order to obtain an upper bound on the magnitude of this contribution, there are $3$ possible terms. The next largest contribution will be when two regions combine for exactly $4$ vertices which gives a contribution of $\left(2k-9\right)!!$. Proceeding with these diagonal terms, we know that the third term contributes at most in magnitude
\be \ \ \ \ \ \ \ \ \  \ \ \ \ \ \ \ \ \ \ \ \ \ \ \  \ \ \ \ \ \ \ \ \ \ \ \ \
3\frac{\left(2k-7\right)!!}{\binom{2k-1}{3}\left(2k-5\right)!!}\ + \ 6\frac{\left(2k-9\right)!!}{\binom{2k-1}{3}\left(2k-5\right)!!}\ +\ 9\frac{\left(2k-11\right)!!}{\binom{2k-1}{3}\left(2k-5\right)!!}\ +\ \cdots
\ =\ O\left(\frac{1}{k^{3}}\right),
\ee
so they do not contribute to the main term in the limit.

\end{itemize}
\end{itemize}

Thus we have that, as $k \rightarrow \infty$,
\be
p_{b} \ = \ 1 - \frac{3}{k} - \frac{3}{2k^{2}} + O\left(\frac{1}{k^{3}}\right).
\ee
Therefore if we substitute for $p_{a}$ and $p_{b}$ in \eqref{eq:varwithpabc} we find
\bea
\E\left(Y_{2k}^{2}\right) & \ = \ & 4k-4 + \left(4k^{2} - 4k\right) \left(\frac{1}{3}+\frac{2}{3}\left(1-\frac{3}{k} - \frac{3}{2k^{2}}\right)\right) \\
& \ = \ & 4k^{2}-8k + O\left(\frac{1}{k}\right).
\eea

Using \eqref{eq:finalmeanY2k}, we also have that
\be
\E\left(Y_{2k}\right)^{2} \ = \ \left(2k - 2 - \frac{2}{k} + O\left(\frac{1}{k^{2}}\right)\right)^{2} \ = \ 4k^{2} - 8k - 4 + O\left(\frac{1}{k}\right).
\ee
The variance is $\E\left(Y_{2k}^{2}\right)-\E\left(Y_{2k}\right)^{2}$, which is $4 + O(1/k)$ as $k \rightarrow \infty$.
\end{proof}

Figure \ref{fig:meanvarsims} provides a numerical verification of the above formulas for the expected values and variances.

\begin{center}
\begin{figure}
\includegraphics[height=30mm]{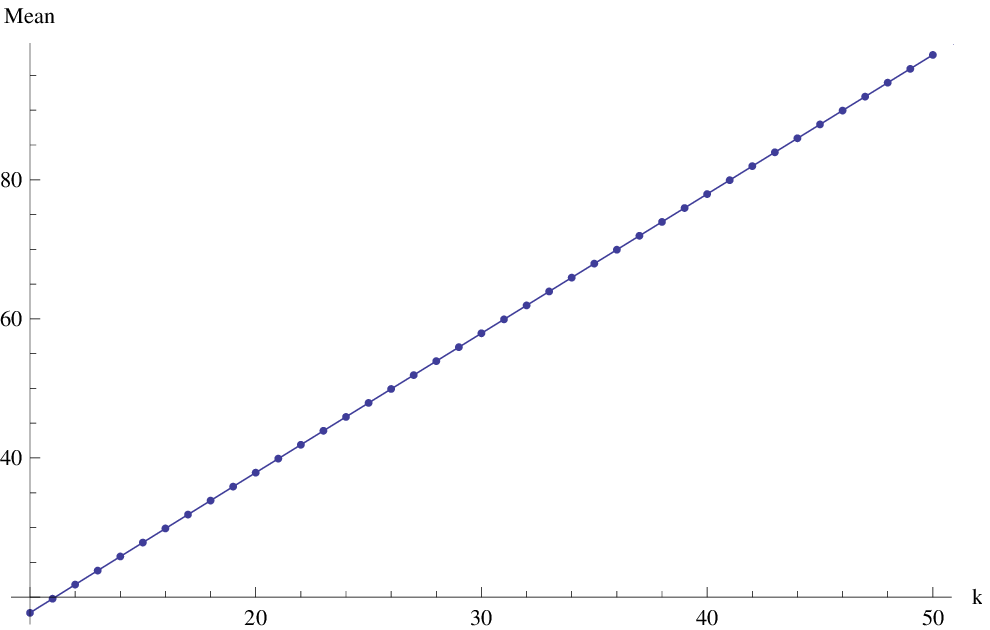}\ \ \includegraphics[height=30mm]{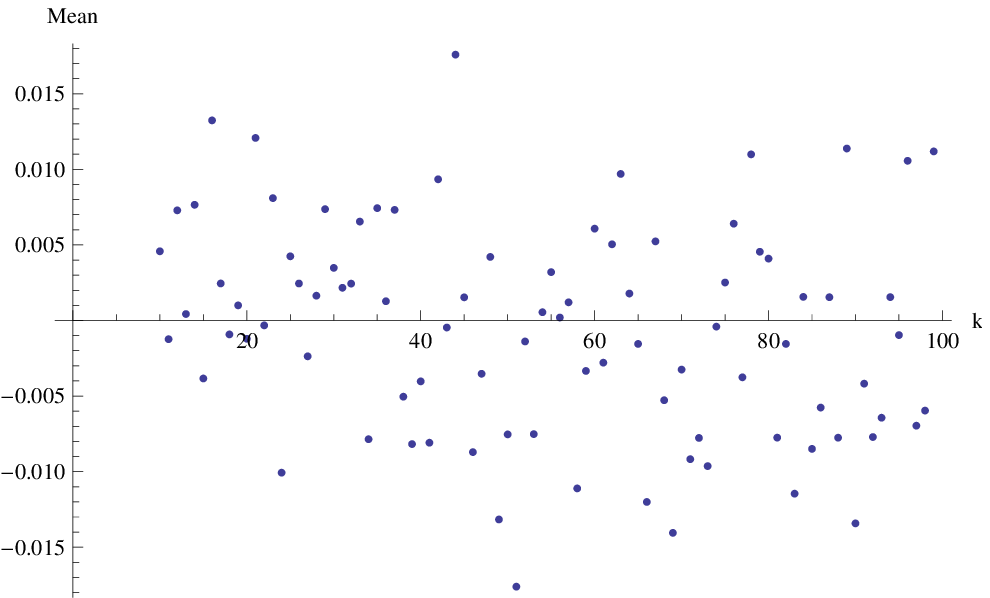}\ \
\includegraphics[height=30mm]{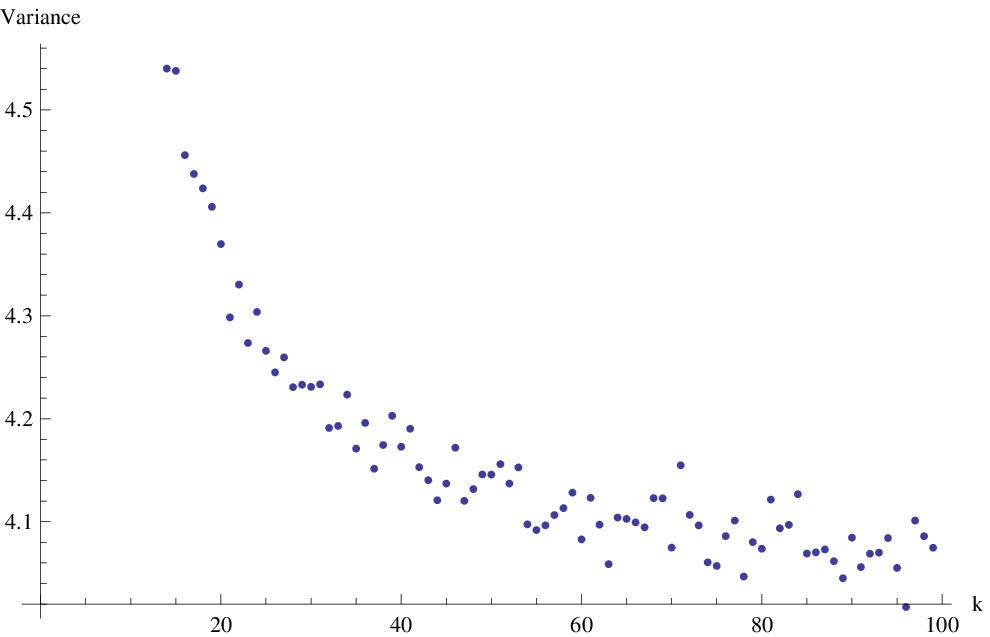}
\caption{\label{fig:meanvarsims} Numerical confirmation of formulas for the expected value and variance of vertices involved in crossing. The first plot is the expected value for $2k$ vertices (solid line is theory) versus $k$, the second plot is a plot of the deviations from theory, and the third plot is the observed variance; all plots are from 100,000 randomly chosen matchings of $2k$ vertices in pairs.}
\end{figure}
\end{center}


\section{Limiting Spectral Measure}\label{sec:limitingspectralmeasure}

We now complete the proof of Theorem \ref{thm:existencelimspecmeas} by showing convergence and determining the support.

\begin{proof}[Proof of Theorem \ref{thm:existencelimspecmeas}] The proof of the claimed convergence is standard, and follows immediately from similar arguments in \cite{HM,MMS,JMP,KKMSX}. Those arguments rely only on degree of freedom counting arguments, and are thus applicable here as well. We are left with determining the limiting rescaled spectral measures.

\begin{itemize}

\item $p = 1/2$: If $p = 1/2$, we know from \eqref{thm:weightedcontributions} that only those configurations with no crossings contribute. In particular, we may apply this to the $\pm 1$ real symmetric weight matrix. Moreover, in the crossing configurations, it is simple to check that in $(N - n\cdot o(N))^n \approx N^n - o(N^n)$ of the $N^n$ terms for the $n$th moment computation each random variable from the coefficients of the matrix ensemble occurs exactly twice. Since the moments of the original distribution are finite, the remaining $o(N^n)$ terms do not contribute. Thus, we may assume that each random variable occurs exactly twice in each term (and the variables are otherwise independent.) The claim follows directly from recalling that the number of non-crossing configurations are simply the Catalan numbers (see for example \cite{Fo}), which are also the moments of the semi-circle distribution.\\

\item $p > 1/2$: We consider the case when $\widetilde{\mu}$ (the limiting rescaled spectral measure) has unbounded support; the case of bounded support is similar. To show that the limiting signed rescaled spectral measure has unbounded support it suffices to show that the moments of our distribution grow faster than any exponential bound, i.e., that for all $B$ there exists some $k$ such that $M_{2k} > B^{2k}$. Assume the moments of the unsigned ensemble grow faster than exponentially. We prove that our distribution similarly has unbounded support using this fact and by considering the ``worst-case'' scenario allowed for under Theorem \ref{thm:weightedcontributions}. Namely, we suppose that each term contributes $x(c) \left(2p-1\right)^{2k}$, which gives us the smallest moment possible. In this case, $M_{2k}$ is decreased from the unsigned case by a factor of $(2p-1)^{2k}$, and thus the growth is still faster than any exponential bound.

\end{itemize}

\end{proof}


\section{Concluding Remarks and Future Work}

In our analysis of the limiting rescaled spectral measure of signed structured ensembles, it was crucial each random variable occurs $o(N)$ times in each row of matrices in the ensemble and that the original structured ensemble have its empirical rescaled spectral measures converge to a limiting measure; we plan to revisit cases where these assumptions fail (such as the examples in Remark \ref{rek:blocks}) in a sequel paper. The key to our analysis is Remark \ref{rek:keyforcontributions}. The moments of the signed ensemble are depressed by a factor which depends on the structure of the matrices. If $p = 1/2$ then there is no contribution from the configurations with crossings (as their contribution is at most $(2p-1)^2$ times its unweighted value). This leaves the contribution from the non-crossing configurations. These are governed by the Catalan numbers; as everything is matched in pairs without crossing, each of these configurations gives 1 and we regain the semi-circle. The computations become more involved and more dependent on the structure for $p \in (1/2, 1]$, as the structure of the matrix can force repeated indices in the product of the weights, which of course affects its expected value and contribution.

In addition to obtaining limiting measures for signed structured ensembles, we isolate some combinatorial results which are related to issues in knot theory (such as Theorem \ref{thm:momentformulas}). We also obtain asymptotics for the number of pairings of $2k$ vertices with exactly $2m$ crossing vertices. While we can derive a closed form expression for the expected number (Theorem \ref{thm:evcrossing}), the formula for the variance is more involved and we content ourselves here with determining its asymptotic, and a natural future project is to see if explicit formulas for the higher moments of the number of pairings with a given number of crossing vertices exist (or, even better, to see if a nice distribution governs the behavior as $k$ and $m$ tend to infinity).

\appendix


\section{Exact formula for mean number of crossings}\label{sec:appendixpcross}

To prove \eqref{eq:pcrossexact}, it suffices to simplify the sum in the expansion of $p_{\rm cross}$ in \eqref{eq:expansionforpcross}. We first extend the $m$ sum to include $m=k$; this adds 1 to the sum which must then be subtracted from the term outside. For notational convenience, set $n = k-2$. We re-index and let $m$ run from $0$ to $n$, and are thus reduced to analyzing \be S(n) \ = \ \sum_{m=0}^n \frac{\ncr{n}{m}}{\ncr{2n+1}{2m+1}}.\ee The following notation and properties are standard (see for example \cite{GR}). The Pochhammer symbol $(x)_m$ is defined for $m \ge 0$ by \be(x)_m \ = \ \frac{\Gamma(x+m)}{\Gamma(x)} \ = \ x(x+1) \cdots (x+m-1),\ee and the hypergeometric function ${\ }_2F_1$ by \be {\ }_2F_1(a,b,c;z) \ = \ \sum_{m=0}^\infty \frac{(a)_m(b)_m}{(c)_m} \frac{z^m}{m!}, \ee which converges for all $|z| < 1$ so long as $c$ is not a negative integer.

For ease of exposition, we work backwards from the answer.\footnote{Mathematica is able to evaluate such sums and suggest the correct hypergeometric combinations. One has to be a little careful, though, as Mathematica incorrectly evaluated $S(n)$, erroneously stating that there was zero contribution if we extend the sum to all $m$. In other words, it thought $S(n) = T_1(n) = T_1(n)+T_2(n)$ in the notation introduced below.} Using $\Gamma(1+z) = z\Gamma(z)$ and $\Gamma(1+\ell) = \ell!$ (for integral $\ell$), we find  \bea {\ }_2F_1(1,3/2,1/2-n,-1) & \ = \ & \sum_{m=0}^\infty\frac{(1)_m (3/2)_m}{(1/2-n)_m} \frac{(-1)^m}{m!} \nonumber\\ &=& \sum_{m=0}^\infty \frac{\Gamma(1+m)}{\Gamma(1)} \frac{\Gamma(3/2+m)}{\Gamma(3/2)} \frac{\Gamma(1/2-n)}{\Gamma(1/2-n+m)} \frac{(-1)^m}{m!} \nonumber\\ &=: & T_1(n) + T_2(n), \eea where $T_1(n)$ is the sum over $m\le n$ and $T_2(n)$ is the sum over $m > n$. From the functional equation of the Gamma function and using $\ell!! = \ell(\ell-2)(\ell-4)\cdots$ down to 2 or 1, we find \bea \Gamma(3/2+m) & \ = \ & 2^m (2m+1)!! \Gamma(3/2) \nonumber\\ \Gamma(1/2-n+m) &=& (-1)^m 2^m (2n-1)(2n-3)\cdots(2n-2m+1) \Gamma(1/2-n). \eea Substituting, we find \bea T_1(n) & \ = \ & \sum_{m=0}^n \frac{(2m+1)!! (2n-2m-1)!!}{(2n-1)!!} \nonumber\\  & \ = \ & \sum_{m=0}^n \frac{(2m+1)! (2n-2m-1)!}{(2n-1)! 2n} \frac{2n(2n-2)!!}{(2m)!!}{(2n-2m-2)!!}
\nonumber\\  & \ = \ & \sum_{m=0}^n \frac{(2m+1)!(2n-2m)!}{(2n+1)!} \cdot (2n+1) \cdot \frac{2^n n!}{(2n-2m) 2^{n-1} m!(n-m-1)!} \nonumber\\ &=& (2n+1) \sum_{m=0}^n \frac{\ncr{n}{m}}{\ncr{2n+1}{2m+1}}; \eea note this is our desired sum. Thus \be  \sum_{m=0}^n \frac{\ncr{n}{m}}{\ncr{2n+1}{2m+1}} \ = \ \frac{{\ }_2F_1(1,3/2,1/2-n,-1) - T_2(n)}{2n+1}, \ee and the proof is completed by analyzing $T_2(n)$. To determine this term's contribution, we re-index. Writing $m = n+1+u$, we find \bea T_2(n) & \ = \ & \sum_{u=0}^\infty \frac{\Gamma(1+n+1+u)}{\Gamma(1)} \frac{\Gamma(3/2+n+1+u)}{\Gamma(3/2)} \frac{\Gamma(1/2-n)}{\Gamma(1/2-n+n+1+u)} \frac{(-1)^{n+1+u}}{(n+1+u)!} \frac{u!}{u!} \nonumber\\ &=& \sum_{u=0}^\infty \frac{\Gamma(1+u)}{\Gamma(1)} \frac{\Gamma(5/2+n+u)}{\Gamma(3/2)} \frac{\Gamma(1/2-n)}{\Gamma(3/2+u)} \frac{(-1)^{n+1} (-1)^u}{u!} \nonumber\\ &=& \frac{(-1)^{n+1} \Gamma(1/2-n)\Gamma(5/2+n)}{\Gamma(3/2)^2} \sum_{u=0}^\infty \frac{\Gamma(1+u)}{\Gamma(1)} \frac{\Gamma(5/2+n+u)}{\Gamma(5/2+n)} \frac{\Gamma(3/2)}{\Gamma(3/2+u)} \frac{(-1)^u}{u!} \nonumber\\ &=& -(2n+3)(2n+1){\ }_2F_1(1,1/2+k,3/2,-1), \eea where we used $\Gamma(1-z)\Gamma(z) = \pi/\sin(\pi z)$ with $z= n + \frac12$ to simplify the Gamma factors depending only on $n$. Combining the above proves \eqref{eq:pcrossexact}.


\ \\

\end{document}